\title{Existence and global behaviour of solutions of a parabolic problem involving the fractional $p$-Laplacian in porous medium}
\author{Loïc Constantin\footnote{\href{loic.constantin@univ-pau.fr}{loic.constantin@univ-pau.fr}}, Jacques Giacomoni\footnote{\href{jacques.giacomoni@univ-pau.fr}{jacques.giacomoni@univ-pau.fr}}, Guillaume Warnault\footnote{\href{guillaume.warnault@univ-pau.fr}{guillaume.warnault@univ-pau.fr}}}
\date{\small{LMAP (UMR E2S UPPA CNRS 5142) B\^atiment IPRA, Avenue de l'Universit\'{e}, 64013 Pau, France}}
\newtheorem{definition}{Definition}[section]
\newtheorem{proposition}[definition]{Proposition}
\newtheorem{corollary}[definition]{Corollary} 
\newtheorem{lemma}[definition]{Lemma}
\newtheorem{theorem}[definition]{Theorem}
\newtheorem{remark}[definition]{Remark}
\newtheorem{propriete}[definition]{Property}
\newcommand{\R}{{\mathbb{R}}}
\newcommand{\Rn}{{\mathbb{R}^d}}
\newcommand{\pfrac}{{(-\Delta)^s_p}}
\newcommand{\Wspz}{{W^{s,p}_0(\Omega)}}
\newcommand{\dt}{{\Delta t}}
\newcommand{\betam}{{\beta}}
\newcommand{\pow}[2]{{\lceil#1\rceil^{#2}}}
\numberwithin{equation}{section}
\renewenvironment{abstract}
{\begin{quote}
\noindent \rule{\linewidth}{.5pt}\par{\bfseries \abstractname.}}
{\medskip\noindent \rule{\linewidth}{.5pt}
\end{quote}
}
\begin{document}

\maketitle

\begin{abstract}
In this paper, we prove the existence and the uniqueness of a {weak and mild} solution of the following nonlinear parabolic problem involving the porous $p$-fractional Laplacian:
 
\begin{equation*}
\begin{cases}
\partial_t u+\pfrac (|u|^{m-1}u)=h(t,x,|u|^{m-1}u) &  \text{in} \; (0,T)\times \Omega,\\
u=0 & \text{in} \;  (0,T) \times \Rn\backslash \Omega, \\
u(0,\cdot)=u_0 &   \text{in} \; \Omega .
\end{cases}\
\end{equation*}
We also study {further} the {the homogeneous} case $h(u)=|u|^{q-1}u$ with $q>0$. In particular we investigate global time existence, uniqueness, global behaviour of weak solutions and stabilization.

\end{abstract}

\section{Introduction}
{Let} $p\in (1,\infty),$ $q>0,$ $s\in (0,1)$, $m>1$ and $Q_T:=(0,T)\times \Omega$ where $\Omega$ is a bounded domain of $\Rn$ with smooth boundary. The fractional $p$-Laplacian  operator on the Sobolev space $\Wspz$, up to a normalizing constant, states as:

$$\pfrac u(x)= 2\,p.v. \int_\Rn \frac{\pow{u(x)-u(y)}{p-1}}{|x-y|^{d+sp}}dy,$$

where $p.v.$ stands for Cauchy's principal value and we use the notation $\pow{X}{r}:=|X|^{r-1}X$.\\ In this paper, we study

\begin{equation}\label{pborig}
\begin{cases}\tag{$E$}
\partial_t u+\pfrac \pow{u}{m}={h}(t,x,\pow{u}{m})&  \text{in} \;Q_T,\\
u=0 &  \text{in} \;   (0,T)\times \Rn\backslash \Omega, \\
u(0,\cdot)=u_0&  \text{in} \; \Omega .
\end{cases}\
\end{equation}

Defining $v=\pow{u}{m}$, we investigate the auxiliary problem \eqref{pbh} satisfied by $v$ (see Subsection \ref{subsection_results}) and then prove existence, uniqueness of a weak-mild solution (according to Definition \ref{wmildsol}) to \eqref{pbh}. We also focus on the global behaviour of nonnegative weak-mild solutions for the power type case $h(u)=\pow{u}{q}$.

The study of nonlocal operators equations has been of great interest in recent times due to the occurrence in various physical phenomena where long-range type interaction takes place, motivating the development of an inherent mathematical theory see {\it e.g.} \cite{barbu2, pme, levine, lady}.  For instance the elliptic and parabolic type equations involving these operators appear in various fields such as finance, stochastic processes of Lévy type, physics, population dynamics, fluid dynamics.

In particular for $p=2$, the nonlocal diffusion evolution type problems have been studied using the {autonomous} model 

\begin{equation*}
\partial_t u +(-\Delta)^s(\pow{u}{m})=0.
\end{equation*}

For $m<1$ this equation is called the fractional fast diffusion equation, while the case $m>1$ is referred as the fractional porous media equation. This model has been vastly studied, we can see {\it e.g.} \cite{fpme, gfpme, bonforte, extension} where various methods (Caffareli-Silvestre extension technique, implicit time discretization and monotone operator methods) were used to solve the problem in the all space as well as in a bounded domain with suitable boundary conditions.

Diffusion case $m=1$ entails the following more general fractional $p$-Laplacian  model

\begin{equation}\label{m=1 case}
\partial_t u +(-\Delta)^s_pu=0.
\end{equation}

Equation {\eqref{m=1 case} has been deeply studied in different former contributions, see {\it e.g.} \cite{fpl, vazq, abdellaoui}. The elliptic associated problem has also been issued for existence, H\"older regularity and properties of weak solutions, see {\it e.g.} \cite{delpezzo-quaas, reghold}. The corresponding double nonlinear operator has recently met an increase of interest in the literature. In particular the porous fractional $p$-Laplacian  evolution equations are studied in \cite{kato, exi, CollierHauer}}.

Taking into account these contributions on the subject, we further study \eqref{pborig} in the present article. First, we enlarge the investigation of the case $m>1$ according to \cite{exi} where a particular doubly evolution equation is studied for $m\in [\frac{1}{2p-1},1)$. Additionaly, we study the nonautonomous version of \eqref{m=1 case} with additional source terms  compared to \cite{kato}.  
In \cite{CollierHauer}, the authors study the mild solution of the evolution problem involving the operator $\mathcal O: u\mapsto \pfrac\phi(u)+f(\cdot,u)$ (that includes the case $\phi(t)=\pow{t}{m}$). They restrict the perturbation of the diffusion operator to $f$ a global Lipschitz function in order to get $\mathcal O$ as a $\omega$-quasi $m$-$T$-accretive operator. In our work, considering a more general type of perturbations, we also use the accretive operators theory for proving existence of weak solutions. By combining the notions of weak and mild solutions, we further prove uniqueness of solutions which has not been established so far for this class of nonlinear and nonlocal evolution equations. We precisely obtain a bounded weak-mild solution verifying a variational formulation as well as a $L^1$-contraction property and an energy inequality. The uniqueness of the general problem for $h$ is obtained via Gronwall's Lemma and requires local Lipschitz regularity of $h$. To perform this weak-mild solutions approach, we first prove that, for $A=\pfrac \pow{\cdot}{m}$, the domain is dense in $L^1$ and $A$ is accretive in $L^1$ (which is up to our knowledge the only accretivity in $L^p$ spaces we can expect in case $m>1$). Then, we further investigate the global behaviour of the considered class of solutions. More precisely, we prove blow-up and finite time extinction behaviour under appropriate assumptions on parameters and the initial data. Using the comparison principle and appealing the semigroup theory, we also prove stabilization results, namely asymptotic convergence to a nontrivial steady state solution as $t$ goes to $\infty$. These results concerning qualitative properties of weak solutions are completely new with respect to the current literature.

\subsection{Functional setting and preliminaries}

We first present the functional settings, for additional references we refer for instance \cite{ bisci, rando}. The space framework is the fractional Sobolev space $W^{s,p}(\R^d)$ defined as:

$$W^{s,p}(\Rn)=\bigg\{ u\in L^p(\Rn) \; | \; \int_\Rn \int_\Rn  \frac{|u(x)-u(y)|^p}{|x-y|^{d+sp}}dy dx<\infty \bigg\},$$

endowed with the natural norm:

$$\|u\|_{W^{s,p}(\Rn)}=\bigg (\int_\Rn |u|^pdx +  \int_\Rn \int_\Rn  \frac{|u(x)-u(y)|^p}{|x-y|^{d+sp}}dy dx \bigg)^\frac{1}{p}.$$

The space $\Wspz$ is defined by $\Wspz= \big \{ u\in W^{s,p}(\Rn) \; | \; u=0 \text{ on } \Rn \backslash \Omega \big \},$ endowed with the Banach norm

$$\|u\|_\Wspz = \left( \int_\Rn \int_\Rn  \frac{|u(x)-u(y)|^p}{|x-y|^{d+sp}}dy dx\right)^{\frac1p}.  $$

The space $\Wspz$ is a reflexive space and a Poincar\'e inequality (see {\it e.g.} \cite[Th. 6.5]{bisci}) provides the equivalence of the two norms $\|\cdot \|_{W^{s,p}(\Rn)}$ and $\|\cdot\|_\Wspz$ for this space.\\
For $sp<d$ and $r\in [1,\frac{dp}{d-sp}]$, we have the continuous embedding $\Wspz\hookrightarrow L^r(\Omega)$ and if $r<\frac{dp}{d-sp}$ the embedding is compact (see {\it e.g.} \cite[Coro. 7.2]{rando}). For $sp\geq d$, we have the compact embedding $\Wspz\hookrightarrow L^r(\Omega)$ for all $r\in [1,+\infty)$.\\

By the definition of the fractional $p$-Laplacian, we have for any $u,v \in \Wspz$:

$$\langle\pfrac u,v\rangle=\int_\Rn \int_\Rn \frac{\pow{u(x)-u(y)}{p-1}(v(x)-v(y))}{|x-y|^{d+sp}}dx dy.$$
We introduce 
$$X_T:=\{ v\in  L^\infty(Q_T)\cap L^\infty(0,T ,\Wspz)\; | \; \partial_t  v \in L^2(Q_T) \}$$
and the Sobolev-Bochner space as follows:
\begin{equation*}
    \begin{split}
 \mathcal W_T&:=W^{1,m+1,(m+1)' }(0,T,L^{m+1}(\Omega),L^{(m+1)'}(\Omega))\\
 &=\{u\in L^{m+1}(0,T,L^{m+1}(\Omega)) \; | \; u'\in L^{(m+1)'}(0,T,L^{(m+1)'}(\Omega))\}.
 \end{split}
 \end{equation*}
where $(m+1)'$ is the conjugate exponent of $m+1$. Note that Lemma \ref{ipp} infers $\mathcal{W}_T \hookrightarrow C([0,T];L^2(\Omega))$ and Aubin-Simon Theorem paired with the interpolation inequality gives $X_T \subset C([0,T];L^r(\Omega))$ for all $r\in [1,\infty).$

\subsection{Main results}\label{subsection_results}

We consider the {auxiliary} problem associated to $(E)$:

\begin{equation}\label{pbh}\tag{$P_h$}
    \begin{cases}
     \partial_t \beta(v)+\pfrac v=h(t,x,v) &  \text{in} \;Q_T,\\
      v=0 &  \text{in} \; (0,T) \times \Rn\backslash \Omega, \\
      v(0,\cdot)=v_0 &  \text{in} \; \Omega ,
    \end{cases}\
\end{equation}

where $\beta: \R\to \R$ is defined by $\beta(\theta):=\pow{\theta}{\frac{1}{m}}$ and $h:\mathbb{R}^+\times \Omega \times \mathbb{R} \to \mathbb{R}$ such that for any $(t,x)\in \mathbb R^+\times \Omega$, the function $\theta \mapsto h(t,x,\theta )$ is continuous.\\
For some $q>0$, we define the following hypothesis:
\begin{equation}\label{f1}\tag{$H_q$}
\sup_{Q_\infty}|h(\cdot,\cdot,\theta )|\leq C_h (1+|\theta|^\frac{q}{m})  \text{ for any } \theta\in \R\end{equation}
where $Q_\infty:=(0,+\infty)\times \Omega.$
We also require the following hypothesis:\\ For any $T>0$ and for any compact set $K\subset \mathbb R$, there exists $C=C(K,T) \geq 0$  such that  for any $\theta_1$, $\theta_2\in K$
\begin{equation}\label{f3}\tag{$H$}
\sup_{Q_T}|h(\cdot,\cdot,\theta_1)-h(\cdot,\cdot,\theta_2)|\leq C|\beta(\theta_1)-\beta(\theta_2)|.
\end{equation}
{A prototype example of $h$ satisfying \eqref{f1} and \eqref{f3} is $h(\cdot,\cdot,\theta)=\pow{\theta}{\frac{q}{m}}$ with $q\geq 1$. Considering initial data $u_0$, $v_0$ in $L^\infty(\Omega)$, we now define the different required notions of solutions to \eqref{pbh} as follows}:
\begin{definition}[Weak solution]\label{defsol2}A function $v \in X_T$ is a weak solution of \eqref{pbh} if $\beta(v)\in C(0,T,L^{2}(\Omega))\cap L^{m+1}(Q_T)$ and $v(0,\cdot)=v_0$ {\it a.e.} in $\Omega$ such that for any $t\in [0, T]$:

\begin{equation}\label{fv2}
\begin{split}
\bigg[\int_\Omega \beta(v) \phi\,dx\bigg]^t_0-\int_0^t\int_\Omega \beta(v)\partial_t \phi \,dxd\tau+&\int_0^t\langle\pfrac v,\phi\rangle\,d\tau\\
&=\int_0^t\int_{\Omega}h(\tau,x,v)\phi\,dxd\tau,
\end{split}
\end{equation}
for any $\phi \in \mathcal{W}_T\cap L^p(0,T;\Wspz)$. \end{definition}and 
\begin{definition}[Strong solution]\label{defsol1}
A function $v\in  X_T$ is a strong solution of \eqref{pbh} if $v(0,\cdot)=v_0$ {\it a.e.} in $\Omega$, $\partial_t\beta(v)\in L^{1}(Q_T)$, $\beta(v)\in C([0,T],L^1(\Omega))$ such that for any $t\in [0, T]$:
\begin{equation*}
\int_0^t\int_\Omega \partial_t \beta(v) \phi\,dxd\tau+\int_0^t\langle\pfrac v ,\phi\rangle\,d\tau=\int_0^t\int_\Omega h(\tau,x,v)\phi\,dxd\tau,
\end{equation*}
for any $\phi \in L^{\infty}(Q_T)\cap L^p(0,T;\Wspz)$ such that $\partial_t \phi \in L^{(m+1)'}(Q_T)$. 
\end{definition}

Similarly to  \citep[Def 4.3, p.130]{barbu2}, we define the notion of mild solutions of the following problem

\begin{equation*}\label{pbo}\tag{$P_0$}
    \begin{cases}
     \partial_t u+\pfrac \pow{u}{m}=f &  \text{in} \;Q_T,\\
      u=0 & \text{in} \;(0,T) \times \Rn\backslash \Omega, \\
      u(0,\cdot)=u_0&  \text{in} \; \Omega .
    \end{cases}\
\end{equation*}

For this, we recall 
 in Appendix B the notion of approximate solution with Definition \ref{defapp}. Hence we have
\begin{definition}[Mild solutions]\label{mildsol}\ \\
Let $T>0$ and let $f\in L^1(Q_T)$. A mild solution of \eqref{pbo} is a function $u$ belonging to $C([0,T];L^1(\Omega))$ satisfying for any $\epsilon>0$, there is an $\epsilon$-approximate solution $U$  such that $\|U-u\|_{L^1(\Omega)}<\epsilon$ for all $t\in[0,T]$.\\
In a similar way, a mild solution of problem \eqref{pborig} is a function $u\in  C([0,T];L^1(\Omega))$ which is a mild solution of \eqref{pbo} with $f=h(\cdot,\cdot,\pow{u}{m}).$
\end{definition}
\begin{definition}[Weak-mild solutions]\label{wmildsol}
Let $T>0$. A weak-mild solution $v$ of \eqref{pbh} is a weak solution such that $\beta(v)$ is a mild solution of \eqref{pborig}.\\
We call $T$-weak-mild solution a function $v\in L^\infty_{loc}(0,T;L^\infty(\Omega))$ such that $v$ is a weak-mild solution of \eqref{pbh} on $Q_{\tilde T}$ for any $\tilde T<T$.
\end{definition}
\begin{remark}
Let $v$ be a weak-mild solution of \eqref{pbh} on $Q_T$ then $v$ is a $T$-weak-mild solution of \eqref{pbh}.
\end{remark}

We define the domain of the operator $A: u \mapsto \pfrac \pow{u}{m}$:

$$D(A)=\{ u  \; | \; \pow{u}{m}\in \Wspz, \; \pfrac \pow{u}{m}\in L^1(\Omega) \}.$$

We start with the following result proved in Appendix \ref{accsec}.

\begin{proposition}\label{propdom}
Let $v_0\in \Wspz\cap L^\infty(\Omega)$ then  $\beta(v_0)\in \overline{D(A)}^{L^1(\Omega)}$. 
\end{proposition}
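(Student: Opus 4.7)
The plan is to produce, for the given $v_0 \in \Wspz \cap L^\infty(\Omega)$, an explicit sequence $(w_n) \subset D(A)$ with $w_n \to \beta(v_0)$ in $L^1(\Omega)$. Since $D(A)$ is characterized by the two conditions $\pow{u}{m}\in \Wspz$ and $\pfrac \pow{u}{m}\in L^1(\Omega)$, the natural ansatz is to take $w_n := \beta(\tilde v_n)$ with $\tilde v_n \in C^\infty_c(\Omega)$ chosen so that $\tilde v_n \to v_0$ suitably, and then to verify the membership in $D(A)$ by computing $\pow{w_n}{m} = \tilde v_n$ directly. Note that $\beta(v_0)\in L^\infty(\Omega)$ with $\|\beta(v_0)\|_\infty \leq M^{1/m}$, where $M := \|v_0\|_\infty$, so we are really approximating an $L^\infty$ function.

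For the construction, I would extend $v_0$ by zero to $\Rn$, set $v_n^{(1)} := v_0 * \rho_{1/n}$ for a standard mollifier $\rho_{1/n}$, and then cut off by $\tilde v_n := \chi_n v_n^{(1)}$ with $\chi_n \in C^\infty_c(\Omega)$ satisfying $0\leq \chi_n \leq 1$ and $\chi_n \equiv 1$ on $\{x\in\Omega : \mathrm{dist}(x,\partial\Omega)\geq 2/n\}$. This yields $\tilde v_n \in C^\infty_c(\Omega)$ with $\|\tilde v_n\|_\infty \leq M$, $\tilde v_n \to v_0$ almost everywhere in $\Omega$, and $\tilde v_n \to v_0$ in $L^1(\Omega)$ by dominated convergence. (The hypothesis $v_0 \in \Wspz$ is not really needed for the $L^1$ convergence here, only $v_0\in L^\infty(\Omega)$ together with the extension by zero.)

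Next, I would verify $w_n := \beta(\tilde v_n) \in D(A)$. By construction $\pow{w_n}{m} = \tilde v_n \in C^\infty_c(\Omega) \subset \Wspz$, so the first condition is immediate. For the second, a standard computation shows that for any $\varphi \in C^\infty_c(\Rn)$ the fractional $p$-Laplacian $\pfrac\varphi$ is well-defined pointwise and belongs to $L^\infty(\Rn)$: the p.v.\ integral converges since the odd leading term in the Taylor expansion $\varphi(y)-\varphi(x) = \nabla\varphi(x)\cdot(y-x)+O(|y-x|^2)$ cancels by symmetry, leaving an integrand bounded by a constant times $|x-y|^{p-d-sp}$ which is integrable near $y=x$ for $s\in(0,1)$, while the tail is controlled by the compact support of $\varphi$. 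In particular $\pfrac \tilde v_n \in L^\infty(\Omega)\subset L^1(\Omega)$ since $\Omega$ is bounded. Finally, since $\tilde v_n \to v_0$ a.e.\ and $\beta$ is continuous, $w_n = \beta(\tilde v_n)\to \beta(v_0)$ a.e.; the bound $|w_n|\leq M^{1/m}$ then gives $w_n \to \beta(v_0)$ in $L^1(\Omega)$ by dominated convergence, establishing $\beta(v_0) \in \overline{D(A)}^{L^1(\Omega)}$.

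The main technical point is the $L^\infty$ control of $\pfrac$ applied to a $C^\infty_c$ function, but this is a classical fact for the fractional $p$-Laplacian and can simply be cited. The rest reduces to the mollify-then-cutoff procedure and a dominated convergence argument exploiting that $\beta$ is Hölder continuous of exponent $1/m$ on bounded sets.
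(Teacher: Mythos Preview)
Your approach is genuinely different from the paper's. You approximate $\beta(v_0)$ by $\beta(\tilde v_n)$ with $\tilde v_n\in C^\infty_c(\Omega)$ and want $\beta(\tilde v_n)\in D(A)$ directly, whereas the paper uses the resolvent $v_\epsilon:=(I+\epsilon A)^{-1}\beta(v_0)$, which lies in $D(A)$ because $\pfrac\pow{v_\epsilon}{m}=\epsilon^{-1}(\beta(v_0)-v_\epsilon)\in L^\infty(\Omega)$ by Theorem~\ref{exielli}, and then proves $\|v_\epsilon-\beta(v_0)\|_{L^{m+1}}^{m+1}\leq C\epsilon$ by testing against $\pow{v_\epsilon}{m}-v_0$; this is exactly where the hypothesis $v_0\in\Wspz$ enters.

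However, your key step has a gap when $1<p<2$. The Taylor remainder bound you state---that after cancelling the odd leading part the integrand is controlled by $|x-y|^{p-d-sp}$---is only valid for $p\geq 2$, where one may use $|\pow{A}{p-1}-\pow{B}{p-1}|\leq C|A-B|(|A|+|B|)^{p-2}$. For $p<2$ the map $t\mapsto\pow{t}{p-1}$ is merely $(p{-}1)$-H\"older, so subtracting the odd term $\pow{\nabla\varphi(x)\cdot(x-y)}{p-1}$ leaves an error of size $C|x-y|^{2(p-1)}$, hence an integrand bounded only by $|x-y|^{2(p-1)-d-sp}$, which is integrable at the diagonal precisely when $s<2-\tfrac{2}{p}$. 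In the complementary range $s\geq 2-\tfrac{2}{p}$ the claim $\pfrac\varphi\in L^\infty(\Omega)$ is \emph{false}: if $\varphi\in C^\infty_c(\Omega)$ behaves like $|x-x_0|^2$ near an interior critical point $x_0$, then $\pfrac\varphi(x_0)=-\infty$. So this is not ``a classical fact that can simply be cited''. Whether one still has $\pfrac\varphi\in L^1(\Omega)$ in that regime is a substantially more delicate question (one must quantify the blow-up rate of $\pfrac\varphi(x)$ as $x$ approaches each critical point and integrate it), and your argument does not touch this. The subtlety is presumably why the paper does \emph{not} assert $C^\infty_c(\Omega)\subset D(A)$: in the Corollary immediately following, the inclusion $C^\infty_c(\Omega)\subset\overline{D(A)}^{L^1}$ is obtained \emph{from} Proposition~\ref{propdom} (applied with $v_0=\pow{\phi}{m}$), not the other way around. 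The resolvent route sidesteps the whole issue.
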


We derive easily the following density result:
\begin{corollary}
$D(A)$ is dense in $L^1(\Omega)$.
\end{corollary}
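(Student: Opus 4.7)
The plan is to combine Proposition \ref{propdom} with the density of $C_c^\infty(\Omega)$ in $L^1(\Omega)$, exploiting that $\beta$ is a bijection on $\mathbb R$ with inverse $\theta\mapsto\pow{\theta}{m}$.

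Pick an arbitrary $u\in L^1(\Omega)$ and a sequence $(u_n)_n\subset C_c^\infty(\Omega)$ with $u_n\to u$ in $L^1(\Omega)$. Set
\[
v_n:=\pow{u_n}{m}=|u_n|^{m-1}u_n,
\]
so that $\beta(v_n)=u_n$. To apply Proposition \ref{propdom} to $v_n$, I need $v_n\in\Wspz\cap L^\infty(\Omega)$. Boundedness is immediate since $\|v_n\|_\infty=\|u_n\|_\infty^{m}$. For the fractional Sobolev regularity, observe that the map $\theta\mapsto\pow{\theta}{m}$ is locally Lipschitz on $\mathbb R$ (its derivative $m|\theta|^{m-1}$ is bounded on compact sets since $m>1$), hence $v_n$ is Lipschitz continuous on $\R^d$, vanishes outside $\mathrm{supp}(u_n)\Subset\Omega$, and therefore belongs to $W^{1,\infty}_0(\Omega)$. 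A direct estimate of its Gagliardo seminorm then gives $v_n\in\Wspz$.

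Proposition \ref{propdom} now yields $u_n=\beta(v_n)\in\overline{D(A)}^{L^1(\Omega)}$ for every $n$. Since $\overline{D(A)}^{L^1(\Omega)}$ is closed in $L^1(\Omega)$ and $u_n\to u$ in $L^1(\Omega)$, we conclude $u\in\overline{D(A)}^{L^1(\Omega)}$, and $u$ being arbitrary, this establishes $\overline{D(A)}^{L^1(\Omega)}=L^1(\Omega)$.

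The only point requiring a brief verification is the inclusion $W^{1,\infty}_0(\Omega)\subset\Wspz$ for compactly supported Lipschitz functions; this follows from a standard splitting of the double Gagliardo integral into the region $|x-y|\le 1$ (controlled by the Lipschitz constant, using $sp<d+p$) and $|x-y|>1$ (controlled by $\|v_n\|_\infty$ together with the compact support of $v_n$, using $sp>0$). No other step is delicate, which matches the authors' claim that the corollary follows easily.
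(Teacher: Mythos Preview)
Your proposal is correct and follows essentially the same route as the paper: show that for $\phi\in C_c^\infty(\Omega)$ one has $\pow{\phi}{m}\in\Wspz\cap L^\infty(\Omega)$, apply Proposition~\ref{propdom} to get $\phi=\beta(\pow{\phi}{m})\in\overline{D(A)}^{L^1(\Omega)}$, and conclude by density of $C_c^\infty(\Omega)$ in $L^1(\Omega)$. The only cosmetic difference is that the paper obtains $\pow{\phi}{m}\in\Wspz$ in one line via the pointwise estimate $|\pow{\phi}{m}(x)-\pow{\phi}{m}(y)|\leq C|\phi(x)-\phi(y)|$ (using that $\phi$ itself is already in $\Wspz$), whereas you take the slightly longer detour through $W^{1,\infty}_0(\Omega)\subset\Wspz$.
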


\begin{proof}
Let $\phi\in C^\infty_0(\Omega)$. We have $\pow{\phi}{m} \in L^\infty(\Omega) $ and $m>1$ gives:

$$|\pow{\phi}{m}(x)-\pow{\phi}{m}(y)|\leq C|\phi(x)-\phi(y)|,$$

and so $\pow{\phi}{m}\in \Wspz$. Using Proposition \ref{propdom}, we obtain $ C^\infty_0(\Omega)\subset \overline{ D(A)}^{L^1(\Omega)}$ from which one has $\overline{D(A)}^{L^1(\Omega)}=L^1(\Omega)$.
\end{proof}

We now state our main results:

\begin{theorem}\label{thgen}
Let $v_0\in L^\infty (\Omega)\cap \Wspz$. Assume \eqref{f1} for some $q>0$. Then, there exists $T\in (0,+\infty]$ such that  \eqref{pbh} admits a $T$-weak-mild solution $v$. Moreover, we have:
\begin{enumerate}[label=(\roman*)]
    \item if $q\in (0,1]$, then $T=\infty$.
\item Let $T'\in (0,T)$ and let $u$ be a weak-mild solution of \eqref{pbh} on $Q_{T'}$ with the right-hand side data $\hat h$ and the initial data $u_0\in { \Wspz\cap L^{\infty}(\Omega)}$,
then
\end{enumerate}
\begin{equation*}
\sup_{[0,T']}\|\beta(u)-\beta(v)\|_{L^1(\Omega))}\leq \|\beta(u_0)-\beta(v_0)\|_{L^1(\Omega)}+\int_0^{T'}\|\hat h(t,\cdot,u)-h(t,\cdot,v)\|_{L^1(\Omega)}\,dt.
\end{equation*}
\begin{enumerate}[label=(\roman*)]\setcounter{enumi}{2}
   \item If \eqref{f3} holds then the $T$-weak-mild solution is unique.
    \item If  $\beta(v_0) \in D(A)$ and let $\tilde T<T$ such that $h(\cdot,\cdot,v)\in W^{1,1}([0,\tilde T];L^1(\Omega))$ then 
    $v$ is a strong solution on $Q_{\tilde T}$.
\end{enumerate}
\end{theorem}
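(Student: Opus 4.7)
The plan is to glue together three ingredients: nonlinear semigroup theory on $L^1(\Omega)$ for the mild part; a time-discrete energy estimate for the weak part; and a fixed-point iteration that absorbs the nonlinear source $h$.

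\textbf{Local mild solution.} First I would freeze $w\in L^\infty(Q_T)$ in the source and solve $\partial_t u + Au = h(\cdot,\cdot,w)$ with $u(0)=\beta(v_0)$. Proposition \ref{propdom} and its corollary establish density of $D(A)$ in $L^1(\Omega)$, and the appendix supplies the $T$-accretivity of $A$ in $L^1$, so the Crandall-Liggett exponential formula produces a unique mild solution. Concretely, one builds $\varepsilon$-approximate solutions by implicit Euler, solving at each step the elliptic problem $u_k+\tau_k A u_k = u_{k-1}+\tau_k f_k$ in variational form (monotonicity of $\pfrac$ composed with the strictly monotone $\pow{\cdot}{m}$ gives existence and uniqueness on each slice). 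Comparison with the scalar ODE $y'=C_h(1+y^{q/m})$ arising from \eqref{f1} supplies a uniform $L^\infty$ bound, and a Schauder-type fixed-point argument (or a Banach contraction on a small time slab using $L^1$-contraction) on the map $w\mapsto u$ produces a local mild solution of \eqref{pborig} with $f=h(\cdot,\cdot,\pow{u}{m})$. Since the comparison ODE is globally defined precisely when $q/m\le 1$, which is forced by $q\le 1<m$, this gives (i).

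\textbf{Weak formulation.} To show that $v:=\pow{u}{m}$ sits in $X_T$ and satisfies \eqref{fv2}, I would test the discrete elliptic step with $v_k$ and with the finite-difference $(v_k-v_{k-1})/\tau_k$: the first yields a uniform bound in $L^\infty(0,T;\Wspz)$; the second, combined with the convexity of the primitive of $\beta$ so that $\partial_t v_k\,\partial_t\beta(v_k)\gtrsim |\partial_t v_k|^2$ at the discrete level, yields $\partial_t v\in L^2(Q_T)$. Aubin-Simon compactness extracts a limit $v\in X_T$ converging strongly in $L^{m+1}(Q_T)$, and Minty's monotonicity trick together with continuity of $\beta$ passes to the limit in the discrete weak formulation to recover \eqref{fv2}. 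Uniqueness of the $L^1$-limit then identifies $v$ with the previously built mild solution, yielding the weak-mild solution.

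\textbf{Contraction, uniqueness, regularity.} Point (ii) is intrinsic to the mild construction: $T$-accretivity applied to two $\varepsilon$-approximate pairs gives $\|u_k-\tilde u_k\|_1\le \|u_{k-1}-\tilde u_{k-1}\|_1+\tau_k\|f_k-\tilde f_k\|_1$, which telescopes and passes to the limit. Point (iii) follows by inserting \eqref{f3}: on the compact range where both solutions live (thanks to uniform $L^\infty$ bounds), $\|\hat h-h\|_{L^1}\le C\|\beta(u)-\beta(v)\|_{L^1}$, so Gronwall applied to (ii) closes uniqueness. Point (iv) is standard semigroup regularity: when $\beta(v_0)\in D(A)$ and $h(\cdot,\cdot,v)\in W^{1,1}([0,\tilde T];L^1(\Omega))$, the mild solution is a.e.\ strongly differentiable with $\partial_t\beta(v)\in L^1(Q_{\tilde T})$ and satisfies the equation in $L^1(\Omega)$ for a.e.\ $t$, matching Definition \ref{defsol1}.

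The principal obstacle I anticipate is reconciling the mild and weak notions in a single scheme while carrying the power-growth source $h$: the fixed-point step demands uniform $L^\infty$ bounds that survive the semigroup limit, and the weak formulation demands strong $L^{m+1}$-compactness of the very same approximants, so the time discretization must be engineered to deliver both simultaneously, with a lifespan $T$ depending only on $\|v_0\|_\infty$.
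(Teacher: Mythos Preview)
Your outline captures the right ingredients (accretivity, implicit-Euler discretization, energy estimates, Aubin--Simon compactness, Gronwall for (iii), Barbu-type regularity for (iv)), but it diverges from the paper in one structural point and carries a genuine gap there.

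\textbf{Where the paper differs.} The paper never runs a fixed-point argument on the map $w\mapsto u$. Instead it \emph{truncates} the source, replacing $h$ by $h_R(t,x,\theta)=h(t,x,\mathrm{sgn}(\theta)\min(|\theta|,R))$, and solves the truncated problem \eqref{pbpsi} directly by a single semi-implicit time discretization \eqref{pbn}: the diffusion is implicit, the source is explicit at the previous step. That one scheme simultaneously yields the $L^\infty$ bound \eqref{bdd}, the energy estimates (Step~3), the compactness and limit passage for the weak formulation (Steps~4--5), \emph{and} the mild solution (the discrete iterates are literally $\epsilon$-approximate solutions in the sense of Definition~\ref{defapp}, so Proposition~\ref{mildcor} applies). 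The truncation is removed a posteriori: from \eqref{bdd} one finds, for each $T<T^\star$ as in \eqref{tstar}, an $R$ with $\|v\|_{L^\infty(Q_T)}\le R$, so $h_R(\cdot,\cdot,v)=h(\cdot,\cdot,v)$. For $q\le 1$ the bound $T^\star$ is either infinite or independent of the initial datum, and global existence follows by concatenation on intervals of fixed length, not from an ODE comparison.

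\textbf{The gap in your route.} Your fixed-point step is where the proposal is thin. A Banach contraction on $w\mapsto u$ in $L^1$ would require a Lipschitz bound on $\theta\mapsto h(t,x,\theta)$ with respect to $\beta(\theta)$, which is exactly \eqref{f3}---but \eqref{f3} is \emph{not} assumed for existence, only \eqref{f1}. A Schauder argument would need compactness of the map in some topology where $h$ is continuous; this is obtainable, but only after you have the $X_T$-bounds and Aubin--Simon compactness, i.e.\ after the weak-solution machinery is already in place. So the fixed point cannot precede the weak construction as you have it ordered. The paper's truncation-then-remove device sidesteps this entirely: with $h_R$ bounded, existence on $Q_T$ for \emph{any} $T$ is immediate, and the only question is whether the solution stays below $R$, which is a pure a~priori $L^\infty$ computation. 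Your identification step (``uniqueness of the $L^1$-limit identifies $v$ with the previously built mild solution'') also presumes you have two objects to identify; in the paper there is only one, read two ways.
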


Under additional assumptions on $h$ and $v_0$, we obtain global existence results. Precisely, defining $d(\cdot,\partial \Omega)$ the distance function up to the boundary, we have:
\begin{theorem}\label{subhomo2}
Under the hypothesis in Theorem \ref{thgen}, assume that $p>\frac{q}{m}+1$ and there exists $C>0$ such that {$0\leq v_0\leq C d(\cdot,\partial \Omega)^s$}. Then, there exists a nonnegative $\infty$-weak-mild solution  $v_{glob}\in L^\infty(Q_\infty)$ of \eqref{pbh}.
\end{theorem}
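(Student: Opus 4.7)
The plan is to construct a time-independent, bounded super-solution $\bar v$ of \eqref{pbh} that dominates $v_0$, and then use the $T$-accretivity of $A = \pfrac \pow{\cdot}{m}$ in $L^1(\Omega)$---the order-preserving refinement of the $L^1$-contraction used in Theorem \ref{thgen}---to deduce that the local $T$-weak-mild solution provided by Theorem \ref{thgen} stays uniformly bounded in $L^\infty(\Omega)$ on its entire lifespan, and therefore extends globally in time.

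To build $\bar v$, let $w \in \Wspz \cap L^\infty(\Omega)$ be the positive weak solution of the fractional $p$-torsion problem $\pfrac w = 1$ in $\Omega$, $w = 0$ in $\Rn\setminus\Omega$. By the fine boundary regularity theory for the fractional $p$-Laplacian (Iannizzotto--Mosconi--Squassina), there exist $c_1,c_2>0$ with $c_1 d(\cdot,\partial\Omega)^s \leq w \leq c_2 d(\cdot,\partial\Omega)^s$ in $\Omega$. For $M>0$ set $\bar v := Mw$; the $(p-1)$-homogeneity of $\pfrac$ gives $\pfrac \bar v = M^{p-1}$, and by \eqref{f1} the super-solution inequality $\pfrac \bar v \geq |h(t,x,\bar v)|$ reduces to
\[
M^{p-1} \geq C_h\bigl(1 + M^{q/m}\|w\|_\infty^{q/m}\bigr),
\]
which holds for every sufficiently large $M$ since the subhomogeneity hypothesis $p-1 > q/m$ forces the left-hand side to eventually dominate. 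Enlarging $M$ further so that $M \geq C/c_1$, the assumption $0 \leq v_0 \leq C d(\cdot,\partial\Omega)^s$ yields $v_0 \leq Mw = \bar v$ a.e.\ in $\Omega$.

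Applying the order-preserving comparison (from $T$-accretivity of $A$ in $L^1$) to $v$ and the stationary super-solution $\bar v$ gives $\|(\beta(v(t))-\beta(\bar v))_+\|_{L^1(\Omega)} \leq \|(\beta(v_0)-\beta(\bar v))_+\|_{L^1(\Omega)} = 0$, hence $v(t,\cdot) \leq \bar v \leq M\|w\|_\infty$ a.e.\ for every $t$ in the lifespan of $v$. The analogous comparison with the subsolution $0$---after the harmless replacement $h \mapsto h(\cdot,\cdot,v_+)$, which does not alter the equation where $v \geq 0$---yields $v \geq 0$. This uniform $L^\infty$-bound is time-independent, so iterating the local existence result of Theorem \ref{thgen} on successive intervals (whose length depends only on $\|v(t_0)\|_\infty$) produces a weak-mild solution on every $[0,\tilde T]$ with $\tilde T < \infty$, i.e.\ the desired $\infty$-weak-mild solution $v_{glob} \in L^\infty(Q_\infty)$.

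The principal technical obstacle is justifying the order-preserving comparison $v \leq \bar v$ rigorously in the weak-mild framework, since $\bar v$ is only a \emph{stationary} super-solution of the doubly nonlinear, nonlocal problem. This is most naturally handled by transferring the comparison to the implicit-Euler time-discretization used in the proof of Theorem \ref{thgen}, where $T$-accretivity of $A$ provides step-by-step order preservation, and then passing to the mild-solution limit via the approximate-solution machinery of Definition \ref{wmildsol}. Once this comparison is in hand, the construction of $\bar v$ from the torsion function, the choice of $M$, and the iteration for global continuation are essentially routine consequences of the subhomogeneity $p-1 > q/m$.
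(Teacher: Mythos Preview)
Your approach is essentially the same as the paper's: build a stationary, bounded barrier dominating $v_0$, then propagate the comparison through the implicit-Euler scheme (which is exactly what the paper does via Proposition \ref{princcomp} with $g=\beta$, rather than invoking $T$-accretivity in the abstract), and finally pass to the limit and iterate. Your last paragraph correctly identifies that the comparison must be carried out at the discrete level, and this is precisely how the paper proceeds.

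The one noteworthy difference is the construction of the barrier. You take $\bar v = Mw$ with $w$ the fractional $p$-torsion function and use the $(p-1)$-homogeneity of $\pfrac$ together with $p-1>q/m$ to get $M^{p-1}\geq C_h(1+M^{q/m}\|w\|_\infty^{q/m})$ for $M$ large; this is clean and purely algebraic. The paper instead first picks $K$ large so that the solution $w_K$ of $\pfrac w_K=K$ dominates $v_0$, and then solves the nonlinear sub-homogeneous elliptic problem $\pfrac w = K + C_h(1+|w|^{q/m})$ to obtain the barrier $w$. Your route avoids this extra elliptic existence step, at the cost of relying on the two-sided boundary estimate $w\asymp d(\cdot,\partial\Omega)^s$ for the torsion function (which the paper also invokes, via \cite{delpezzo-quaas}, for $w_K$). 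Either construction feeds identically into the discrete comparison $v^n\leq \bar v$ and the choice $R\geq\|\bar v\|_\infty$ in \eqref{pbpsi}. Note that the paper does not establish $T$-accretivity of $A$ explicitly (only accretivity, in Theorem \ref{accretif}); at the discrete level it uses the elementary elliptic comparison of Proposition \ref{princcomp}, which is what you would actually need when you ``transfer the comparison to the implicit-Euler time-discretization''.
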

{In the case where \eqref{f3} is satisfied, the following result giving the existence of maximal solutions holds:}
\begin{theorem}\label{GB2}
Let $T\in (0,+\infty)$ and $v$ be the $T$-weak-mild solution of \eqref{pbh} for the data $v_0\in L^\infty(\Omega)\cap\Wspz$ and $h$ satisfying \eqref{f1} and \eqref{f3}. {Then, we have the alternative:
\begin{itemize}
    \item [-] Either $\lim_{t\to T}\|v(t)\|_{L^\infty(\Omega)} = \infty$ { ($v$ is a maximal solution).}
\item[-] Or there exists $\varepsilon>0$ {\it s.t.} $v$ is a weak-mild solution on $Q_{T+\varepsilon}$ ($v$ is extendable on $(0,T+\varepsilon)$).
\end{itemize}}
\end{theorem}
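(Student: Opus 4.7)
The plan is to argue by contrapositive: I assume that $\limsup_{t\to T^-}\|v(t)\|_{L^\infty(\Omega)}<\infty$, hence $\|v\|_{L^\infty(Q_T)}\leq M$ for some $M>0$, and I construct an extension of $v$ into a weak-mild solution on $Q_{T+\varepsilon}$ for some $\varepsilon>0$. Together with the uniqueness from Theorem~\ref{thgen}(iii), this forces one of the two alternatives in the statement to hold.

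The first step is to extract a terminal datum $v(T)\in\Wspz\cap L^\infty(\Omega)$. Since $v$ is a weak-mild solution on $Q_{\tilde T}$ for every $\tilde T<T$, we have $v\in L^\infty(0,\tilde T;\Wspz)$ and $\partial_t v\in L^2(Q_{\tilde T})$ for each such $\tilde T$. The uniform $L^\infty$ bound together with \eqref{f1} yields $\|h(\cdot,\cdot,v)\|_{L^\infty(Q_T)}\leq C_h(1+M^{q/m})$. Testing the equation with $v$ first provides an a priori bound on $\int_0^T\|v(\tau)\|_\Wspz^p\,d\tau$ and on $\sup_{t<T}\int_\Omega \Phi(v(t))\,dx$ where $\Phi(\theta):=\frac{m}{m+1}|\theta|^{(m+1)/m}$; testing with $\partial_t v$ and dropping the nonnegative contribution $\int_\Omega \beta'(v)(\partial_t v)^2\,dx\geq 0$ upgrades this to the pointwise estimate $\sup_{t\in[0,T)}\|v(t)\|_\Wspz<+\infty$. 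Combined with $\partial_t v\in L^2(Q_T)$ and the embedding $\mathcal{W}_T\hookrightarrow C([0,T];L^2(\Omega))$ recorded after the definition of $\mathcal{W}_T$, the limit $v(T):=\lim_{t\to T^-}v(t)$ exists in $L^2(\Omega)$; weak-$*$ lower semicontinuity transfers the $\Wspz$ and $L^\infty$ bounds to the limit.

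The second step is to apply Theorem~\ref{thgen} to \eqref{pbh} with initial datum $v(T)$, which produces $\varepsilon>0$ and a weak-mild solution $\tilde v$ on $[T,T+\varepsilon]$. Setting $w:=v$ on $[0,T]$ and $w:=\tilde v$ on $[T,T+\varepsilon]$, the matching $v(T)=\tilde v(T)$ ensures continuity in $L^2(\Omega)$ and gives $w\in X_{T+\varepsilon}$; splitting the time integrals in Definition~\ref{defsol2} at $\tau=T$ shows that $w$ satisfies the weak formulation on $Q_{T+\varepsilon}$, while concatenating the approximate solutions defining $\beta(v)$ and $\beta(\tilde v)$ (and using the $L^1$-continuity at $T$) produces the required mild-solution property. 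Hence $v$ is extendable beyond $T$, which is the second alternative.

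The main obstacle lies in the first step: one has to upgrade the regularity $v\in X_{\tilde T}$, only guaranteed for each $\tilde T<T$, into estimates uniform on the full half-open interval $[0,T)$. The subtle point is that testing with $\partial_t v$ introduces the coefficient $\beta'(\theta)=\frac{1}{m}|\theta|^{(1-m)/m}$, which is singular at $\theta=0$ when $m>1$; however, this singular term enters with a favourable sign and can simply be discarded, and the remaining estimate delivers the pointwise $\Wspz$ bound. Once this is in hand, local existence in Theorem~\ref{thgen} and the uniqueness from Theorem~\ref{thgen}(iii) make the restart and the consistency with $v$ automatic.
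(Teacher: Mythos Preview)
Your contrapositive is taken incorrectly. The negation of ``$\lim_{t\to T}\|v(t)\|_{L^\infty(\Omega)}=\infty$'' is ``$\liminf_{t\to T}\|v(t)\|_{L^\infty(\Omega)}<\infty$'', not ``$\limsup<\infty$''. Under the correct negation you only get a sequence $t_n\to T$ with $\|v(t_n)\|_{L^\infty}\leq \tilde C$; you do \emph{not} get a uniform bound $\|v\|_{L^\infty(Q_T)}\leq M$, and therefore your first step (uniform a priori estimates on $[0,T)$, definition of the terminal datum $v(T)\in\Wspz\cap L^\infty(\Omega)$) collapses. As written you only prove the weaker alternative ``$\limsup_{t\to T}\|v(t)\|_{L^\infty}=\infty$ or extendable'', which leaves the case $\liminf<\infty=\limsup$ uncovered.

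The paper avoids this entirely by restarting \emph{before} $T$ rather than \emph{at} $T$. The key observation is that the local lifespan obtained in Corollary~\ref{coro1} depends only on $\|\cdot\|_{L^\infty}$ of the initial datum, via the explicit formula \eqref{tstar}. Assuming $\liminf_{t\to T}\|v(t)\|_{L^\infty}\leq\tilde C$, one picks $t_0\in(T-\tfrac12 T_0,T)$ with $\|v(t_0)\|_{L^\infty}\leq\tilde C$, where $T_0>0$ is the lifespan bound associated to $\tilde C$. Since $t_0<T$, the already established $T$-weak-mild regularity gives $v(t_0)\in\Wspz\cap L^\infty(\Omega)$ for free (weak continuity into $\Wspz$), with no need for uniform-in-$t$ estimates up to $T$. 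Restarting at $t_0$ yields a solution $\tilde v$ on $[t_0,t_0+\tilde T-\epsilon]$ with $\tilde T>T_0>2(T-t_0)$, hence the concatenated function lives strictly past $T$; uniqueness (Theorem~\ref{unipb2}) ensures it agrees with $v$ on $[0,T)$.

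So even if you repair the contrapositive, your route requires extracting $v(T)\in\Wspz\cap L^\infty$ from merely a bounded subsequence, which is not available; the paper's restart-from-inside trick sidesteps the issue and is both shorter and logically cleaner.
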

{\begin{remark}\label{tmax}
Under the condition \eqref{f3}, the unique $T$-weak-mild solution v of \eqref{pbh} obtained by Theorem \ref{thgen}  can be extended on $(0,T') $ with $T'\geq T$ such that $\displaystyle\lim_{t\to T'}\|v(t)\|_{L^\infty(\Omega)}=\infty$ if $T'<\infty$. Thus we define the maximum life time of the solution 
$$T_{max}=\sup \{ T>0 \; | \;  v \text{ is the weak-mild solution of \eqref{pbh} on }Q_T \}$$
and $v$ is the $T_{max}$-weak-mild solution of \eqref{pbh}.
\end{remark}}

Next, we focus on the following equation:
\begin{equation}\tag{$P_q$}\label{pbq}
    \begin{cases}
     \partial_t \beta(v)+\pfrac v=\pow{v}{\frac{q}{m}} & \text{in} \;Q_T,\\
      v=0 & \text{in} \; (0,T) \times \Rn\backslash \Omega, \\
      v(0,\cdot)=v_0 & \text{in} \; \Omega .
    \end{cases}\
\end{equation}

In particular, we prove the following global existence and stabilization result:

\begin{theorem}\label{subhomo1}

Let $p>\frac{q}{m}+1$ and $v_0\in L^\infty(\Omega)\cap \Wspz $ be such that $\frac1C d(\cdot,\partial \Omega)^s \leq v_0\leq C d(\cdot,\partial \Omega)^s$ for some $C>0$. Then, there exists $v_{glob}\in L^\infty(Q_\infty)$ a nonnegative $\infty$-weak-mild solution of \eqref{pbq} such that, for any $r<\infty$, $v_{glob}$ tends to $ v_\infty$ in $L^r(\Omega)$ as $t$ goes to $\infty$ where $v_\infty$ is the unique nontrivial nonnegative stationary solution.
\end{theorem}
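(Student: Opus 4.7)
The plan is to prove the stabilization by a classical ``squeeze-by-equilibria'' argument: trap $v_{glob}$ between two monotone trajectories of \eqref{pbq} that are forced to converge to the same stationary state. Global existence of a nonnegative $v_{glob}\in L^\infty(Q_\infty)$ is immediate from Theorem \ref{subhomo2} applied with $h(\theta)=\pow{\theta}{q/m}$, so the work concentrates on the long-time behaviour.

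As a preliminary I would establish the existence, uniqueness and sharp boundary behaviour of the stationary profile $v_\infty$. Since $p-1>q/m$, the energy
\[
\mathcal E(v)=\frac1p\|v\|_\Wspz^p-\frac{m}{q+m}\int_\Omega (v^+)^{(q+m)/m}\,dx
\]
is coercive and weakly lower semicontinuous on $\Wspz$, so a nontrivial nonnegative minimizer $v_\infty$ exists and weakly solves $\pfrac v_\infty=\pow{v_\infty}{q/m}$; uniqueness of the positive solution follows from the fractional Picone/Díaz--Saa inequality, which uses precisely the sublinearity $p-1>q/m$, and the two-sided bound $\tfrac1{C'}d(\cdot,\partial\Omega)^s\le v_\infty\le C'd(\cdot,\partial\Omega)^s$ is obtained from the standard $d^s$-barrier and the boundary regularity in \cite{reghold}.

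The crucial algebraic identity
\[
\pfrac(\lambda v_\infty)-\pow{\lambda v_\infty}{q/m}=(\lambda^{p-1}-\lambda^{q/m})\pow{v_\infty}{q/m}
\]
shows, thanks to $p-1>q/m$, that $\lambda v_\infty$ is a stationary sub-solution of \eqref{pbq} for every $\lambda\in(0,1)$ and a stationary super-solution for every $\lambda>1$. Using $v_0\sim d^s\sim v_\infty$, pick $0<\lambda_0<1<\mu_0$ with $\lambda_0 v_\infty\le v_0\le \mu_0 v_\infty$, and let $\underline v,\overline v$ denote the weak-mild solutions of \eqref{pbq} starting from $\lambda_0 v_\infty$ and $\mu_0 v_\infty$ respectively. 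The $L^1$-contraction inequality of Theorem \ref{thgen}(ii), combined with a Kato-type inequality for $\pfrac$, supplies a comparison principle, whence $\underline v(t)\le v_{glob}(t)\le\overline v(t)$ for all $t\ge 0$; comparing $\underline v(\cdot)$ with the time-shifted solution $\underline v(\cdot+h)$ (and analogously for $\overline v$) gives $\underline v$ nondecreasing and $\overline v$ nonincreasing in $t$, both uniformly bounded in $L^\infty$ by $\mu_0\|v_\infty\|_{L^\infty(\Omega)}$.

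Monotone and dominated convergence then deliver pointwise and $L^r$-limits $\underline v_\infty\le\overline v_\infty$ for every $r<\infty$; the energy inequality together with the monotonicity of $\pfrac$ lets one pass to the limit in the weak formulation along diverging times $t_n$, so both limits are nonnegative stationary solutions, nontrivial because $\ge\lambda_0 v_\infty>0$, hence equal to $v_\infty$ by uniqueness. The sandwich finishes the proof of $v_{glob}(t)\to v_\infty$ in $L^r(\Omega)$ for every $r<\infty$. The main obstacle I anticipate is formalising the comparison principle and the monotonicity inside the weak-mild class, where the chain rule needed to test with $(\underline v-v_{glob})^+$ is not directly at hand; the natural route is to read comparison off at the level of the Crandall--Liggett time discretisation (where it is purely algebraic), lift it to the mild solution by continuity of the nonlinear semigroup, and then propagate it to the weak-mild class via Theorem \ref{thgen}(ii)--(iii).
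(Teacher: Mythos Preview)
Your proposal is correct and follows essentially the same sandwich-by-monotone-barriers strategy as the paper, including the use of $\lambda v_\infty$ ($\lambda<1$) as the lower barrier and the resolution of the comparison/monotonicity issue at the level of the implicit time discretisation. The only notable differences are cosmetic: for the upper barrier the paper does not take $\mu_0 v_\infty$ but rather the solution $\overline v_0$ of $\pfrac\overline v_0=K+\overline v_0^{\,q/m}$ already built in the proof of Theorem~\ref{subhomo2} (your choice is simpler and works equally well), and it identifies the limits $\underline v_\infty,\overline v_\infty$ with $v_\infty$ via a semigroup fixed-point argument, $\overline v_\infty=\lim_{n}S(t+n)\overline v_0=S(t)\overline v_\infty$, rather than by passing to the limit in the weak formulation.
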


Finally the theorems below state the conditions to get quenching and blow-up in finite time respectively:

\begin{theorem}\label{GB1}Let $v_0\in L^\infty(\Omega)\cap \Wspz $, $q\leq 1$ and $p<\frac{q}{m}+1$. Let $v$ be a $\infty$-weak-mild solution of \eqref{pbq}.
Then,  assume that $\|v_0\|_{L^{r+\frac{1}{m}}(\Omega)}$ is small enough where $r$ is a suitable parameter, the extinction in finite time of $v$ holds, that is there exists $ t_0$ such that $v(t)=0$ for all $t\geq t_0$.



\end{theorem}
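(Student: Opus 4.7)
The plan is to derive a differential inequality for $y(t):=\|v(t)\|_{L^\alpha(\Omega)}^\alpha$ of the form $\frac{dy}{dt}\le -C_1\,y^{\tau/\alpha}+C_2\,y^{\sigma/\alpha}$ with $0<\tau<\sigma<\alpha$, and then exploit it via a standard ODE comparison which, for $y(0)$ below a threshold, forces $y$ to vanish at an explicit finite time. I set $\alpha:=r+1/m$, $\tau:=r+p-1$ and $\sigma:=r+q/m$ with $r\ge 1$ chosen large enough that $\rho:=d\tau/(d-sp)\ge\alpha$ (in the case $sp\ge d$ any $\rho<\infty$ is available through the embedding $\Wspz\hookrightarrow L^\rho(\Omega)$). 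The assumption $p<q/m+1$ translates to $\tau<\sigma$, while $q\le 1$ combined with $p-1<q/m\le 1/m$ yields $\tau<\alpha$; these two strict inequalities are precisely what powers the extinction argument below.

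The differential inequality comes from testing \eqref{pbq} with $\phi=\pow{v}{r}$. The chain rule gives $\partial_t\beta(v)\cdot\pow{v}{r}=\frac{1}{m\alpha}\partial_t|v|^\alpha$, so the time derivative contributes $\frac{1}{m\alpha}y'(t)$. For the source one has $\int_\Omega\pow{v}{q/m}\cdot\pow{v}{r}\,dx=\|v\|_{L^\sigma(\Omega)}^\sigma$, and since $\sigma\le\alpha$ Hölder's inequality bounds this by $|\Omega|^{1-\sigma/\alpha}\,y^{\sigma/\alpha}$. For the diffusion I invoke the fractional Stroock--Varopoulos inequality
\begin{equation*}
\langle\pfrac v,\pow{v}{r}\rangle\ge c_{r,p}\,\|\pow{v}{(r+p-1)/p}\|_{\Wspz}^{p},
\end{equation*}
which combined with the Sobolev embedding $\Wspz\hookrightarrow L^{dp/(d-sp)}(\Omega)$ produces a lower bound of the form $c\,\|v\|_{L^\rho(\Omega)}^{\tau}$; a final Hölder step based on $\rho\ge\alpha$ upgrades this to $C_1\,y^{\tau/\alpha}$. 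Collecting the three pieces yields the announced inequality.

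Because $\tau/\alpha<\sigma/\alpha<1$, if the initial datum is small enough that $C_2\,y(0)^{(\sigma-\tau)/\alpha}\le C_1/2$ (which is exactly the smallness condition on $\|v_0\|_{L^{r+1/m}(\Omega)}$ in the statement), the right-hand side is $\le-\tfrac{C_1}{2}\,y^{\tau/\alpha}$ at $t=0$; a continuity/bootstrap argument in $y$ then shows the bound persists for all positive times, so $\frac{dy}{dt}\le -\tfrac{C_1}{2}\,y^{\tau/\alpha}$ holds globally. Since $\tau/\alpha<1$, integrating gives $y(t)^{1-\tau/\alpha}\le y(0)^{1-\tau/\alpha}-\tfrac{C_1(1-\tau/\alpha)}{2}\,t$, from which $y(t)=0$ (hence $v(t)\equiv 0$) for every $t\ge t_0:=\frac{2\,y(0)^{1-\tau/\alpha}}{C_1(1-\tau/\alpha)}$, as claimed.

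The principal technical obstacle is the rigorous justification of $\pow{v}{r}$ as a test function in the weak formulation of Definition \ref{defsol2}, whose admissible space $\mathcal W_T\cap L^p(0,T;\Wspz)$ imposes time regularity which $\pow{v}{r}$ does not obviously inherit from a generic weak-mild solution. The standard fixes are a Steklov time-averaging applied to $\beta(v)$ before testing, or passing through the $\varepsilon$-approximate solutions underlying the mild-solution definition and then invoking the $L^1$-contraction of Theorem \ref{thgen}\,(ii) to transfer the energy estimate to the limit. The pointwise fractional Stroock-Varopoulos inequality is by now classical, arising from the scalar algebraic inequality applied inside the singular-kernel representation of $\pfrac$. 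The ``suitable parameter $r$'' in the statement is precisely any $r\ge 1$ for which $\rho\ge\alpha$ holds, and the threshold on $\|v_0\|_{L^{r+1/m}(\Omega)}$ is the one dictated by the comparison above.
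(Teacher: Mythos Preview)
Your proof is correct and follows essentially the same route as the paper. Both arguments test \eqref{pbq} with $\pow{v}{r}$, extract a Stroock--Varopoulos--type lower bound on the diffusion, apply a Sobolev embedding to compare with the $L^{r+1/m}$ norm, and close via the ODE $y'\le -C_1 y^{\tau/\alpha}+C_2 y^{\sigma/\alpha}$ with $\tau/\alpha<\sigma/\alpha<1$. The only cosmetic difference is in the form of the Stroock--Varopoulos step: the paper bounds $\langle\pfrac v,\pow{v}{r}\rangle\ge c\|v\|_{W^{\tilde s,\tilde p}_0}^{\tilde p}$ with $\tilde p=p-1+r$, $\tilde s=sp/\tilde p$, whereas you use $\langle\pfrac v,\pow{v}{r}\rangle\ge c\|\pow{v}{(r+p-1)/p}\|_{\Wspz}^p$; both lead to the same critical Lebesgue exponent $d(r+p-1)/(d-sp)$ and the same choice of $r$. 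Your concern about the admissibility of $\pow{v}{r}$ as a test function is in fact not an obstacle here: the paper handles this upstream in Proposition~\ref{estim}, noting that a weak-mild solution lies in $X_T$ (so $v\in L^\infty(Q_T)$ and $\partial_t v\in L^2(Q_T)$), which makes $\pow{v}{r}\in\mathcal W_T\cap L^p(0,T;\Wspz)$ directly; no Steklov averaging or passage through $\varepsilon$-approximations is needed.
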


Note that if {$q\geq 1$}, then \eqref{f3} holds and $T_{max}$ is well-defined by Remark  \ref{tmax}.

\begin{theorem}\label{GB3}
Let $p<\frac{q}{m}+1$ and $v_0\in L^\infty(\Omega)\cap\Wspz$ satisfying  
$$E(v_0):=\frac{1}{p}\|v_0\|_\Wspz^p-\frac{m}{m+q}\|v_0\|^{\frac{q}{m}+1}_{L^{{\frac{q}{m}+1}}(\Omega)}\leq 0.$$
{Then, the following assertions hold}:
\begin{itemize}
\item If $q>1$ then $T_{max}<\infty$ and $v$ the unique $T_{max}$-weak-mild solution of \eqref{pbh} satisfies 
$$\lim_{t\to T_{max}} \|v(t)\|_{L^{\infty}(\Omega)}=+\infty.$$
    \item  If $q=1$, then $\infty$-weak-mild solution $v$ satisfies $$\lim_{t\to \infty} \|v(t)\|_{L^{{\frac{1}{m}+1}}(\Omega)}=+\infty.$$
    \item If $q<1$, then $\infty$-weak-mild solution $v$ satisfies $$\lim_{T\to \infty} \|v\|_{L^\infty(Q_T)}=+\infty.$$
\end{itemize}
\end{theorem}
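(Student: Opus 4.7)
The plan is to combine the energy dissipation of \eqref{pbq} with a differential inequality on $F(t):=\|v(t)\|^{\frac{1}{m}+1}_{L^{\frac{1}{m}+1}(\Omega)}$, and separate the three cases by ODE comparison. Formally testing the equation with $\partial_t v$ produces the energy identity
$$\frac{d}{dt}E(v(t)) = -\frac{1}{m}\int_\Omega |v|^{\frac{1}{m}-1}(\partial_t v)^2\,dx \leq 0,$$
so $E(v(t))\leq E(v_0)\leq 0$, which rewrites as $\|v(t)\|^p_{\Wspz}\leq \frac{pm}{m+q}\|v(t)\|^{\frac{q}{m}+1}_{L^{\frac{q}{m}+1}(\Omega)}$. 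Since $\partial_t v$ is not a priori an admissible test function in \eqref{fv2}, I would establish this identity on the implicit-in-time discretization used to construct the weak-mild solution in Theorem \ref{thgen} (where it holds termwise after testing against the incremental quotient), then pass to the limit via the lower semi-continuity of $\|\cdot\|_{\Wspz}^p$ and the strong convergence of $\beta(v_n)$ in $L^2$.

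Secondly, $v$ itself is an admissible test function (since $v\in X_T$ gives $v\in L^p(0,T;\Wspz)$ and $v\in\mathcal W_T$ for $m>1$), and the chain rule identity $\partial_t\beta(v)\cdot v=\frac{1}{m+1}\partial_t|v|^{\frac{1}{m}+1}$ produces
$$\frac{1}{m+1}F'(t) = \|v(t)\|^{\frac{q}{m}+1}_{L^{\frac{q}{m}+1}(\Omega)} - \|v(t)\|^p_{\Wspz}.$$
Inserting the energy inequality yields the master bound
$$F'(t) \geq (m+1)\lambda\,\|v(t)\|^{\frac{q}{m}+1}_{L^{\frac{q}{m}+1}(\Omega)},\qquad \lambda=\frac{m+q-pm}{m+q}>0,$$
where the positivity of $\lambda$ uses precisely the standing assumption $p<\frac{q}{m}+1$.

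The three dichotomies then reduce to an ODE comparison. For $q>1$, Hölder's inequality on the bounded domain $\Omega$ gives $\|v(t)\|^{\frac{q}{m}+1}_{L^{\frac{q}{m}+1}(\Omega)}\geq C\,F(t)^{\alpha}$ with $\alpha=\frac{q+m}{m+1}>1$; since $E(v_0)\leq 0$ with $v_0\neq 0$ forces $F(0)>0$, the ODE $F'\geq CF^\alpha$ blows up in finite time $T^*<\infty$, and the elementary bound $F(t)\leq |\Omega|\|v(t)\|^{\frac{1}{m}+1}_{L^\infty(\Omega)}$ transfers this to $\|v(t)\|_{L^\infty(\Omega)}\to\infty$ as $t\to T_{max}\leq T^*$. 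For $q=1$ the same step gives $\alpha=1$, hence $F(t)\geq F(0)e^{Ct}\to\infty$, i.e.\ $\|v(t)\|_{L^{\frac{1}{m}+1}(\Omega)}\to\infty$. For $q<1$, the Hölder comparison runs the wrong way, so I would argue by contradiction: assuming $\|v\|_{L^\infty(Q_\infty)}\leq M$, $F$ stays bounded and nondecreasing, so $\|v(\cdot)\|^{\frac{q}{m}+1}_{L^{\frac{q}{m}+1}(\Omega)}$ is integrable on $(0,\infty)$; extracting a sequence $t_n\to\infty$ along which it vanishes and plugging into the energy bound contradicts $E(v_0)<0$ (the borderline case $E(v_0)=0$ with $v_0\neq 0$ requires an extra Nehari-manifold style argument ruling out the stationary trajectory).

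The principal obstacle is the rigorous derivation of the energy identity, since $\partial_t v\in L^2(Q_T)$ is not among the admissible test functions of Definition \ref{defsol2}; everything that follows is algebraic manipulation and standard ODE comparison.
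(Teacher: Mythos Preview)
Your skeleton---energy decrease $E(v(t))\le E(v_0)\le 0$ plus the differential inequality for $F(t)=\|v(t)\|_{L^{1/m+1}}^{1/m+1}$---is exactly the paper's route, and the cases $q>1$ and $q=1$ are handled identically (H\"older to get $F'\ge CF^{(m+q)/(m+1)}$, then ODE comparison). One remark: you do not need to rederive the energy inequality from the discretization, since the paper has already packaged it as the pointwise estimate \eqref{energyineq} in Proposition~\ref{estimations2}; with $h=\pow{v}{q/m}$ this gives $E(v(t))\le E(v_0)$ directly after the computation $\int\pow{v}{q/m}\partial_t v=\tfrac{m}{m+q}\partial_t\|v\|_{L^{q/m+1}}^{q/m+1}$.

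For $q<1$ your argument diverges from the paper and carries a genuine gap. The paper does not argue by contradiction via integrability of $t\mapsto\|v(t)\|_{L^{q/m+1}}^{q/m+1}$; instead it uses the elementary pointwise interpolation
\[
\|v(t)\|_{L^{1/m+1}}^{1/m+1}\le \|v(t)\|_{L^{q/m+1}}^{q/m+1}\,\|v\|_{L^\infty(Q_T)}^{(1-q)/m},
\]
which feeds back into $Y'(t)\ge C\|v\|_{L^\infty(Q_T)}^{(q-1)/m}Y(t)$ on $[0,T]$ and, after integrating and comparing with $\|v\|_{L^\infty(Q_T)}^{(m+1)/m}\ge Y(T)$, forces $\|v\|_{L^\infty(Q_T)}\to\infty$. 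This works uniformly for $E(v_0)\le 0$ with $v_0\neq 0$. Your contradiction argument, by contrast, only yields $\tfrac1p\|v(t_n)\|_{\Wspz}^p\le E(v_0)+o(1)$, which is a contradiction when $E(v_0)<0$ but gives nothing when $E(v_0)=0$. The ``Nehari-manifold style'' patch you allude to (showing that a nonstationary trajectory with $E(v_0)=0$ must have $E(v(\varepsilon))<0$ for small $\varepsilon>0$) is plausible---indeed any nontrivial stationary solution has $E>0$ since $p<\tfrac{q}{m}+1$---but making the strict energy drop rigorous requires control you have not established, whereas the paper's interpolation trick sidesteps the issue entirely.
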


The rest of this paper is organized as follows. In Section \ref{partelli}, we study the elliptic problem associated with \eqref{pbh}. Section \ref{partiepara} is dedicated to the proofs of Theorems \ref{thgen}, \ref{subhomo2} and \ref{GB2}. In Section \ref{mainpart}, we study the problem \eqref{pbq} and establish Theorem \ref{subhomo1} in Subsection \ref{partsubho} and  Theorem \ref{GB1} and \ref{GB3} in Subsection \ref{partGB}.

\section{Elliptic problem}\label{partelli}

In this section, we investigate:
\begin{equation} \tag{Q}\label{pbe}
    \begin{cases}
      \beta(v)+\lambda \pfrac v=f & \text{in} \;\Omega,\\
      v=0 & \text{in} \; \Rn\backslash \Omega,
    \end{cases}\
\end{equation}
where $\lambda$ is a positive parameter. 
\begin{definition}\label{solelli}
Let $f\in L^{\infty}(\Omega)$. A weak solution of \eqref{pbe} is a function $v$ belonging to $ \Wspz\cap L^{\frac{1}{m}+1}(\Omega)$ such that:

\begin{equation}\label{fve}
\int_\Omega \beta(v)\phi\,dx+\lambda
\langle \pfrac v,\phi\rangle=\int_\Omega f \phi\,dx,
\end{equation}
for any $\phi \in \Wspz\cap L^{\frac{1}{m}+1}(\Omega)$.
\end{definition}
We recall the following weak comparison principle.
\begin{proposition}[Comparison principle]\label{princcomp}
Let $\lambda>0$ and $r \in [1,\infty]$. Let $g:\mathbb R\to \mathbb R$ be a nondecreasing function and $u,v\in L^r(\Omega) \cap \Wspz$  such that $g(u)$, $g(v)$ belong to $L^{r'}(\Omega)$ and satisfy
$$\int_\Omega g(u) \phi\,dx+ \lambda\langle\pfrac u, \phi\rangle \leq \int_\Omega g(v) \phi\,dx+ \lambda\langle\pfrac v,\phi\rangle ,$$
for any nonnegative $\phi \in L^r(\Omega)\cap \Wspz $. Then, $u\leq v$ {\it a.e.} in $\Omega$.
 \end{proposition}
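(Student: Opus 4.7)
The plan is to subtract the two sides of the hypothesised inequality and to test the resulting inequality with $\phi=(u-v)_+$, which is an admissible non-negative test function since the positive-part map preserves both $L^r(\Omega)$ and $\Wspz$ (via the pointwise bound $|u_+(x)-u_+(y)|\leq|u(x)-u(y)|$). This yields
\begin{equation*}
\int_\Omega \bigl(g(u)-g(v)\bigr)(u-v)_+\,dx \;+\; \lambda\bigl\langle \pfrac u - \pfrac v,\,(u-v)_+\bigr\rangle \leq 0.
\end{equation*}
The first integral is non-negative because $g$ is nondecreasing: it vanishes on $\{u\leq v\}$, and on $\{u>v\}$ both factors have the same sign. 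The strategy is therefore to establish that the nonlocal bracket is also non-negative and, in fact, strictly positive unless $(u-v)_+\equiv 0$.

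Setting $w:=u-v$, $a:=u(x)-u(y)$, $b:=v(x)-v(y)$, the nonlocal bracket is the double integral against the positive kernel $|x-y|^{-(d+sp)}$ of
\begin{equation*}
\bigl(\pow{a}{p-1}-\pow{b}{p-1}\bigr)\bigl(w_+(x)-w_+(y)\bigr),
\end{equation*}
which I would show is pointwise non-negative by a four-case analysis on the signs of $w(x)$ and $w(y)$. When $w(x),w(y)>0$ one has $w_+(x)-w_+(y)=a-b$ and strict monotonicity of $t\mapsto \pow{t}{p-1}$ on $\R$ gives non-negativity; when $w(x),w(y)\leq 0$ the expression vanishes; and in the mixed cases (say $w(x)>0\geq w(y)$) one has $a-b=w(x)-w(y)>0$ and $w_+(x)-w_+(y)=w(x)>0$, so by strict monotonicity \emph{both} factors are strictly positive. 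The symmetric mixed case is analogous.

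Combining the two non-negativities with the reversed inequality forces each term to vanish. The main obstacle is then to use the strict positivity in the mixed case to conclude $(u-v)_+\equiv 0$. Suppose for contradiction that $A:=\{u>v\}$ has positive measure; since $u=v=0$ on $\R^d\setminus\Omega$ the complement $A^c$ has positive (in fact infinite) measure in $\R^d$, so the integrand, being strictly positive on $A\times A^c$, would force the nonlocal bracket to be strictly positive, contradicting the vanishing established above. Hence $|A|=0$ and $u\leq v$ a.e.\ in $\Omega$.
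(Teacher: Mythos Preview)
Your proof is correct and follows essentially the same route as the paper: test with $(u-v)_+$, discard the $g$-term by monotonicity, and analyse the nonlocal bracket pointwise. The paper compresses your four-case analysis into the single displayed inequality
\[
\int_{\mathbb{R}^{2d}}\bigl(\pow{a}{p-1}-\pow{b}{p-1}\bigr)\bigl(w_+(x)-w_+(y)\bigr)\,\frac{dx\,dy}{|x-y|^{d+sp}}
\;\geq\;
\int_{\Omega^+\times\Omega^+}\bigl(\pow{a}{p-1}-\pow{b}{p-1}\bigr)\bigl(w(x)-w(y)\bigr)\,\frac{dx\,dy}{|x-y|^{d+sp}},
\]
where $\Omega^+=\mathrm{Supp}\,w_+$, and then simply writes ``we deduce $u\leq v$''. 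Your mixed-case argument (strict positivity of the integrand on $A\times A^c$ with $A=\{u>v\}$, combined with $|A^c|=\infty$) is exactly what is needed to close that deduction, and is in fact more explicit than the paper's own treatment.
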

\begin{proof}
Denoting {$\Omega^+=\mathrm{Supp}(u-v)^+$} and choosing $\phi=(u-v)^+ \in L^r(\Omega) \cap \Wspz$ as a test function, we get
$$\int_\Omega (g(u)-g(v))(u-v)^+\,dx+\lambda \langle\pfrac u -\pfrac v, (u-v)^+\rangle\leq 0.$$
Since $g$ is nondecreasing, the first term in the left-hand side is nonnegative, hence  we have:
\begin{equation}\label{23}
\langle\pfrac u -\pfrac v, (u-v)^+\rangle\leq 0.
\end{equation}
{By straightforward computations, we have
\begin{equation*}
\begin{split}
&\int_{\mathbb{R}^{2d}}\big (\pow{u(x)-u(y)}{p-1}-\pow{v(x)-v(y)}{p-1}\big) \big ( (u-v)^+(x)-(u-v)^+(y)\big )dxdy\geq \nonumber\\ 
&\int_{\Omega^+\times\Omega^+}\big (\pow{(u(x)-u(y))}{p-1}-\pow{(v(x)-v(y)}{p-1}\big) \big ( (u-v)(x)-(u-v)(y)\big )dxdy.
\end{split}
\end{equation*}
From above inequality and \eqref{23},} we deduce $u\leq v$ {\it a.e.} in $\Omega$.
\end{proof}
\begin{theorem}\label{exielli}
Let $f\in L^{\infty}(\Omega)$. Then, \eqref{pbe} admits a unique weak solution (in sense of Definition \ref{solelli}) belonging to $ C^s(\Rn)$. Moreover, if $f$ is nonnegative then the solution is nonnegative.
\end{theorem}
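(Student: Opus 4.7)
The plan is to treat \eqref{pbe} variationally through the strictly convex energy
\begin{equation*}
J(v) := \frac{\lambda}{p}\|v\|_{\Wspz}^{p} + \frac{m}{m+1}\int_\Omega |v|^{\frac{1}{m}+1}\,dx - \int_\Omega f v\,dx,
\end{equation*}
defined on the reflexive Banach space $X := \Wspz \cap L^{\frac{1}{m}+1}(\Omega)$, and then to extract the four statements of the theorem (existence, uniqueness, $C^s$-regularity, non-negativity) in a short sequence: minimise $J$, write the Euler--Lagrange equation, truncate to obtain an $L^\infty$ bound, and invoke external global Hölder regularity together with Proposition \ref{princcomp}.

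\textbf{Existence and uniqueness.} Since $p,\frac{1}{m}+1 > 1$ and both $\Wspz$ and $L^{\frac{1}{m}+1}(\Omega)$ are uniformly convex, $J$ is strictly convex on $X$. For coercivity I would bound the linear term by $\|f\|_{L^\infty(\Omega)}|\Omega|^{\frac{m}{m+1}}\|v\|_{L^{\frac{1}{m}+1}(\Omega)}$ and absorb it into the $L^{\frac{1}{m}+1}$ term via Young's inequality, so that $J(v) \to +\infty$ as $\|v\|_X \to \infty$. Weak lower semicontinuity is standard (both nonlinear terms are continuous convex functions of the respective norms, and the linear term is weakly continuous). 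Hence $J$ admits a unique minimiser $v \in X$, and computing the Gâteaux derivative in an arbitrary direction $\phi \in X$ gives exactly \eqref{fve}, using $\frac{d}{dt}\!\left(\frac{|t|^{\frac{1}{m}+1}}{\frac{1}{m}+1}\right)=\beta(t)$. Uniqueness of weak solutions then follows either from the strict convexity of $J$ or directly from Proposition \ref{princcomp} applied with $g=\beta$.

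\textbf{The $L^\infty$ estimate.} Setting $M := \|f\|_{L^\infty(\Omega)}^{m}$, the truncation $(v-M)^+ \in \Wspz \cap L^\infty(\Omega)$ is an admissible test function (it vanishes outside $\Omega$ since $M>0$). A case analysis over $\R^d \times \R^d$ split by $\{v>M\}$ and $\{v\le M\}$ — in the same spirit as the computation at the end of the proof of Proposition \ref{princcomp} — shows that $\langle \pfrac v,(v-M)^+\rangle \geq 0$. Since $\beta(v)\geq M^{\frac{1}{m}}=\|f\|_{L^\infty(\Omega)}\geq f$ on $\{v>M\}$, plugging this test function into \eqref{fve} forces
\begin{equation*}
0 \leq \int_{\{v>M\}}(\beta(v)-f)(v-M)\,dx \leq 0,
\end{equation*}
so $(v-M)^+ \equiv 0$; the symmetric argument on $(v+M)^-$ yields $\|v\|_{L^\infty(\Omega)}\leq \|f\|_{L^\infty(\Omega)}^{m}$.

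\textbf{Regularity and sign.} With $v\in L^\infty(\Omega)$, the right-hand side $f-\beta(v)$ of $\lambda\,\pfrac v = f-\beta(v)$ lies in $L^\infty(\Omega)$, and the smoothness of $\partial\Omega$ allows one to invoke the global Hölder regularity theory for the Dirichlet fractional $p$-Laplacian with bounded source to conclude $v\in C^s(\R^d)$. Finally, if $f\geq 0$, Proposition \ref{princcomp} applied with $u\equiv 0$ (which solves the equation with right-hand side $0\leq f$) and the weak solution $v$ yields $v\geq 0$ a.e. in $\Omega$. I expect the only step which is not self-contained to be the $C^s$-regularity, which requires a careful citation of the relevant global boundary regularity theory for the fractional $p$-Laplacian; everything else reduces to convex variational methods plus a single Stampacchia-type truncation.
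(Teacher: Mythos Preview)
Your proof is correct and follows the same variational scheme as the paper: minimise the same functional $J$ on $\Wspz\cap L^{\frac{1}{m}+1}(\Omega)$, derive \eqref{fve} as the Euler--Lagrange equation, obtain uniqueness and non-negativity from Proposition~\ref{princcomp}, and cite external $C^s$ regularity once $v\in L^\infty(\Omega)$ is known.

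The one genuine difference lies in the $L^\infty$ step. The paper introduces the barrier $w\in\Wspz$ solving $\lambda\,\pfrac w=\|f\|_{L^\infty(\Omega)}$, quotes \cite[Th.~2.7]{reghold} to get $w\in L^\infty(\Omega)$, and then applies Proposition~\ref{princcomp} (with $g=\beta$) to trap $-w\le v\le w$. Your Stampacchia truncation with $M=\|f\|_{L^\infty(\Omega)}^{m}$ is more self-contained: it avoids the auxiliary elliptic problem and the extra appeal to regularity theory for the barrier, and it yields the explicit bound $\|v\|_{L^\infty(\Omega)}\le \|f\|_{L^\infty(\Omega)}^{m}$ directly. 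One small correction: you assert $(v-M)^+\in\Wspz\cap L^\infty(\Omega)$, but at that stage you only know $v\in\Wspz\cap L^{\frac{1}{m}+1}(\Omega)$; this is harmless because Definition~\ref{solelli} only requires test functions in $\Wspz\cap L^{\frac{1}{m}+1}(\Omega)$, and $(v-M)^+$ clearly lies there. After this adjustment your argument goes through as written.
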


\begin{proof}
We consider the functional defined on $\Wspz\cap L^{\frac{1}{m}+1}(\Omega)$ by:
$$J(v)=\frac{\lambda}{p}\|v\|^p_\Wspz + \frac{m}{m+1}\|v\|^{\frac{1}{m}+1}_{L^{\frac{1}{m}+1}(\Omega)}-\int_\Omega fv\,dx.$$
Thus, applying H\"older inequality, we have for suitable constants $c$ and $c'$ :
$$J(v) \geq c \|v\|^p_\Wspz+c'\|v\|^{\frac{1}{m}+1}_{L^{\frac{1}{m}+1}(\Omega)}-\|f\|_{L^{m+1}}\|v\|_{L^{\frac{1}{m}+1}(\Omega)}.$$
Hence $J$ is coercive.\\
Let {$(v_n)_{n\in\mathbb{N}}$} be a minimizing sequence associated to the infimum of $J$
, then $(v_n)$ is bounded in the reflexive spaces $\Wspz$ and $L^{\frac{1}{m}+1}(\Omega)$. Therefore, there exists $v\in\Wspz\cap L^{\frac{1}{m}+1}(\Omega)$ such that up to a subsequence, $v_n \rightharpoonup v$ in $\Wspz$ and $L^{\frac{1}{m}+1}(\Omega)$.\\
From weakly lower semicontinuity ({\it w.l.s.c.} for short) of the associated norms, $v$ is a global minimizer of $J$ and since $J$ is G\^ateaux-differentiable, $v$ satisfies \eqref{fve} for any $\phi \in \Wspz\cap L^{\frac{1}{m}+1}(\Omega)$.\\
Now, we show that $v\in L^\infty(\Omega)$. For this, we consider $w\in \Wspz$ the unique nonnegative solution to	%
\begin{equation*} 
    \left\{\begin{array}{ll}
         \lambda \pfrac w=\|f\|_{L^\infty} &\text{in} \;\Omega,\\
      w=0  &\text{in} \; \Rn\backslash \Omega.
    \end{array}\right.
    \end{equation*}
By \cite[Th. 2.7]{reghold}, $w\in L^\infty(\Omega)$. Since the function $\beta$ is nondecreasing and $\beta(0)=0$, Proposition \ref{princcomp} yields that $-w \leq v \leq w$
and  thus $v\in L^\infty(\Omega)$. Finally \cite[Th. 2.7]{reghold} gives $v\in C^s(\Rn)$. 

The uniqueness of the solution follows from Proposition \ref{princcomp}. Moreover if $f\geq0$, applying once again Proposition \ref{princcomp} with $u\equiv 0$, we deduce that $v\geq0$.
\end{proof}
\section{Quasilinear parabolic problem}\label{partiepara}
\subsection{Existence}\label{subsection 3.1}
We consider the auxiliary problem:  
\begin{equation}\label{pbpsi}\tag{$P_{h,R}$}
    \begin{cases}
     \partial_t \beta(v)+\pfrac v=h_R(t,x,v) & \text{in} \;Q_T,\\
      v=0 & \text{in} \;  (0,T) \times \Rn\backslash \Omega, \\
      v(0,\cdot)=v_0 & \text{in} \; \Omega ,
    \end{cases}\
\end{equation}
where $h_R$ is defined, for some $R>0$,  by $h_R(t,x,\theta)=h(t,x,sgn(\theta)\min(|\theta|,R))$ where we recall the definition of the $sgn$ function:
$$sgn(x)= \left\{ \begin{array}{l l}
     \frac{x}{|x|}& \mbox{if } x\neq 0  \\
      0. &\mbox {else} 
\end{array}\right. $$
First, we prove the following theorem:

\begin{theorem}\label{thexi1}
Let $T>0$ and $v_0\in L^\infty(\Omega) \cap \Wspz$. Assume \eqref{f1}, then \eqref{pbpsi} admits a weak solution $v\in X_T$ in the sense of Definition \ref{defsol2}.
\end{theorem}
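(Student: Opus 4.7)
My plan is to use Rothe's method (implicit Euler time discretization), reducing the parabolic problem to a sequence of elliptic problems solvable by Theorem~\ref{exielli}. Given $N\in\mathbb{N}^*$, set $\tau=T/N$, $t_n=n\tau$, $v^0=v_0$, and inductively define $v^{n+1}$ as the unique weak solution of
\begin{equation*}
\beta(v^{n+1})+\tau\,\pfrac v^{n+1}=\beta(v^n)+\tau\,h_R(t_n,\cdot,v^n)\quad\text{in }\Omega,
\end{equation*}
with $v^{n+1}=0$ on $\R^d\setminus\Omega$. The right-hand side lies in $L^\infty(\Omega)$ thanks to the truncation bound $\|h_R\|_\infty\leq C_h(1+R^{q/m})$, so Theorem~\ref{exielli} produces $v^{n+1}\in\Wspz\cap L^\infty(\Omega)$, and the elliptic comparison principle of Proposition~\ref{princcomp} is available at each step.

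Next I would establish three $\tau$-uniform a priori bounds. For the $L^\infty$ estimate, compare $v^n$ with the constant supersolution $y^n$ defined by $\beta(y^{n+1})=\beta(y^n)+\tau C_h(1+R^{q/m})$, $y^0=\|v_0\|_\infty$: since $y^n$ extended by $0$ outside $\Omega$ satisfies $\pfrac y^n\geq 0$ on $\Omega$, it is a supersolution, and iterating yields $\|v^n\|_\infty\leq y^n\leq M:=(\|v_0\|_\infty^{1/m}+TC_h(1+R^{q/m}))^m$. Testing with $v^{n+1}$ and using the Young-type inequality $(\beta(a)-\beta(b))a\geq\tfrac{1}{m+1}(|a|^{(m+1)/m}-|b|^{(m+1)/m})$, then telescoping, gives $\sum_n\tau\|v^{n+1}\|_\Wspz^p\leq C$. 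Finally, testing with $v^{n+1}-v^n$ and using that $\beta^{-1}$ is Lipschitz on the range of $\beta(v^n)$ (by the $L^\infty$ bound), so that $(\beta(v^{n+1})-\beta(v^n))(v^{n+1}-v^n)\geq C_M^{-1}|v^{n+1}-v^n|^2$, together with the convexity estimate $\langle\pfrac v^{n+1},v^{n+1}-v^n\rangle\geq\tfrac{1}{p}(\|v^{n+1}\|_\Wspz^p-\|v^n\|_\Wspz^p)$ for the diffusion, supplies the discrete analogue of $\partial_t v\in L^2(Q_T)$.

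With these estimates I would form the piecewise constant interpolant $v_\tau(t)=v^{n+1}$ on $(t_n,t_{n+1}]$ and the piecewise affine interpolant $\hat v_\tau$. Both are bounded in $L^\infty(Q_T)\cap L^p(0,T;\Wspz)$, and $\partial_t\hat v_\tau$ in $L^2(Q_T)$, so the Aubin--Simon lemma (applied through the compact embedding $\Wspz\hookrightarrow L^2(\Omega)$ recalled in the functional setting) extracts a subsequence with $\hat v_\tau\to v$ strongly in $L^2(Q_T)$ and a.e. The discrete time-derivative bound forces $\|v_\tau-\hat v_\tau\|_{L^2(Q_T)}\to 0$, so $v_\tau\to v$ strongly as well. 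The $L^\infty$ control then upgrades this to $\beta(v_\tau)\to\beta(v)$ in every $L^r(Q_T)$, $r<\infty$, and since $\theta\mapsto h_R(t,x,\theta)$ is continuous and uniformly bounded, dominated convergence handles the source term.

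The principal obstacle is identifying the weak-$L^{p'}$ limit $\chi$ of $\pfrac v_\tau$ as $\pfrac v$. I would handle this by Minty's monotonicity trick: monotonicity of $\pfrac$ gives $\int_0^T\langle\pfrac v_\tau-\pfrac w,v_\tau-w\rangle\,dt\geq 0$ for every $w\in L^p(0,T;\Wspz)$; passing to the limit, after controlling $\limsup_\tau\sum_n\tau\langle\pfrac v^{n+1},v^{n+1}\rangle$ via the energy identity inherited by the limit equation, yields $\int_0^T\langle\chi-\pfrac w,v-w\rangle\,dt\geq 0$, and the choice $w=v+\varepsilon\psi$ with $\varepsilon\to 0$ then identifies $\chi=\pfrac v$. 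The initial condition $v(0,\cdot)=v_0$ is recovered from $\hat v_\tau(0)=v_0$ together with the continuity into $L^2(\Omega)$ inherited from the inclusion $\mathcal{W}_T\hookrightarrow C([0,T];L^2(\Omega))$ mentioned in the preliminaries, completing the verification that $v\in X_T$ is a weak solution in the sense of Definition~\ref{defsol2}.
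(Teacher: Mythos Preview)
Your Rothe-method strategy is exactly the one the paper uses, and the a priori estimates you list match those in the paper's Step~2 and Step~3 (in particular, testing with $v^{n+1}-v^n$ and using convexity of $\tfrac1p\|\cdot\|_\Wspz^p$ together with the local Lipschitz continuity of $\beta^{-1}$ on bounded sets). Two points deserve comment.

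\emph{The $L^\infty$ barrier.} Your comparison with the constant $y^n$ extended by zero outside $\Omega$ requires $y^n\mathbb{1}_\Omega\in\Wspz$ to invoke Proposition~\ref{princcomp}, but $\mathbb{1}_\Omega\in W^{s,p}(\R^d)$ only when $sp<1$; for $sp\geq 1$ the Gagliardo seminorm diverges, so the comparison principle is not applicable as stated. The paper sidesteps this by testing the discrete equation with $\pow{\beta(v^n)}{r-1}$, using $\langle\pfrac v^n,\pow{\beta(v^n)}{r-1}\rangle\geq 0$, iterating in $n$ to obtain $\|\beta(v^n)\|_{L^r}\leq\|\beta(v_0)\|_{L^r}+\int_0^T\|h_\dt\|_{L^r}$, and then letting $r\to\infty$. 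This works for all $s,p$.

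\emph{Identifying the diffusion limit.} Minty's trick is correct but unnecessary here. Once Aubin--Simon and interpolation give $v_\dt\to v$ in $L^\infty(0,T;L^r(\Omega))$ and hence a.e.\ in $Q_T$, the map
\[
V_\dt(t,x,y)=\frac{\pow{v_\dt(t,x)-v_\dt(t,y)}{p-1}}{|x-y|^{(d+sp)/p'}}
\]
converges pointwise and is bounded in $L^\infty(0,T;L^{p'}(\R^{2d}))$; pointwise convergence plus $L^{p'}$-boundedness forces weak-$*$ convergence to the corresponding expression for $v$, and pairing with $(\phi(x)-\phi(y))/|x-y|^{(d+sp)/p}\in L^1(0,T;L^p(\R^{2d}))$ identifies $\int_0^t\langle\pfrac v_\dt,\phi\rangle\to\int_0^t\langle\pfrac v,\phi\rangle$ directly. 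This is what the paper does, avoiding the $\limsup$ energy manipulation your Minty argument would require.
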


\begin{proof}
We use a time-discretization method and we divide the proof in several steps.\\
\textbf{\underline{Step 1:}} Time-discretization scheme\\
Let $N\in \mathbb{N}^*$, we set $\dt=\frac{T}{N}$, $t_n=n\dt$, {for $1\leq n\leq N$}.\\
We define the sequence $(v^n)_{n} \subset L^{\frac{1}{m}+1}(\Omega)\cap \Wspz$ with $v^0=v_0$ and by the iterative scheme:
\begin{equation} \tag{$P_n$} \label{pbn}
    \begin{cases}
      \frac{1}{\dt}\beta(v^n)+ \pfrac v^n=h^n+\frac{1}{\dt}\betam(v^{n-1}) & \text{in} \;\Omega,\\
      v^n=0 & \text{in} \; \Rn \backslash \Omega,
    \end{cases}\
\end{equation}
where for any $x\in \Omega$
$$h^n(x):=\fint^{t_n}_{t_{n-1}} h_R(\tau ,x,v^{n-1}(x)) d\tau.$$

By Theorem \ref{exielli}, the sequence $(v^n)_{n}$  is well defined since $v_0$ and $h^n$ belong to $ L^\infty(\Omega)$. Furthermore, for any $n$, $v^n\in L^\infty(\Omega)$.\\
We define, for any $n=1,..., N$ on $[t_{n-1},t_n),$
$$h_\dt=h^n,\  v_\dt =v^n, \ \widetilde{ v}  _\dt=\dfrac{t-t_{n-1}}{\dt}(v^n-v^{n-1})+v^{n-1}$$
and
$$  \widetilde{ \beta (v)}  _\dt=\dfrac{t-t_{n-1}}{\dt}(\betam(v^n)-\betam(v^{n-1}))+\betam(v^{n-1}).$$

Then, we get $\partial_t \widetilde{\betam(v)}_\dt+\pfrac(v_\dt)=h_\dt,$ for {\it a.e.} $t\in (0,T)$, in sense of Definition \ref{solelli}.\\

\textbf{\underline{Step 2:}} $L^\infty$-bound

First, we prove that $(v_\dt)$ is uniformly bounded in $L^\infty(Q_T)$.\\
Since $v^n\in L^\infty(\Omega)$ and \eqref{ineg1} holds,  choosing $\phi=\pow{\beta(v^n)}{r-1}\in\Wspz$, for some $r\geq m$, as a test function in the weak formulation of Problem \eqref{pbn}, we obtain:
$$\frac{1}{\dt}\int_\Omega  (\betam(v^{n})-\betam(v^{n-1}))\phi\,dx+\langle(-\Delta)^s_pv^n,\phi\rangle\leq \|h^{n}\|_{L^{r}(\Omega)}\|\phi\|_{L^{\frac{r}{r-1}}(\Omega)}.$$
Since $\langle(-\Delta)^s_pv^n,\phi\rangle \geq 0$,  from above inequality and Hölder inequality, we get:

$$\|\beta(v^n)\|^r_{L^r(\Omega)}\leq  (\dt\|h^{n}\|_{L^{r}(\Omega)}+\|\beta(v^{n-1})\|_{L^{r}(\Omega)})\|\pow{\beta(v^n)}{r-1}\|_{L^{\frac{r}{r-1}}(\Omega)}.$$
Hence by \eqref{f1}, it follows
\begin{equation*}
\begin{split}
\|\beta(v^n)\|_{L^r(\Omega)}& \leq \dt\|h^{n}\|_{L^{r}(\Omega)}+\|\beta(v^{n-1})\|_{L^{r}(\Omega)}\leq\|\beta(v^{0})\|_{L^{r}}+\sum^n_{k=1} \dt\|h^{k}\|_{L^{r}(\Omega)},\\
&\leq \|\beta(v^{0})\|_{L^{r}(\Omega)}+\int_0^T\|h_\dt\|_{L^{r}(\Omega)},
\end{split}
\end{equation*}
%


passing to the limit as $r$ goes to $\infty$, we have that $(v_\dt)_\dt$ is bounded in $L^\infty(Q_T)$ uniformly in $\dt$. More precisely, we obtain
\begin{equation}\label{bdd}
\begin{split}
    \|\beta(v_\dt)\|_{L^\infty(Q_T)}& \leq \|\beta(v_0)\|_{L^\infty(\Omega)}+\int_0^T ||h_\dt ||_{L^\infty(\Omega)},\\
    &\leq \|\beta(v_0)\|_{L^\infty(\Omega)}+C_hT(1+R^{\frac{q}{m}}).
\end{split}
\end{equation}
\textbf{\underline{Step 3:}} A priori estimates

From the convexity of the mapping $v\mapsto \frac{1}{p}\|v\|_\Wspz^p$,  taking the test function $\phi = v^{n}-v^{n-1} \in  \Wspz\cap L^{\frac{1}{m}+1}(\Omega)$ {in the variational formulation of} problem \eqref{pbn}, we get
\begin{equation*}\label{3}
\begin{split}
\frac{1}{\dt}\int_\Omega (\beta(v^n)-\beta(v^{n-1}))(v^n-v^{n-1})\,&dx+\frac{1}{p}\|v^{n}\|_\Wspz^p\\
&\leq  \int_\Omega (v^n-v^{n-1})h^n\,dx+\frac{1}{p} \|v^{n-1}\|_\Wspz^p.
\end{split}
\end{equation*}
%
%
From Young inequality and since the mapping $x\mapsto \pow{x}{m}$ is locally Lipschitz and $(v^n)_n$ is uniformly bounded in $L^\infty(\Omega)$, it follows with some suitable constants $c_1$ and $c_2$
\begin{equation*}\label{12}
\begin{split}
\int_\Omega (v^{n}-v^{n-1}) h^{n}\,dx&\leq c_1 \dt\|h^{n}\|_{L^{2}}^2 +\frac{c_2}{\dt}\|v^{n}-v^{n-1}\|_{L^{2}}^2,\\
&\leq c_1\dt\|h^{n}\|_{L^{2}}^2 +
\frac{1}{2\dt} \int_\Omega (\betam(v^{n})-\betam(v^{n-1}))(v^{n}-v^{n-1})\,dx .
\end{split}
\end{equation*}
Combining the two previous inequalities  and summing from $n=1$ to $n=N'$ with $1\leq N'\leq N$, we get:
\begin{equation}\label{estiDis}
\begin{split}
\|v^{N'}\|^p_\Wspz+  & \frac{1}{\dt}\sum^{N'}_{n=1}  \int_\Omega (\betam(v^{n})-\betam(v^{n-1}))(v^{n}-v^{n-1})\,dx, \\
& \leq C(\|v_0\|_\Wspz^p+\sum^{N'}_{n=1}\dt \|h^n_R\|_{L^2(\Omega)}^2)\leq C(v_0,R,T),
\end{split}
\end{equation}
where the constant is independent of $\dt$. Hence we deduce from \eqref{estiDis} that the sequences $(v_\dt)_\dt$,  $(\widetilde{v}_\dt)_\dt$ are bounded in $L^\infty(0,T , \Wspz)$ and
$$\displaystyle \int_{0}^{T} \int_\Omega \left(\frac{v^{n}-v^{n-1}}{\dt}\right)^2\,dx dt\leq  \frac{c}{\dt}\sum^{N}_{n=1}  \int_\Omega (\betam(v^{n})-\betam(v^{n-1}))(v^{n}-v^{n-1})\,dx  \leq C$$
which gives that $ (\partial_t \widetilde{v}_\dt)_\dt$ is bounded in $L^2(Q_T)$ and 
\begin{equation}\label{11}
\sup_{[0,T]}\|\tilde{v}_\dt- v_\dt\|_{L^2(\Omega)}^2 \leq C\dt.
\end{equation}
\textbf{\underline{Step 4:}} Convergences as $\dt$ goes to $0$.\\
From \textbf{Step 3}, we have, up to a subsequence, $ v_\dt \overset{*}{\rightharpoonup} v_1$ and  $  \widetilde{v}_\dt \overset{*}{\rightharpoonup} v_2$ in $L^\infty (0,T, W_0^{s,p}(\Omega))$ as $\dt$ goes to $0$ and from \eqref{11}, we deduce $v:=v_1=v_2$.\\
Moreover the facts that $(\widetilde{v}_\dt)_\dt$ is bounded in $L^\infty(0,T;\Wspz)$ and $(\partial_t\widetilde{v}_\dt)_\dt$ is bounded in $L^2(Q_T)$ imply together with the fractional Sobolev embedding and the Aubin-Simon Theorem, up to a subsequence, $\tilde{v}_\dt$ tends to $v$ in $C([0,T],L^r(\Omega))$ for any  $r\in[1, \min(2, \frac{dp}{d-sp}))$.\\
Let $\tilde r\geq \min(2, \frac{dp}{d-sp}))>r $, by the interpolation inequality, we have for a suitable $\alpha=\alpha(\tilde r, r)\in(0,1)$:
\begin{equation*}
\begin{split}
\sup_{[0,T]}\|  \widetilde{v}_\dt - v \|_{L^{\tilde{r}}(\Omega)} & \leq \sup_{[0,T]} \| \widetilde{v}_\dt - v \|_{L^r(\Omega)}^\alpha \| \widetilde{v}_\dt - v  \|_{L^\infty(\Omega)}^{1-\alpha}, \\
& \leq C \sup_{[0,T]} \|\widetilde{v}_\dt - v  \|_{L^r(\Omega)}. 
\end{split}
\end{equation*}
Finally, $\tilde{v}_\dt$ tends to $v$ in $C([0,T],L^r(\Omega))$ for any $r\in [1, +\infty)$.\\
Again using \eqref{11}, we infer that as $\dt$ goes to $0$ : 
\begin{equation}\label{18}
v_\dt,\, v_\dt(\cdot-\dt) \to v \text{ in $L^\infty(0,T,L^r(\Omega))$ for any $r\in [1, +\infty)$.}
\end{equation}
Then, we have by applying dominated convergence theorem: for any $r\in [1,+\infty)$
\begin{equation*}\label{35}
h_R(\cdot,\cdot,v_\dt(\cdot-\dt))=h_R(\cdot,\cdot,v^{n-1}) \to h_R(\cdot,\cdot,v) \text{ in } L^r(Q_T) \mbox{ as }  \dt\to 0.
\end{equation*}
For $r\in [1,+\infty)$, we introduce the linear continuous operator $T_\dt$ from $L^r(Q_T)$ to itself, defined by  
\begin{equation*}
T_\dt f(t,x):=\displaystyle{\fint^{t_n}_{t_{n-1}}}f(\tau,x)d\tau\ \text{ for } (t,x)\in [t_{n-1},t_n)\times \Omega.
\end{equation*} 
 Hence, noting that $h_\dt=T_\dt h_R(\cdot,\cdot,v^{n-1})$ and $T_\dt f$ tends to $f \text{ in } L^{r}(Q_T)$  as $\dt$ goes to $0$, 
 we have 
 \begin{equation*}
\begin{split}
\|h_\dt -&h_R(\cdot,\cdot,v)\|_{L^r(Q_T)}\\
&\leq \|h_\dt -T_\dt h_R(\cdot,\cdot,v)\|_{L^r(Q_T)}+ \|h_R(\cdot,\cdot,v) -T_\dt h_R(\cdot,\cdot,v) \|_{L^r(Q_T)}\\ 
& \leq \|h_R(\cdot,\cdot,v^{n-1}) - h_R(\cdot,\cdot,v)\|_{L^r(Q_T)}+ \|h_R(\cdot,\cdot,v) -T_\dt h_R(\cdot,\cdot,v) \|_{L^r(Q_T)}.
\end{split}
\end{equation*}
Thus, for any $r\geq 1$, $h_\dt$ tends to $h_R(\cdot,\cdot , v)$ in $L^r(Q_T)$.\\
Finally, we prove the convergence  of the term $(\widetilde{\betam(v)}_\dt)_{\dt}$. Since $\beta$ is  globally $\frac1m$-H\"older,  we have firstly 
from \eqref{18},
\begin{equation}\label{22}
\beta(v_\dt)\to \beta (v)\text{ {\it a.e.} in $Q_T$ and in }L^\infty(0,T,L^r(\Omega)) \text{ for any }r\in [1,+\infty)
\end{equation}
and secondly for suitable constant $c>0$:
\begin{equation*}
\max_{n=1,..,N} \frac{1}{\dt} \int_\Omega |\betam(v^{n})-\betam(v^{n-1})|^{m+1}\,dx\leq c\sum^{N}_{n=1} \frac{1}{\dt} \int_\Omega (\betam(v^{n})-\betam(v^{n-1}))(v^{n}-v^{n-1})
\end{equation*}
which implies from \eqref{estiDis}
\begin{equation}\label{21}
\sup_{[0,T]}\|\widetilde{ \betam (v)}_\dt-\betam(v_\dt)\|_{L^{m+1}}^{m+1}\leq C \dt.
\end{equation}
Finally, it follows from \eqref{22} and \eqref{21} that
\begin{equation*}
\widetilde{ \betam (v)}_\dt\to \beta (v)\text{ in }C([0,T],L^r(\Omega)) \text{ for any }r\in [1,+\infty).
\end{equation*}

Passing to the limit as $\dt$ goes to $0$, we obtain that $v\in X_T$ and $v(0)=v_0$ {\it a.e.} in $\Omega$.\\
Since $(v_\dt)$ is bounded in $L^\infty(0,T;\Wspz)$, then

$$V_\dt: (t,x,y) \mapsto \frac{\pow{v_\dt(x)-v_\dt(y)}{p-1}}{|x-y|^{(d+sp)/p'}},$$

is bounded in $L^\infty(0,T;L^{p'}(\mathbb{R}^{2d}))$ uniformly in $\dt$. From the pointwise convergence and weak compactness, we get:

$$V_\dt\overset{*}{\rightharpoonup} \frac{\pow{v(x)-v(y)}{p-1}}{|x-y|^{(d+sp)/p'}} \text{ in } L^\infty(0,T;L^{p'}(\mathbb{R}^{2d})).$$

Finally, for any $\phi \in  L^1(0,T;\Wspz)$, $\frac{\phi(x)-\phi(y)}{|x-y|^{(d+sp)/p}}$ belongs to $ L^1(0,T;L^{p}(\mathbb{R}^{2d})).$ Then, we deduce:
\begin{equation*}\label{24}
\int_0^t \langle\pfrac(v_\dt),\phi\rangle\,d\tau \to \int_0^t \langle\pfrac v,\phi\rangle\,d\tau.
\end{equation*}

\textbf{\underline{Step 5:}} $v$ verifies \eqref{fv2}.\\
By \textbf{Step 2} and \textbf{Step 3}, $\widetilde{\betam(v)}_\dt\in \mathcal W_T\cap L^\infty(Q_T)$, hence applying  Lemma \ref{ipp} for any $\phi \in \mathcal W_T$, we get
$$\int_0^t \int_\Omega \partial_t \widetilde{\betam(v)}_\dt\phi\,dxd\tau=-\int_0^t \int_\Omega  \widetilde{\betam(v)}_\dt\partial_t\phi\,dxd\tau +\bigg[\int_\Omega\widetilde{\betam(v)}_\dt\phi\,dx\bigg]^t_0.$$
Then 
\begin{equation*}
    \begin{split}
\bigg[\int_\Omega\widetilde{\betam(v)}_\dt\phi\,dx\bigg]^t_0-\int_0^t \int_\Omega  \widetilde{\betam(v)}_\dt\partial_t\phi\,dxd\tau+\int_0^t &\langle\pfrac v_\dt,\phi\rangle\,d\tau\\
&=\int_0^t\int_\Omega h_\dt\phi\,dxd\tau,
\end{split}
\end{equation*}
for any $\phi \in\mathcal W_T\cap L^1(0,T;\Wspz)$.\\
Finally, thanks to \textbf{Step 4}, we pass to the limit in the previous equality and we conclude  \eqref{fv2}.
\end{proof}	

\begin{remark}
Assuming $h$ and $v_0$ are nonnegative, Theorem \ref{princcomp} implies that, for any $n$, $v^n$, given by \eqref{pbn}, is nonnegative and thus $v_\dt$ also. Hence, passing to the limit as $\dt$ goes to $0$, we have $v\geq 0$.
\end{remark}

\begin{corollary}\label{coro1}
Let $v_0\in L^\infty(\Omega) \cap \Wspz$ and assume \eqref{f1} for some $q>0$.
Then, there exists $T^\star\in (0,+\infty]$ such that for any $T<T^\star$, \eqref{pbh} admits a weak solution on $Q_T$.\\
Moreover, assume in addition that $q\leq 1$ then there exists $v_{glob}\in L^\infty_{loc}(0, \infty;L^\infty(\Omega))$ such that, for any ${T}<\infty$, $v_{glob}$ is a weak solution of \eqref{pbh} on $Q_{T}$.
\end{corollary}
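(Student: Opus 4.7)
The plan is to leverage Theorem \ref{thexi1} applied to the truncated problem \eqref{pbpsi} and to select $R$ large enough so that the resulting weak solution $v_R$ never reaches the truncation threshold; then $h_R(\cdot,\cdot,v_R)=h(\cdot,\cdot,v_R)$ and $v_R$ becomes a genuine weak solution of \eqref{pbh}. From the $L^\infty$-estimate \eqref{bdd}, $\|\beta(v_R)\|_{L^\infty(Q_T)} \leq \|\beta(v_0)\|_{L^\infty(\Omega)} + C_h T(1+R^{q/m})$, and since $|v_R|=|\beta(v_R)|^m$, I obtain
\[
\|v_R\|_{L^\infty(Q_T)} \leq \bigl(\|\beta(v_0)\|_{L^\infty(\Omega)}+C_h T(1+R^{q/m})\bigr)^m.
\]
For the first assertion, given any $q>0$, I would fix $R$ with $R^{1/m}>\|\beta(v_0)\|_{L^\infty(\Omega)}+1$ and then choose $T$ small enough that $C_h T(1+R^{q/m})<1$; this forces $\|v_R\|_{L^\infty(Q_T)}\leq R$, the truncation is inactive, and $v_R$ solves \eqref{pbh} on $Q_T$. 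Letting $T^\star$ be the supremum of $T>0$ for which such a weak solution exists then gives $T^\star>0$.

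For the global statement under $q\leq 1$, the key observation is that \eqref{f1} yields an $R$-independent pointwise bound $|h_R(t,x,\theta)|\leq C_h(1+\min(|\theta|,R)^{q/m})\leq C_h(1+|\theta|^{q/m})$. Revisiting the telescoping argument that leads to \eqref{bdd} in the proof of Theorem \ref{thexi1} with this improved bound, I get
\[
\|\beta(v^n)\|_{L^\infty(\Omega)} \leq \|\beta(v_0)\|_{L^\infty(\Omega)} + C_h\dt\sum_{k=1}^{n}\bigl(1+\|\beta(v^{k-1})\|_{L^\infty(\Omega)}^q\bigr).
\]
Using the elementary inequality $a^q\leq 1+a$, valid for $a\geq 0$ and $q\in(0,1]$, together with discrete Gronwall, I then derive an $R$-independent bound of the form $\|\beta(v^n)\|_{L^\infty(\Omega)}\leq M(T)$ for every $n$ with $n\dt\leq T$. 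Passing to the limit in $\dt$ yields $\|v_R\|_{L^\infty(Q_T)}\leq M(T)^m$, so choosing $R>M(T)^m$ deactivates the truncation and $v_R$ solves \eqref{pbh} on $Q_T$ for every $T>0$.

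To assemble a single $v_{glob}$ defined on $(0,+\infty)$, I pick a sequence $T_k\uparrow+\infty$ and take the associated solutions $v^{(k)}$ on $Q_{T_k}$ produced above. The $R$-independent right-hand side estimate $\|h(\cdot,\cdot,v^{(k)})\|_{L^\infty}\leq C_h(1+M(T)^q)$ on $Q_T$ makes the energy-type bounds of Step 3 of the proof of Theorem \ref{thexi1} uniform in $k$ on each fixed $[0,T]$: $v^{(k)}$ is bounded in $L^\infty(0,T;\Wspz)$ and $\partial_t v^{(k)}$ in $L^2(Q_T)$, independently of $k$. A diagonal extraction combined with the compactness/convergence toolbox of Step 4 of that same proof (Aubin--Simon for a.e. convergence, weak convergence of the fractional $p$-Laplacian obtained from pointwise convergence and weak $L^{p'}$-compactness) then delivers $v_{glob}\in L^\infty_{\mathrm{loc}}(0,+\infty;L^\infty(\Omega))$ which is a weak solution of \eqref{pbh} on every $Q_T$.

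The main obstacle is obtaining the $R$-independent a priori $L^\infty$ bound required both to switch off the truncation and to glue solutions across increasing time intervals. The inequality $a^q\leq 1+a$ linearizes the Gronwall recursion for $q\leq 1$, giving the global-in-time bound; for $q>1$ this linearization breaks down and only local existence can be obtained, which is consistent with the finite-time blow-up phenomenon later established in Theorem \ref{GB3}.
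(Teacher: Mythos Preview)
Your argument is correct, and for the local existence part it coincides with the paper's: pick $R$ so that the a~priori bound \eqref{bdd} forces $\|v_R\|_{L^\infty(Q_T)}\le R$, which deactivates the truncation. The paper makes $T^\star$ explicit via
\[
T^\star=\sup_{\sigma>0}\frac{\sigma^{1/m}-\|v_0\|_{L^\infty(\Omega)}^{1/m}}{C_h(1+\sigma^{q/m})},
\]
whereas you define it abstractly as a supremum of existence times; both give $T^\star>0$.

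For the global part under $q\le1$ your route is genuinely different. The paper exploits that the explicit $T^\star$ above is \emph{independent of the initial data} when $q\le1$ (indeed $T^\star=\infty$ for $q<1$ and $T^\star=1/C_h$ for $q=1$), so one can fix $T_o\in(0,1/C_h)$, solve on $[0,T_o]$, restart from $v(T_o)\in\Wspz\cap L^\infty(\Omega)$, and concatenate: $v_{glob}:=v_{n+1}(\cdot-nT_o)$ on $[nT_o,(n+1)T_o]$. No compactness is needed at the gluing stage. You instead feed the elementary bound $a^q\le1+a$ into the telescoped $L^\infty$-estimate to obtain, via discrete Gronwall, an $R$-independent bound $\|\beta(v_\dt)\|_{L^\infty(Q_T)}\le M(T)$ for \emph{every} $T$; this yields a solution $v^{(k)}$ on each $Q_{T_k}$, and you then glue by diagonal extraction and the Step~4 compactness machinery. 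Your approach has the advantage of producing an explicit growth rate $M(T)\sim e^{CT}$, but it is heavier: you must transfer the Step~3 energy bounds from the discrete level to the limit solutions $v^{(k)}$ (via weak lower semicontinuity) and then re-run the Step~4 convergence argument for a sequence of weak solutions rather than discretizations. The paper's concatenation sidesteps all of this by noticing that the existence window never shrinks.
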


\begin{proof}
Let $q>0$ and $T>0$. Let $v$ be the solution obtained by Theorem \ref{thexi1} on $Q_{T}$.
By {pointwise} convergence and \eqref{bdd}, we have
$$\|\beta(v)\|_{L^\infty(\Omega)}\leq \|\beta(v_0)\|_{L^\infty(\Omega)}+C_hT(1+R^\frac{q}{m}):=P(R).$$
 We define
\begin{equation}\label{tstar}
T^\star=\sup_{\sigma>0} \frac{\sigma^\frac{1}{m}-\|v_0\|^\frac{1}{m}_{L^\infty(\Omega)}}{C_h(1+\sigma^\frac{q}{m})}\in (0,+\infty].
\end{equation}
By definition of $T^\star$, for any $T<T^\star$, there exists $\tilde R>0$ such that $P(\tilde R)<\tilde R^\frac{1}{m}$ .\\
Hence, with this suitable choice of $\tilde R$, we deduce that $h_{\tilde R}(\cdot,\cdot,v)=h(\cdot,\cdot,v)$ {\it a.e.} in $Q_T$ and $v$ is a weak solution of \eqref{pbh} in $Q_T$.\\
Furthermore, for $q=1$, {$T^*=\frac{1}{C_h}$ and for $q<1$, $T^\star=\infty$}.
Thus, for $q\in (0,1]$ fixed,  we choose $T_o\in (0,\frac{1}{C_h})$ (independent of $v_0$) such that for $R_1$ large enough, $P(R_1)< R_1$. Then, Theorem \ref{thexi1} and arguments above give a weak solution $v_1$ of \eqref{pbh} on {$Q_{T_o}$}.\\
Since $v_1\in X_{T_o}$,  $v_1(T_o)\in L^\infty (\Omega)\cap \Wspz$, thus we apply first Theorem \ref{thexi1}  on $Q_{T_o}$ with the initial data $v_1(T_o)$ and $h(\cdot+T_o,\cdot,\cdot)$  and next we apply the first part of Corollary \ref{coro1} with a suitable $R_2$ large enough to obtain the existence of a weak solution  $v_2$  of $(P_h)$ on $Q_{T_o}$.\\
Hence, by induction argument, we build a sequence of solution $v_n$ of \eqref{pbh} for the data $v_n(0)=v_{n-1}(T)$ and  $h(\cdot+nT,\cdot,\cdot)$. \\
Finally, defining $v_{glob}:= v_{n+1}(\cdot-nT) $ on $[nT,(n+1)T]\times \Omega$, we have $v_{glob}\in L^\infty_{loc}(0, \infty;L^\infty(\Omega))$ such that, for any $\tilde{T}<\infty$, $v_{glob}\in X_{\tilde T}$ is a weak solution of \eqref{pbh} on $Q_{\tilde T}$.
\end{proof}

\subsection{Properties}\label{subsection 3.2}
In this subsection, we establish several properties of weak-mild solutions. {We first consider $h\in L^\infty (Q_T)$ depending only on $x$ and $t$. In this case, the condition \eqref{f1} is not required and applying Theorem \ref{thexi1}, existence of a weak solution to \eqref{pbh} holds for any $T>0$. We begin by proving uniqueness and contraction property:}

\begin{proposition}\label{mildcor}
Let $h\in L^\infty(Q_T)$ and $v$ be the solution of \eqref{pbh} obtained in Theorem \ref{thexi1}, then $v$ is the unique weak-mild solution of \eqref{pbh}. 
\end{proposition}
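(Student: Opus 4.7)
The strategy is to identify the time-discrete iterates produced by the scheme \eqref{pbn} in the proof of Theorem \ref{thexi1} with the $\varepsilon$-approximate solutions of $(P_0)$ from Definition \ref{defapp}, and then to invoke the $L^1$-accretivity of the operator $A\colon u\mapsto \pfrac\pow{u}{m}$ (established via Proposition \ref{propdom} and the density corollary, together with the accretivity proved in Appendix B) to conclude through Crandall--Liggett theory.

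First I would reinterpret \eqref{pbn} in terms of $u^n:=\beta(v^n)=\pow{v^n}{1/m}$. Since $\beta$ is a bijection on $\mathbb R$, $v^n=\pow{u^n}{m}$ and the scheme rewrites as
\begin{equation*}
\frac{u^n-u^{n-1}}{\Delta t}+A u^n=h^n\ \text{in }\Omega,\qquad u^n=0\ \text{in }\Rn\setminus\Omega,\qquad u^0=\beta(v_0),
\end{equation*}
i.e.\ it is precisely the implicit Euler discretization of the abstract Cauchy problem $u'+Au=h$, $u(0)=\beta(v_0)$, on the uniform grid $t_n=n\Delta t$. The corresponding piecewise-constant step function $U_{\Delta t}(t):=u^n$ for $t\in(t_{n-1},t_n]$ is then a $\Delta t$-approximate solution in the sense of Definition \ref{defapp}, up to the $L^1(Q_T)$ source defect $\|h_{\Delta t}-h\|_{L^1(Q_T)}$, which vanishes as $\Delta t\to 0$ because $h\in L^\infty(Q_T)$.

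Next, the accretivity of $A$ in $L^1(\Omega)$ combined with the density of $D(A)$ in $L^1(\Omega)$ places us within the classical Crandall--Liggett generation theorem: the scheme defines iterates of the resolvent of $A$ (with an affine perturbation by $h^n$), and the associated step functions $U_{\Delta t}$ converge in $C([0,T];L^1(\Omega))$ to the unique mild solution $u^\star$ of $(P_0)$ with source $h$ and initial datum $\beta(v_0)$. On the other hand, Step 4 of Theorem \ref{thexi1} gives $\beta(v_{\Delta t})\to\beta(v)$ in $C([0,T];L^r(\Omega))$ for every finite $r$, and in particular in $L^1(\Omega)$ uniformly in $t$. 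Since by construction $U_{\Delta t}=\beta(v_{\Delta t})$, uniqueness of the limit forces $\beta(v)=u^\star$, so $\beta(v)$ is the mild solution of $(P_0)$ with $f=h$ and therefore $v$ is a weak-mild solution of \eqref{pbh}.

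Finally, for uniqueness: if $\tilde v$ is any weak-mild solution of \eqref{pbh}, then by Definition \ref{wmildsol} the function $\beta(\tilde v)$ is a mild solution of $(P_0)$ with the same source $h$ and the same initial datum $\beta(v_0)$, hence $\beta(\tilde v)=\beta(v)$ by uniqueness of mild solutions for $m$-accretive operators; injectivity of $\beta$ then yields $\tilde v=v$. The only delicate point I expect to handle with care is the verification that the uniform-step implicit scheme is genuinely an $\varepsilon$-approximate solution, i.e.\ controlling the discrepancy induced by replacing $h$ by its piecewise time-average $h^n$; this is routine under $h\in L^\infty(Q_T)$, but it is the step one has to spell out to legitimately invoke the convergence statement of Crandall--Liggett.
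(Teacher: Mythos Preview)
Your proposal is correct and follows essentially the same approach as the paper: both recognize that the discrete iterates $\beta(v_{\Delta t})$ furnish $\varepsilon$-approximate solutions of $(P_0)$ with source $h$, then invoke the $L^1$-accretivity of $A$ (Theorem \ref{accretif}) together with $\beta(v_0)\in\overline{D(A)}^{L^1}$ (Proposition \ref{propdom}) to deduce that $\beta(v)$ is the unique mild solution. The only stylistic difference is that the paper verifies the definition of mild solution directly (using the convergence already established in Step~4 of Theorem \ref{thexi1}) and cites \cite[Th.~4.1]{barbu2} for uniqueness, whereas you route through the Crandall--Liggett convergence statement; note that uniqueness of mild solutions requires only accretivity, not $m$-accretivity, so your final sentence is slightly overstated but harmless here.
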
 

\begin{proof}

Let $\epsilon>0$ then, for $\dt$ small enough, $\beta (v_\dt)$ is an $\epsilon$-approximation solution. \\
From Theorem \ref{accretif}, the operator $A:u\mapsto\pfrac \pow{u}{m}$ is accretive in $L^1(\Omega)$ and from Proposition \ref{propdom}, $\beta(v_0)\in \overline{D(A)}^{L^1(\Omega)}$, then using {\cite[Th. 4.1]{barbu2}}, we have the uniqueness of the mild solution of \eqref{pbo} for $h\in L^\infty(Q_T)$ and hence the uniqueness of the weak-mild solution of \eqref{pbh}.
\end{proof}

\begin{remark}\label{rk3}
Due to the uniqueness of the mild solution, we consider in the sequel the weak-mild solution which is the limit of the discretized solutions.
\end{remark}

\begin{proposition}\label{estimations2}
Let $T>0$ and let $u$, $v$ be two weak-mild solutions of \eqref{pbh} with right-hand side respectively $h=f\in {L^\infty(Q_T)}, $  $h=g\in L^\infty(Q_T)$ and initial data $u_0,\,v_0\in { \Wspz\cap L^{\infty}(\Omega)}$. Then, we have 
\begin{equation}\label{L1contr2}
\sup_{[0,T]}\|\beta(u)-\beta(u)\|_{L^1(\Omega)}\leq \|\beta(u_0)-\beta(v_0)\|_{L^1(\Omega)}+\int_0^T\|f-g\|_{L^1(\Omega)}\,dt.
\end{equation}
Moreover, setting $w=u$ or $v$ and $h=f$ or $g$, $w$ is weakly continuous from $[0,T]$ to $\Wspz$ and satisfies
\begin{equation}\label{bd1}
\|\beta(w)\|_{L^\infty(Q_T)}\leq \|\beta(w_0)\|_{L^\infty(\Omega)}+T\|h\|_{L^\infty(Q_T)}
\end{equation} 
and there exists $C>0$ such that:
\begin{equation}\label{energyineq}
\begin{split}
C\int_{t_0}^{t_1} \int_\Omega  \Big|\partial_t\pow{w}{\frac{m+1}{2m}}\Big|^2\,dxdt+&\frac{1}{p}\|w(t_1)\|^p_\Wspz \\
&\leq \frac{1}{p}\|w(t_0)\|^p_\Wspz +\int_{t_0}^{t_1}\int_\Omega h \partial_t w\,dxdt,
\end{split}
\end{equation}
for any $t_0,t_1\in [0,T]$ such that $t_1 \geq t_0$.
\end{proposition}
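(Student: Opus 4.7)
My plan is to derive the four assertions from the time-discretization scheme and convergences of Theorem \ref{thexi1}, together with the $L^1$-accretivity of $A = \pfrac\pow{\cdot}{m}$ (Theorem \ref{accretif}). For the $L^1$-contraction \eqref{L1contr2}, since $f, g \in L^\infty(Q_T)$ are independent of the unknowns, $\beta(u)$ and $\beta(v)$ are mild solutions of \eqref{pbo} with sources $f$ and $g$ respectively, so the classical $L^1$-contraction property of mild solutions for $m$-accretive operators (see \cite[Prop.~4.1]{barbu2}) applies directly. A discrete alternative is to subtract the iterative schemes \eqref{pbn} for $u^n$ and $v^n$, test against a smooth approximation of $\mathrm{sign}(u^n - v^n)$, and use monotonicity of $\beta$ with the $T$-accretivity of $\pfrac$ before summing and letting $\dt \to 0$.

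The $L^\infty$-bound \eqref{bd1} follows by passing to the limit in the discrete bound \eqref{bdd} (whose derivation used only boundedness of $h$) via the pointwise convergence $\beta(v_\dt) \to \beta(w)$ {\it a.e.} from Step 4 of Theorem \ref{thexi1}. For the weak continuity of $w : [0,T] \to \Wspz$, I use that $w \in L^\infty(0,T;\Wspz) \cap C([0,T];L^r(\Omega))$ for any $r \in [1,\infty)$: given $t_n \to t_0$, the sequence $(w(t_n))$ is bounded in the reflexive space $\Wspz$, and any weak cluster point must coincide with $w(t_0)$ via strong $L^r$-continuity and compact embedding, forcing weak convergence of the whole sequence.

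The main obstacle is the energy inequality \eqref{energyineq}. The essential pointwise inequality, valid for all $a, b \in \R$, is
\begin{equation*}
(\beta(b) - \beta(a))(b - a) \geq \frac{4m}{(m+1)^2}\bigl(\pow{b}{(m+1)/(2m)} - \pow{a}{(m+1)/(2m)}\bigr)^2,
\end{equation*}
which I verify by Cauchy--Schwarz applied to $\int_a^b g'(x)\,dx$ with $g(x) = \pow{x}{(m+1)/(2m)}$, using the identity $(g'(x))^2 = \frac{(m+1)^2}{4m}\beta'(x)$. Testing \eqref{pbn} against $v^n - v^{n-1} \in \Wspz \cap L^{1/m+1}(\Omega)$ and invoking convexity of $v \mapsto \frac{1}{p}\|v\|_\Wspz^p$ as in Step 3 of Theorem \ref{thexi1}, but \emph{without} applying Young's inequality on the right-hand side, gives a discrete identity; summing between grid times $t_{n_0} \leq t_{n_1}$ enclosing $t_0, t_1$ and applying the above inequality yields, with $\widetilde U_\dt$ the piecewise linear interpolant of $\pow{v^n}{(m+1)/(2m)}$,
\begin{equation*}
\frac{4m}{(m+1)^2}\int_{t_{n_0}}^{t_{n_1}}\!\!\int_\Omega |\partial_t \widetilde U_\dt|^2\,dxdt + \frac{1}{p}\|v_\dt(t_{n_1})\|_\Wspz^p \leq \frac{1}{p}\|v_\dt(t_{n_0})\|_\Wspz^p + \int_{t_{n_0}}^{t_{n_1}}\!\!\int_\Omega h_\dt \partial_t \widetilde v_\dt\,dxdt.
\end{equation*}

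To pass to the limit $\dt \to 0$, the source term converges thanks to strong convergence $h_\dt \to h$ in $L^2(Q_T)$ combined with weak convergence $\partial_t \widetilde v_\dt \rightharpoonup \partial_t w$ in $L^2(Q_T)$ from Steps 3--4 of Theorem \ref{thexi1}; the upper-endpoint $\Wspz$-norm is handled by weak lower semicontinuity via the weak $\Wspz$-continuity of $w$; and the $L^2$-norm of $\partial_t \widetilde U_\dt$ is treated by identifying its weak $L^2$-limit with $\partial_t \pow{w}{(m+1)/(2m)}$, via an Aubin--Simon compactness argument on $\widetilde U_\dt$ (bounded in $H^1(0,T;L^2) \cap L^\infty(Q_T)$ by the inequality itself together with \eqref{bd1}) combined with strong convergence $v_\dt \to w$ in $L^r$ and continuity of $x \mapsto \pow{x}{(m+1)/(2m)}$, followed by lower semicontinuity of the $L^2$-norm. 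For arbitrary $t_0 \leq t_1 \in [0,T]$ the inequality then follows by approximating them with grid times, exploiting the weak $\Wspz$-continuity of $w$ and the absolute continuity in $t$ of the remaining integrals.
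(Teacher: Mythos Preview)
Your treatment of \eqref{L1contr2}, \eqref{bd1}, the weak $\Wspz$-continuity, and the bulk of the energy inequality (discrete testing with $v^n-v^{n-1}$, the pointwise bound on $(\beta(b)-\beta(a))(b-a)$, and the passage to the limit at the upper endpoint via weak lower semicontinuity) is essentially the paper's argument; your Cauchy--Schwarz derivation of the pointwise inequality is a clean alternative to the paper's use of Property~\ref{inegalg}.

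There is, however, a genuine gap in your last step. To obtain \eqref{energyineq} for an arbitrary lower endpoint $t_0>0$ you propose to approximate $t_0$ by grid times $t_{n_0}$ and invoke the weak $\Wspz$-continuity of $w$. But weak continuity only gives $\|w(t_0)\|_{\Wspz}\leq\liminf\|v_\dt(t_{n_0})\|_{\Wspz}$, which is the \emph{wrong} direction: the term $\frac{1}{p}\|w(t_0)\|_{\Wspz}^p$ sits on the right-hand side as an upper bound, so you would need $\limsup\|v_\dt(t_{n_0})\|_{\Wspz}\leq\|w(t_0)\|_{\Wspz}$, and nothing in the convergences of Theorem~\ref{thexi1} delivers this (for $t_0=0$ the issue disappears only because $v_\dt(0)=w_0$ exactly). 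The paper circumvents this cleanly: it first establishes \eqref{energyineq} only for $t_0=0$, and then observes that, by Proposition~\ref{mildcor}, the time-shifted function $\tilde w(s):=w(s+t_0)$ is the unique weak-mild solution of \eqref{pbh} with initial datum $w(t_0)\in\Wspz\cap L^\infty(\Omega)$ and right-hand side $h(\cdot+t_0,\cdot)\in L^\infty(Q_{T-t_0})$; applying the already-proved $t_0=0$ inequality to $\tilde w$ yields \eqref{energyineq} for arbitrary $t_0$. This uniqueness-plus-shift device is the missing idea in your proposal.
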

\begin{proof}
By remark \ref{rk3}, we consider the solutions obtained by Theorem \ref{thexi1}.\\
Let $u_{\dt}$, $v_{\dt}$ be the solutions defined by \eqref{pbn} associated to data $(f,u_0)$ and $(g,v_0)$ respectively. \\
From $L^1$-accretivity of $A$, we get:
\begin{equation*}
\begin{split}
\|\beta(u^{n})-\beta(v^{n})\|_{L^1(\Omega)} & \leq \|\beta(u^{n-1})-\beta(v^{n-1})\|_{L^1(\Omega)}+\dt\|f^n-g^n\|_{L^1(\Omega)},\\
& \leq \|\beta(u_0)-\beta(v_0)\|_{L^1(\Omega)} +\|T_\dt f-T_\dt  g\|_{L^1(Q_T)},\\
& \leq \|\beta(u_0)-\beta(v_0)\|_{L^1(\Omega)} +\| f-g\|_{L^1(Q_T)}.
\end{split}
\end{equation*}
From \eqref{22} and passing to the limit as $\dt$ goes to $0$, we conclude \eqref{L1contr2}.\\
For the end of the proof, we take $w=v$. We now prove \eqref{bd1}. First, inequality \eqref{bdd} stands with $g\in L^\infty(Q_T)$ as
$$\|\beta(v_\dt)\|_{L^\infty(Q_T)} \leq \|\beta(v_0)\|_{L^\infty(\Omega)}+T\|g\|_{L^\infty(Q_T)}.$$
Using again  \eqref{22} and passing to the limit, we obtain \eqref{bd1}.


Finally we prove the energy inequality \eqref{energyineq}. Since the solution  $v$ belongs to $C([0,T];L^r(\Omega))$ for any $r\in [1,+\infty)$ and to $L^\infty(0,T;\Wspz)$, the compact embedding $\Wspz \hookrightarrow  L^p(\Omega)$ entails that $v$ is weakly continuous from $[0,T]$ to $\Wspz$ and 
\begin{equation}\label{14}
\|v(t_0)\|_\Wspz \leq \liminf_{t\to t_0}\|v(t)\|_\Wspz.
\end{equation}
Using \eqref{ineg3} we have:
$$|\beta(v^n)-\beta(v^{n-1})|\geq C\big|\pow{v^n}{\frac{m+1}{2m}}-\pow{v^{n-1}}{\frac{m+1}{2m}}\big|\big(|\pow{v^{n}}{\frac{m+1}{2m}}|+|\pow{v^{n-1}}{\frac{m+1}{2m}}|\big)^{p_1-2}$$
and
$$|v^n-v^{n-1}|\geq C\big|\pow{v^n}{\frac{m+1}{2m}}-\pow{v^{n-1}}{\frac{m+1}{2m}}\big|\big(|\pow{v^{n}}{\frac{m+1}{2m}}|+|\pow{v^{n-1}}{\frac{m+1}{2m}}|\big)^{p_2-2},$$

where $p_1=\frac{2}{m+1}+1$ and $ p_2=\frac{2m}{m+1}+1$. We have $p_1+p_2-4=0$ and so

\begin{equation}\label{13}
(\beta(v^n)-\beta(v^{n-1}))(v^n-v^{n-1})\geq C\big|\pow{v^n}{\frac{m+1}{2m}}-\pow{v^{n-1}}{\frac{m+1}{2m}}\big|^2.
\end{equation}
We now set:
$$ \widetilde{\nu}  _\dt=\dfrac{t-t_{n-1}}{\dt}(\pow{v^n}{\frac{m+1}{2m}}-\pow{v^{n-1}}{\frac{m+1}{2m}})+\pow{v^{n-1}}{\frac{m+1}{2m}} \text{ on } [t_{n-1},t_n[.$$
Following \textbf{Step 3} of the proof of Theorem \ref{thexi1} and using \eqref{13}, we obtain:

\begin{equation}\label{25}
\partial_t \widetilde{\nu}_\dt \rightharpoonup \partial_t\pow{v}{\frac{m+1}{2m}} \text{ in } L^2(Q_T).
\end{equation}

Using again \textbf{Step 3} of the proof of Theorem \ref{thexi1}, \eqref{14}
-\eqref{25} we have for $t\in[0,T]$ such that $t^{N'}$ tends to $t$:

$$C\int_0^t \int_\Omega  \Big|\partial_t\pow{v}{\frac{m+1}{2m}}\Big|^2\,dxd\tau+\frac{1}{p}\|v(t)\|^p_\Wspz \leq \frac{1}{p}\|v_0\|^p_\Wspz +\int_0^t\int_\Omega f \partial_tv\,dxd\tau.$$

Setting $\tilde{v}=v(\cdot+t_0)$ and from the uniqueness of mild solution, we have \eqref{energyineq} for any $t_0\in [0,T)$.
\end{proof}

\begin{proposition}\label{mildth}
Let $T>0$. Let $u$ be the solution of \eqref{pbh} obtained by  Theorem \ref{thexi1}  with right-hand side $h=f\in {L^\infty(Q_T)}\cap  W^{1,1}([0,T];L^1(\Omega)) $ and initial data $u_0\in { \Wspz\cap L^{\infty}(\Omega)}$ verifying $\beta(u_0) \in D(A)$. Then, $\beta (u)\in W^{1,\infty}(0,T;L^1(\Omega))$ and $u$ is a strong solution in sense of Definition \ref{defsol1}.
\end{proposition}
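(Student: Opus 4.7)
\medskip

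\textbf{Proof plan.} The strategy is to revisit the time discretization scheme \eqref{pbn} used in the proof of Theorem \ref{thexi1}, and exploit the $L^1$-accretivity of $A$ (Theorem \ref{accretif}) together with the extra regularity $\beta(u_0)\in D(A)$ and $f\in W^{1,1}([0,T];L^1(\Omega))$ to obtain a uniform-in-$\dt$ Lipschitz-in-time estimate for $\beta(u_\dt)$ in $L^1(\Omega)$. Setting $U^n=\beta(u^n)$, the discrete scheme reads $U^n+\dt A U^n=U^{n-1}+\dt f^n$ where $f^n(x)=\fint_{t_{n-1}}^{t_n} f(\tau,x)\,d\tau$. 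Comparing two consecutive steps and using the $L^1$-accretivity of $A$, I expect
\begin{equation*}
\|U^{n+1}-U^{n}\|_{L^1(\Omega)}\le \|U^{n}-U^{n-1}\|_{L^1(\Omega)}+\dt\,\|f^{n+1}-f^n\|_{L^1(\Omega)}.
\end{equation*}

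For the starting step, I would exploit $\beta(u_0)\in D(A)$. Writing $U^1-U^0+\dt(AU^1-AU^0)=\dt(f^1-AU^0)$ and applying once more the $L^1$-accretivity gives $\|U^1-U^0\|_{L^1(\Omega)}\le \dt\bigl(\|f^1\|_{L^1(\Omega)}+\|A\beta(u_0)\|_{L^1(\Omega)}\bigr)$, a bound which is linear in $\dt$ thanks to the assumption $\beta(u_0)\in D(A)$ (recall $A\beta(u_0)=\pfrac u_0\in L^1(\Omega)$). Iterating the one-step contraction, and using the elementary bound $\sum_k \dt\|f^{k+1}-f^k\|_{L^1(\Omega)}\le \|\partial_t f\|_{L^1(Q_T)}$ which follows from $f\in W^{1,1}([0,T];L^1(\Omega))$ and the averaging definition of $f^n$, I obtain
\begin{equation*}
\|U^{n+1}-U^n\|_{L^1(\Omega)}\le \dt\,C,\qquad C=C\bigl(\|f\|_{W^{1,1}([0,T];L^1(\Omega))},\|\pfrac u_0\|_{L^1(\Omega)}\bigr),
\end{equation*}
uniformly in $n$ and $\dt$. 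This exactly means that the piecewise affine interpolant $\widetilde{\beta(u)}_\dt$ satisfies $\|\partial_t \widetilde{\beta(u)}_\dt\|_{L^\infty(0,T;L^1(\Omega))}\le C$.

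The third step is the passage to the limit. From the uniform bound above, along a subsequence $\partial_t \widetilde{\beta(u)}_\dt$ converges weakly-$*$ in $L^\infty(0,T;L^1(\Omega))$. Combined with the strong convergence $\widetilde{\beta(u)}_\dt\to \beta(u)$ in $C([0,T];L^1(\Omega))$ established in \textbf{Step 4} of Theorem \ref{thexi1}, the weak-$*$ limit must be $\partial_t \beta(u)$, so $\beta(u)\in W^{1,\infty}(0,T;L^1(\Omega))$. In particular $\partial_t\beta(u)\in L^\infty(0,T;L^1(\Omega))\subset L^1(Q_T)$ and $\beta(u)\in C([0,T];L^1(\Omega))$.

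Finally, to recover Definition \ref{defsol1}, I take an admissible test function $\phi\in L^\infty(Q_T)\cap L^p(0,T;\Wspz)$ with $\partial_t\phi\in L^{(m+1)'}(Q_T)$ and apply the weak formulation \eqref{fv2}. Since $\partial_t\beta(u)\in L^1(Q_T)$ and $\beta(u)\in C([0,T];L^1(\Omega))$, I can integrate the term $\int_0^t\!\!\int_\Omega\beta(u)\partial_t\phi\,dxd\tau$ by parts in time (using Lemma \ref{ipp} after a standard density/truncation argument in $\phi$ since $\beta(u),\phi\in L^\infty$), which transforms the weak identity into the strong one. The main technical obstacle I anticipate is the justification of this final integration by parts under the test class of Definition \ref{defsol1}, together with the rigorous identification of the weak-$*$ limit of $\partial_t \widetilde{\beta(u)}_\dt$ with $\partial_t\beta(u)$; both should be handled by duality against smooth time-space test functions and a density argument relying on the $L^\infty$ bounds from Proposition \ref{estimations2}.
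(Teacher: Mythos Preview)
Your discrete approach is correct and is essentially the ``hands-on'' version of what the paper does. The paper, following \cite[Th.~4.4]{barbu2}, works directly at the continuous level: it applies the $L^1$-contraction \eqref{L1contr2} (already proved in Proposition \ref{estimations2}) to the solution and its time-shift to get
\[
\|\beta(u)(t_2)-\beta(u)(t_1)\|_{L^1}\le \|\beta(u)(t_2-t_1)-\beta(u_0)\|_{L^1}+\int_0^{t_1}\|f(\tau+t_2-t_1)-f(\tau)\|_{L^1}\,d\tau,
\]
and then bounds the first term by $\int_0^{t_2-t_1}\|f(\tau)-\pfrac u_0\|_{L^1}d\tau$ using $\beta(u_0)\in D(A)$ and again \eqref{L1contr2}. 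This yields Lipschitz continuity of $\beta(u)$ in $L^1$ without revisiting the scheme; the paper then cites \cite[Th.~1.17]{barbu2} to conclude $\beta(u)\in W^{1,\infty}(0,T;L^1(\Omega))$. Your argument reproduces exactly these two ingredients (accretivity and $\beta(u_0)\in D(A)$) at the level of $(U^n)$, which is equally valid and perhaps more transparent; the paper's route is shorter because it reuses Proposition \ref{estimations2}.

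One genuine technical issue in your write-up: the claim that $\partial_t\widetilde{\beta(u)}_\dt$ has a weak-$*$ convergent subsequence in $L^\infty(0,T;L^1(\Omega))$ is not justified, since $L^1(\Omega)$ is not a dual space and this space lacks the required compactness. The clean fix is the paper's: pass the Lipschitz bound itself to the limit using the strong convergence $\widetilde{\beta(u)}_\dt\to\beta(u)$ in $C([0,T];L^1(\Omega))$, and then invoke \cite[Th.~1.17]{barbu2} (or a direct argument exploiting $\beta(u)\in L^\infty(Q_T)$) to upgrade Lipschitz to $W^{1,\infty}$.

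For the final step, the paper avoids your density/truncation detour by a neat choice of the pivot space in Lemma \ref{ipp}: since $L^1(\Omega)\hookrightarrow (L^\infty(\Omega))^*$ and $\beta(u)\in L^\infty(Q_T)$, one has $\beta(u)\in W^{1,m+1,(m+1)'}(0,T;V,V^*)$ with $V=L^\infty(\Omega)$; the test functions in Definition \ref{defsol1} also sit in this space, so Lemma \ref{ipp} applies directly to convert \eqref{fv2} into the strong formulation.
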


\begin{proof} 
The proof follows the proof of Theorem 4.4 in \cite{barbu2}. For the convenience of the readers, we give it extensively below.\\
The estimate \eqref{L1contr2} and Remark 1.2 of  \cite{barbu2} imply that for any $t_1, t_2\in [0,T]$ such that $t_1<t_2$
\begin{equation*}
    \begin{split}
\|\beta(u)(t_2)&-\beta(u)(t_1)\|_{L^1(\Omega)}\\
&\leq \|\beta(u)(t_2-t_1)- \beta(u_0)\|_{L^1(\Omega)}+\int_0^{t_1}\|f(\tau+t_2-t_1)-f(\tau)\|_{L^1(\Omega)}d\tau,\\
&\leq \|\beta(u)(t_2-t_1)- \beta(u_0)\|_{L^1(\Omega)}+C|t_1-t_2|,
\end{split}
\end{equation*}
and
$$\|\beta(u)(t_2-t_1)-\beta(u_0)\|_{L^1(\Omega)}\leq \int_0^{t_2-t_1} \|f(\tau)-\pfrac u_0\|_{L^1(\Omega)}\,d\tau.$$

From above computations, we deduce that $\beta(v)$ is Lipschitz continuous from $[0,T]$ to $L^1(\Omega)$. Hence, \cite[Theorem 1.17]{barbu2} implies that $\beta (u) \in W^{1,\infty}(0,T,L^1(\Omega))$.\\
Since $L^1(\Omega) \hookrightarrow (L^\infty(\Omega))^*$, we have $L^1(0,T; L^1(\Omega))\hookrightarrow L^1(0,T;(L^\infty(\Omega))^*)$ and with $\beta(v) \in L^\infty(Q_T)$, we get $\beta(v)$ belongs to the Sobolev-Bochner space\break $W^{1,m+1,(m+1)'}(0,T;V,V^*)$ with $V=L^\infty(\Omega)$.\\
Finally  taking  $\phi \in L^{\infty}(Q_T)\cap L^p(0,T;\Wspz)$ with $\partial_t \phi \in L^{(m+1)'}(Q_T)$ in \eqref{fv2}  and  applying Lemma \ref{ipp}  we deduce that  $v$ is a strong solution (in sense of Definition \ref{defsol1}).
\end{proof}

As in \cite{barbu2}, we observe:
\begin{remark}
Let $v_0\in L^\infty(\Omega)\cap\Wspz$ and $v$ be a strong solution then $\beta(v)$ is a mild solution.
\end{remark}

Going back to a more general $h$, we state:
\begin{theorem}[Uniqueness]\label{unipb2}
Let $v_0\in L^\infty(\Omega)\cap \Wspz$ and $T>0$. Assume that \eqref{f1} and \eqref{f3} hold.  Then, \eqref{pbh} admits a unique weak-mild solution on $Q_T$.
\end{theorem}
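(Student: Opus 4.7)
The plan is to combine the $L^1$-contraction estimate from Proposition \ref{estimations2} with the local Lipschitz condition \eqref{f3} and close the argument by Gronwall's lemma. Existence is already in hand: Theorem \ref{thexi1} together with Corollary \ref{coro1} gives at least one weak-mild solution on $Q_T$, provided we first pick $R>0$ large enough so that $h_R(\cdot,\cdot,v)=h(\cdot,\cdot,v)$ a.e.\ on $Q_T$, which is possible thanks to the explicit $L^\infty$-bound \eqref{bd1}. So the core task is uniqueness.

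Let $u,v$ be two weak-mild solutions of \eqref{pbh} on $Q_T$ sharing the initial datum $v_0$. Both lie in $X_T \subset L^\infty(Q_T)$, hence by \eqref{f1} the frozen sources
\[
 f:=h(\cdot,\cdot,u),\qquad g:=h(\cdot,\cdot,v)
\]
belong to $L^\infty(Q_T)$. Then $u$ is a weak-mild solution of $\partial_t\beta(w)+\pfrac w=f$ with $f\in L^\infty(Q_T)$ and analogously for $v$ with source $g$. These are exactly the settings of Propositions \ref{mildcor} and \ref{estimations2}: in particular, by Proposition \ref{mildcor} each of $u,v$ coincides with the weak-mild solution produced by the discretization scheme of Theorem \ref{thexi1}, so the contraction estimate \eqref{L1contr2} legitimately applies to the pair $(u,v)$ and yields
\[
 \sup_{[0,t]}\|\beta(u)-\beta(v)\|_{L^1(\Omega)}\leq \int_0^t\|h(\tau,\cdot,u)-h(\tau,\cdot,v)\|_{L^1(\Omega)}\,d\tau,
\]
since the initial data coincide.

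To close, I would use the joint $L^\infty$-bound \eqref{bd1} for $u$ and $v$ to exhibit a compact set $K\subset\mathbb{R}$ containing the ranges of both, so that \eqref{f3} provides a constant $C=C(K,T)\geq 0$ with $|h(\tau,x,u)-h(\tau,x,v)|\leq C|\beta(u)-\beta(v)|$ a.e.\ on $Q_T$. Substituting into the previous inequality gives
\[
 \sup_{[0,t]}\|\beta(u)-\beta(v)\|_{L^1(\Omega)}\leq C\int_0^t \|\beta(u)-\beta(v)\|_{L^1(\Omega)}\,d\tau,
\]
and Gronwall's lemma forces $\|\beta(u)-\beta(v)\|_{L^1(\Omega)}(t)=0$ for every $t\in[0,T]$. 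Since $\beta$ is strictly increasing, $u=v$ a.e.\ in $Q_T$.

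The main subtlety is the hand-off between the ``linear-source'' contraction property of Proposition \ref{estimations2} (stated for fixed $L^\infty$ right-hand sides) and the ``nonlinear'' problem \eqref{pbh}. This is precisely what the uniqueness statement of Proposition \ref{mildcor} is for: freezing the sources at $f$ and $g$ forces $u,v$ to agree with the corresponding discretized limits, legitimizing the use of \eqref{L1contr2}. Apart from this, the argument is routine once one notices that the boundedness of $u$ and $v$ reduces the global hypothesis \eqref{f3} to a genuine Lipschitz estimate in the $\beta(v)$-variable on $K$.
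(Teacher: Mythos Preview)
Your proposal is correct and follows essentially the same approach as the paper: freeze the sources $f=h(\cdot,\cdot,u)$, $g=h(\cdot,\cdot,v)\in L^\infty(Q_T)$, invoke the $L^1$-contraction \eqref{L1contr2} (legitimized via Proposition \ref{mildcor}), then use \eqref{f3} on the compact range to obtain a Gronwall inequality in $\|\beta(u)-\beta(v)\|_{L^1}$. Your write-up is in fact more careful than the paper's in two respects: you apply \eqref{L1contr2} on $[0,t]$ for each $t$ (rather than only on $[0,T]$), which is what Gronwall actually needs, and you make explicit the hand-off through Proposition \ref{mildcor} that justifies applying the ``linear-source'' contraction to the nonlinear problem.
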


\begin{proof}
Let $u,v$ be two weak-mild solutions, then $h(\cdot,\cdot,u),h(\cdot,\cdot,v)\in L^\infty(Q_T)$. Then from the uniqueness of the mild solution for a fixed $f\in L^\infty(Q_T)$ and from \eqref{L1contr2} we obtain:
$$\sup_{[0,T]}\|\beta(u)-\beta(v)\|_{L^1(\Omega))}\leq \int_0^T\|h(t,\cdot,u)-h(t,\cdot,v)\|_{L^1(\Omega)}\,dt.$$
Since $u,v\in L^\infty(Q_T)$ and from \eqref{f3}, we infer:
$$\|h(t,\cdot,u)-h(t,\cdot,v)\|_{L^1(\Omega)}\leq C\|\beta(u)-\beta(v)\|_{L^1(\Omega)}.$$
Then, Theorem \ref{unipb2} follows from Lemma \ref{gronw}.
\end{proof}
\subsection{Proofs of Theorems \ref{thgen} and \ref{GB2}}
Taking into account the results in subsections \ref{subsection 3.1} and \ref{subsection 3.2}, we show Theorems \ref{thgen} and \ref{GB2}.

\begin{proof}[Proof of Theorem \ref{thgen}]
First, Corollary \ref{coro1} gives the existence of a weak solution $v$ of \eqref{pbh} on $Q_T$ for some suitable $T>0$. Using Proposition \ref{mildcor} and considering $f=h(\cdot,\cdot,v)\in L^\infty(Q_T)$, we deduce that $\beta(v)$ is a mild solution of \eqref{pborig} and hence $v$ is a $T$-weak-mild solution of \eqref{pbh}.\\
\ \\
Proof of {\it (i)}. Let $q\leq 1$ and $v_{glob}\in L_{loc}^\infty(0,\infty;L^\infty(\Omega))$ be the solution given in Corollary \ref{coro1}.\\ 
As above, Proposition \ref{mildcor} implies that $v_{glob}$ is a mild solution on each interval $[nT,(n+1)T]$ for $n\in \mathbb N$ where the parameter $T$ is given as in the proof of Corollary \ref{coro1}. \\
Hence, consider for instance the intervals $[0,T]$ and $[T,2T]$. From the definition of an $\epsilon$-approximation and since $\tilde v _\dt$ tends to $v_{glob} $ in $C([0,T];L^r(\Omega))$, we have that $v_{glob}$ is a mild solution over $[0,2T]$. By induction argument, we obtain that, for any $\hat T>0$, $v_{glob}$ is a weak-mild solution of \eqref{pbh} on $Q_{\hat T}$.
Finally, we deduce the existence of a $\infty$-weak-mild solution.\\
\ \\
Proof of {\it (ii)}. Let $T'\in (0,T)$, {\it(ii)} follows from the inequality \eqref{L1contr2} with $f=h(\cdot,\cdot,v)$ and $g=h(\cdot,\cdot,u)$ belonging to $L^\infty(Q_{T'})$. \\
\ \\
Proof of {\it (iii)}.  Under the condition \eqref{f3}, Theorem \ref{unipb2} infers the uniqueness of the weak-mild solution that entails the uniqueness of the $T$-weak-mild solution.\\
\ \\
Proof of {\it (iv)}. Assume that $\beta(v_0)\in D(A)$ and $\tilde T\in (0,T)$ providing  $h(\cdot,\cdot,v)\in W^{1,\infty}(0,\tilde T;L^1(\Omega))$. Then, Proposition \ref{mildth} with $f=h(\cdot,\cdot,v)$  implies that $v$ is a strong solution of \eqref{pbh}.\
\end{proof}

\begin{proof}[Proof of Theorem \ref{GB2}] Let $T\in (0,+\infty)$. Without loss of generality, we can assume $T\geq T^\star$ where $T^\star$ is given in \eqref{tstar}.\\
Let $v$ be the unique $T$-weak-mild solution of \eqref{pbh}. We assume $||v(t)||_{L^\infty(\Omega)} \nrightarrow \infty$ as $t$ goes to $T$ {\it i.e.} there exits a constant $\tilde C>0$ such that 
\begin{equation}\label{linf}
\liminf_{t\to T}||v(t)||_{L^\infty(\Omega)}\leq \tilde C.
\end{equation}
%
%
Define  $C_0:=||v_0||_{L^\infty(\Omega)}^\frac{1}{m}+\tilde{C}^\frac{1}{m}<\infty$ and $T_0:= \sup_{\sigma>0} \frac{\sigma^\frac{1}{m}-C_0}{C_h(1+\sigma^\frac{q}{m})}<T^\star.$\\
Since \eqref{linf} holds, there exists $t_0\in (T-\frac12 T_0,T)$ such that $||v(t_0)||_{L^\infty(\Omega)}\leq \tilde C$. Hence, defining  
$$\tilde T=\sup_{\sigma>0} \frac{\sigma^\frac{1}{m}-||v(t_0)||^\frac{1}{m}_{L^\infty(\Omega)}}{C_h(1+\sigma^\frac{q}{m})}> T_0,$$
Corollary \ref{coro1} gives the existence of a unique weak-mild-solution $\tilde{v}$ of \eqref{pbh} for $\tilde{v}(0)=v(t_0)$ on $[0,\tilde T-\epsilon]\times \Omega$  for any $\epsilon>0$ small enough.\\
Finally we define  $V=v$ on $[0,t_0]$ and $V=\tilde{v}$ on $[t_0, t_0+\tilde T-\epsilon]$. Hence choosing $\epsilon$ small enough such that $T'=t_0+\tilde T-\epsilon>T$ and thanks to Theorem \ref{unipb2}, we deduce that  $V$ extends the solution $v$ of \eqref{pbh} on $[0,T']\times \Omega$ with $T'>T$.
\end{proof}

\subsection{Sub-homogeneous case}\label{subh}

In this subsection,  we always assume $p>\frac{q}{m}+1$ in the condition \eqref{f1}.
\begin{proof}[Proof of Theorem \ref{subhomo2}] By Theorem \ref{thexi1}, for $R>0$ and $T>0$, there exists a {nonnegative} weak solution $v$ of \eqref{pbpsi}.
Next, we show that the hypothesis $0\leq v_0\leq C d(\cdot,\partial \Omega)^s$ implies that the solution $v$ is uniformly bounded in $Q_T$ independently of $T$.\\
Indeed, let $K>0$ and consider $w_K\in C^s(\Rn)\cap \Wspz$ (see \cite[Th 2.7]{reghold} for H\"older-regularity of $w_K$) the unique solution to:
\begin{equation*} 
    \begin{cases}
 \pfrac w_K=K & \text{in} \;\Omega,\\
      w_K=0 & \text{in} \; \Rn\backslash \Omega.
    \end{cases}\
\end{equation*}
From the homogeneity of  $(-\Delta)^s_p$ and from \cite[Theorem 1.5]{delpezzo-quaas}, we have  that, for $K$ large enough, $w_K \geq c_Kd(\cdot,\partial \Omega)^s\geq v_0$ (see the proof of \cite[Theorem 3.1]{exi} for further details).\\
Fixing such $K$, we now consider  $w\in L^\infty(\Omega)\cap \Wspz$  {the positive} solution of:
\begin{equation*} 
    \begin{cases}
 \pfrac w=K +C_h(1+|w|^\frac{q}{m}) & \text{in} \;\Omega,\\
      w=0 & \text{in} \; \Rn\backslash \Omega.
    \end{cases}\
\end{equation*}
Since $p>\frac{q}{m}+1$, the solution $w$ {is unique (from similar arguments as in the proof of Theorem \ref{existab})} and can be obtained as the global minimizer of the coercive energy functional defined on $W^{s,p}_0(\Omega)$ by:
$$J(v)=\frac{1}{p}\|v\|^p_\Wspz-\int_\Omega Kv+C_h(v+\frac{m}{q+m}\pow{v}{\frac{q}{m}+1})\,dx.$$
By Proposition \ref{princcomp} (with $g\equiv 0$),  we obtain that $w\geq w_K \geq v_0$. Moreover using $p>\frac{q}{m}+1$ and the same arguments as \cite[Th.2.2]{exi},  we have $w\in L^\infty(\Omega)$.\\
From $w\geq v_0$, we claim that  $w\geq v^1$. Indeed, we have with \eqref{f1}:
\begin{equation*}
\begin{split}
\frac{\beta(v^1)-\beta(w)}{\dt}&+\pfrac v^1-\pfrac w  \\
&=\frac{\beta(v_0)-\beta(w)}{\dt}+h^0-(K +C_h(1+|w|^\frac{q}{m})) \leq 0
\end{split}
\end{equation*}
and by Proposition \ref{princcomp} with $g=\beta$, we deduce $v^1\leq w$ and by induction $v^n\leq w$ for any $n$.\\
So $(v_\dt)_\dt$ is uniformly bounded in $L^\infty(Q_T)$ by $\|w\|_{L^\infty(\Omega)}$.\\
Finally we choose $R\geq\|w\|_{L^\infty(\Omega)}$ to obtain a solution of \eqref{pbh} on $Q_T$.
\ \\
We get a weak-mild solution for any $T>0$ and as in Corollary \ref{coro1} we obtain a $\infty$-weak-mild solution $v\in L^\infty(Q_\infty)$. In particular we fix $T=1$ and we define $v_{glob}$ on $\Omega\times [0,\infty)$ by $v_{glob}=v_{n+1}(\cdot-n)$ over $[n,n+1]$.
\end{proof}

We have the following comparison principle for the solutions built in the proof of Theorem \ref{subhomo2}:

\begin{theorem}\label{comp}[Comparison Principle]
Let $h$ be nondecreasing and $u,v$ be two solutions of Theorem \ref{subhomo2} associated to the initial data: $u_0,v_0\in L^\infty(\Omega)\cap \Wspz$ nonnegative, respectively. Then if $v_0\geq u_0$, $v\geq u$ on $Q_T$ for any $T>0$.

\end{theorem}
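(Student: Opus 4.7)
The plan is to compare the two solutions at the discrete level of the time-scheme used in the proof of Theorem \ref{thexi1} (and invoked again in the proof of Theorem \ref{subhomo2}), propagate the inequality by induction via the elliptic comparison principle, and then pass to the limit as $\dt \to 0$. First, I would fix $T>0$ and choose a common truncation level $R>0$ large enough that both sequences $(u^n)_n$ and $(v^n)_n$ associated with $(u_0, h_R)$ and $(v_0, h_R)$ stay bounded by $R$ in $L^\infty(\Omega)$; this is possible since the barrier $w$ built in the proof of Theorem \ref{subhomo2} is determined by $h$ alone and can be taken to dominate both $u_0$ and $v_0$. Since $h$ is nondecreasing in its last variable and the truncation $\theta \mapsto \mathrm{sgn}(\theta)\min(|\theta|,R)$ is also nondecreasing, $h_R$ is monotone in its last argument as well.

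Second, I would prove by induction that $v^n \geq u^n$ \emph{a.e.}\ in $\Omega$ for every $n$. The base case $n=0$ is the hypothesis $v_0 \geq u_0$. For the induction step, subtract the elliptic equations \eqref{pbn} defining $v^n$ and $u^n$ to obtain
\begin{equation*}
\tfrac{1}{\dt}\bigl(\beta(v^n)-\beta(u^n)\bigr) + \pfrac v^n - \pfrac u^n
= \bigl(h^n_v - h^n_u\bigr) + \tfrac{1}{\dt}\bigl(\beta(v^{n-1}) - \beta(u^{n-1})\bigr),
\end{equation*}
where $h^n_w = \fint_{t_{n-1}}^{t_n} h_R(\tau,x,w^{n-1})\,d\tau$. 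By the induction hypothesis $v^{n-1} \geq u^{n-1}$, monotonicity of $\beta$ gives that the last term on the right is $\geq 0$, and monotonicity of $h_R$ in its last argument gives $h^n_v \geq h^n_u$. Hence the right-hand side is nonnegative, and Proposition \ref{princcomp} applied with $g = \beta$ and $\lambda = \dt$ yields $v^n \geq u^n$.

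Third, I would pass to the limit. With $R$ fixed as above, \textbf{Step 4} of the proof of Theorem \ref{thexi1} gives $u_\dt \to u$ and $v_\dt \to v$ in $L^\infty(0,T;L^r(\Omega))$ for any $r\in[1,\infty)$ (up to subsequences, with the limits being the unique weak-mild solutions from Proposition \ref{mildcor}, applied with $f=h_R(\cdot,\cdot,u)$ and $f=h_R(\cdot,\cdot,v)$ respectively, which lie in $L^\infty(Q_T)$). The piecewise-constant interpolants $v_\dt - u_\dt$ are nonnegative by the induction above, and this property is preserved in the limit, so $v \geq u$ a.e.\ in $Q_T$. Since $T$ is arbitrary and the solutions in Theorem \ref{subhomo2} are built by concatenating such pieces on consecutive intervals $[nT,(n+1)T]$, an induction on $n$ (using the comparison just established on each piece with initial data $v(nT) \geq u(nT)$) extends the inequality to $Q_\infty$.

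The only slightly delicate point is making sure the same cutoff $R$ can be used for both solutions so that the comparison argument takes place for the same operator; this is ensured by the a priori $L^\infty$ barrier $w$ from the proof of Theorem \ref{subhomo2}, which depends only on $h$, $\Omega$ and the boundary-behaviour bound on the initial data, not on the specific initial datum. Beyond that, the argument is a routine induction plus stability of non-negativity under the established modes of convergence.
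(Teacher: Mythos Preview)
Your argument is correct and follows essentially the same route as the paper: compare the discrete iterates $(u^n)$ and $(v^n)$ from the scheme \eqref{pbn} using Proposition \ref{princcomp} with $g=\beta$, propagate by induction exploiting the monotonicity of $h$ (hence of $h_R$) and of $\beta$, and then pass to the limit as $\dt\to 0$. Your treatment is in fact more explicit than the paper's on the point of fixing a common truncation level $R$ for both sequences via the barrier $w$ from Theorem \ref{subhomo2}, which the paper leaves implicit.
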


\begin{proof}
We denote $v_m, u_m$ the sequences build at the end of the proof of Theorem \ref{subhomo2} such that $v,u=v_{m+1}(\cdot-m),u_{m+1}(\cdot-m)$ over $[m,m+1]$ .

Using the time discretization scheme \eqref{pbn} and $h$ nondecreasing we have by Proposition \ref{princcomp} that if $v_m(0)\geq u_m(0)$ then $v_m^1 \geq u_m^1$. And by iteration we have $v_m^n\geq u_m^n$ for any $n$, passing to the limit as $\dt$ goes to $0^+$ in $C([0,1],L^r(\Omega))$, we have $v_m\geq u_m$ which gives $v\geq u$.
\end{proof}

\section{The problem \texorpdfstring{\eqref{pbq}}{Pq}}\label{mainpart}

In this section, we study a particular case of the equation \eqref{pbh} considering $h(\cdot,\cdot,\theta)=\pow{\theta}{\frac{q}{m}}$. More precisely, we investigate the following problem:
\begin{equation}\tag{$P_q$}
    \begin{cases}
     \partial_t \beta(v)+\pfrac v=\pow{v}{\frac{q}{m}} & \text{in} \;Q_T,\\
      v=0 & \text{in} \; (0,T) \times \Rn\backslash \Omega, \\
      v(0,\cdot)=v_0& \text{in} \; \Omega ,
    \end{cases}\
\end{equation}

for $q>0$, $m>1$ and $v_0\in \Wspz \cap L^\infty(\Omega)$.\\
From Theorem \ref{thgen}, there exists $T\in (0,\infty]$ such that \eqref{pbq} admits a $T$-weak-mild solution. If $q\in (0,1]$, then $T=\infty$ and if $q\geq 1$ the solution is unique. \\ 
We first establish the following {pointwise type energy estimate}:

\begin{proposition}\label{estim}
Let $T>0$  and $v$ be a weak-mild solution of \eqref{pbq} on $Q_T$. Then, for any $r\geq 1$, the following {energy} estimate holds:
\begin{equation}\label{condex}
\frac{1}{1+mr}\int_\Omega \partial_t (|v|^{r+\frac{1}{m}})\,dx +\langle\pfrac v,\pow{v}{r}\rangle=\int_\Omega |v|^{\frac{q}{m}+r}\,dx, 
\end{equation}
 for {\it a.e.} $t< T$.
\end{proposition}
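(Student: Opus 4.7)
The strategy is to test the weak formulation \eqref{fv2} (with $h(\cdot,\cdot,v)=\pow{v}{q/m}$) against the choice $\phi=\pow{v}{r}$, then convert the time-integral terms into the total derivative of $t\mapsto \int_\Omega |v|^{r+1/m}dx$, and finally differentiate in $t$. Formally, one has the pointwise algebraic relations $\beta(v)\pow{v}{r}=|v|^{r+1/m}$ and $\pow{v}{q/m}\pow{v}{r}=|v|^{q/m+r}$, and the chain-rule identification
\begin{equation*}
\beta(v)\,\partial_t\pow{v}{r} \;=\; \frac{rm}{rm+1}\,\partial_t|v|^{r+1/m}.
\end{equation*}
Substituting into \eqref{fv2} and using $1-\tfrac{rm}{rm+1}=\tfrac{1}{rm+1}$ produces the integrated identity
\begin{equation*}
\frac{1}{rm+1}\Bigl[\int_\Omega |v|^{r+1/m}\,dx\Bigr]_0^t + \int_0^t \langle\pfrac v,\pow{v}{r}\rangle\,d\tau = \int_0^t\int_\Omega |v|^{q/m+r}\,dxd\tau,
\end{equation*}
and differentiation in $t$ yields \eqref{condex}.

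The first task is to certify that $\phi=\pow{v}{r}$ is admissible, i.e.\ lies in $\mathcal{W}_T\cap L^p(0,T;\Wspz)$. Since $v\in X_T$ is bounded and $r\geq 1$, the map $\xi\mapsto\pow{\xi}{r}$ is Lipschitz on the range of $v$, so the pointwise bound $|\pow{v(x,t)}{r}-\pow{v(y,t)}{r}|\leq C\|v\|_{L^\infty(Q_T)}^{r-1}|v(x,t)-v(y,t)|$ gives $\pow{v}{r}\in L^\infty(0,T;\Wspz)\subset L^p(0,T;\Wspz)$. For the time regularity, $v\in L^\infty(Q_T)$ together with $\partial_tv\in L^2(Q_T)$ puts $v$ in $H^1(0,T;L^2(\Omega))$; the Bochner chain rule then yields $\partial_t\pow{v}{r}=r|v|^{r-1}\partial_tv\in L^2(Q_T)\subset L^{(m+1)'}(Q_T)$, hence $\pow{v}{r}\in\mathcal{W}_T$.

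The main technical point is the chain-rule identification of $\beta(v)\partial_t\pow{v}{r}$. Writing $\beta(v)=\mathrm{sgn}(v)|v|^{1/m}$ and using the expression for $\partial_t\pow{v}{r}$ just obtained, one computes pointwise a.e.\ that $\beta(v)\partial_t\pow{v}{r}=r\,\mathrm{sgn}(v)|v|^{r+1/m-1}\partial_tv$. Since $r+1/m\geq 1$ and $v\in L^\infty\cap H^1(0,T;L^2(\Omega))$, a second application of the Bochner chain rule, applied to the locally Lipschitz map $\xi\mapsto|\xi|^{r+1/m}$ (which is $C^1$ for $r>1$ and Lipschitz for $r=1$), gives $\partial_t|v|^{r+1/m}=(r+1/m)\,\mathrm{sgn}(v)|v|^{r+1/m-1}\partial_tv$ in $L^1(Q_T)$, from which the factor $\frac{rm}{rm+1}$ follows. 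Because both sides of the resulting integrated identity above are absolutely continuous in $t$ (their $\tau$-integrands lie in $L^1_\tau$), Lebesgue differentiation produces \eqref{condex} at almost every $t<T$. The delicate step is precisely the rigorous justification of the two chain-rule formulas under the limited regularity $v\in X_T$; this is handled by the standard composition theorem in Bochner spaces recalled above, and nothing else in the argument is more than bookkeeping.
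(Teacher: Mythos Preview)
Your proof is correct and follows essentially the same approach as the paper: test \eqref{fv2} with $\phi=\pow{v}{r}$, use the chain-rule identity $\beta(v)\partial_t\pow{v}{r}=\frac{mr}{mr+1}\partial_t|v|^{r+1/m}$, and differentiate the resulting integrated identity in $t$. The only cosmetic difference is that the paper obtains the pointwise-in-$t$ statement by a difference quotient argument (replace $t$ by $t+h$, subtract, and pass to the limit via \cite[Proposition 1.4.29]{c&h}), whereas you invoke absolute continuity and Lebesgue differentiation directly; these are equivalent justifications.
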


\begin{proof}
Let $r\geq1$. By Definition \ref{defsol2}, $v\in X_T$ and so $\pow{v}{r}$ is a suitable test function in \eqref{fv2}, then:

\begin{equation*}
\begin{split}
\bigg[\int_\Omega |v|^{r+\frac{1}{m}}\,dx \bigg]^t_0-\int_0^t\int_\Omega \beta(v)\partial_t \pow{v}{r}\,dxd\tau +\int_0^t&\langle\pfrac v,\pow{v}{r}\rangle\,d\tau\\
&=\int_0^t\int_{\Omega} |v|^{\frac{q}{m}+r}\,dxd\tau.
\end{split}
\end{equation*}

Since $\partial_t v \in L^2(Q_T)$ and $v\in L^\infty(Q_T)$, we have $\beta(v) \partial_t \pow{v}{r}=\frac{mr}{1+mr}\partial_t |v|^{r+\frac{1}{m}}$ and applying Lemma \ref{ipp}, this yields 
\begin{equation*}
\begin{split}
\bigg[\int_\Omega |v|^{r+\frac{1}{m}}\,dx \bigg]^t_0-\int_0^t\int_\Omega \beta(v) \partial_t \pow{v}{r}\,dxd\tau &=\frac{1}{1+mr} \bigg[\int_\Omega |v|^{r+\frac{1}{m}}\,dx \bigg]^t_0,\\
&=\frac{1}{1+mr}\int_0^t\int_\Omega \partial_t |v|^{r+\frac{1}{m}}\,dxd\tau.
\end{split}
\end{equation*}
Replacing $t$ by $t+h$, subtracting both expressions and using \cite[Proposition 1.4.29]{c&h}, we derive \eqref{condex} by passing to the limit as $h$ goes to $0$.
\end{proof}

\subsection{Sub-homogeneous case}\label{partsubho}

In this subsection,  we always assume $p>\frac{q}{m}+1$ and we prove Theorem \ref{subhomo1}. Using results in Section \ref{subh} we directly have:

\begin{corollary}\label{subhomo}
Let $v_0 \in L^{\infty}(\Omega)\cap \Wspz$ satisfying $ |v_0|\leq C d(\cdot,\partial \Omega)^s$ for some constant $C>0$. Then, there exists  a $\infty$-weak-mild solution  $v_{glob}\in L^\infty(Q_\infty)$ to \eqref{pbq}.\\
Furthermore, if $u,v$ are solutions respectively associated to initial data: $u_0,v_0\in L^\infty(\Omega)\cap \Wspz$, then $v_0\geq u_0$ implies $v\geq u$ on $Q_T$ for any $T>0$.
\end{corollary}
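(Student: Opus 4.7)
The plan is to deduce this corollary directly from the machinery assembled in Section~\ref{subh}. The first step is to recognise that \eqref{pbq} is exactly the instance of \eqref{pbh} with $h(t,x,\theta)=\pow{\theta}{q/m}$, i.e.\ the prototype source noted just after \eqref{f3} to satisfy \eqref{f1} with $C_h=1$ and exponent $q$. Since the hypothesis $p>\frac{q}{m}+1$ is assumed, we are placed exactly in the regime of Theorem~\ref{subhomo2}. For nonnegative initial data $0\leq v_0\leq C\,d(\cdot,\partial\Omega)^s$ the existence of a nonnegative $\infty$-weak-mild solution $v_{glob}\in L^\infty(Q_\infty)$ is then an immediate application of Theorem~\ref{subhomo2}, with no further work required.

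To cover the signed case $|v_0|\leq C\,d(\cdot,\partial\Omega)^s$, I would exploit the odd symmetry of both $\pfrac$ and the nonlinearity $\theta\mapsto \pow{\theta}{q/m}=|\theta|^{q/m-1}\theta$. Build the same $L^\infty(\Omega)\cap\Wspz$ super-solution $w\geq 0$ as in the proof of Theorem~\ref{subhomo2} so that $w\geq C\,d(\cdot,\partial\Omega)^s$. Then by oddness $-w$ is a sub-solution of the stationary comparison problem, and $-w\leq v_0\leq w$. Applying Proposition~\ref{princcomp} iteratively to the time-discrete scheme \eqref{pbn} with $g=\beta$ (as in the end of the proof of Theorem~\ref{subhomo2}) traps the iterates in $[-w,w]$ and yields a uniform bound $\|v_\dt\|_{L^\infty(Q_T)}\leq \|w\|_{L^\infty(\Omega)}$. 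Choosing $R\geq\|w\|_{L^\infty(\Omega)}$ in \eqref{pbpsi} and passing to the limit $\dt\to 0$ as in Theorem~\ref{thexi1} produces a weak-mild solution on $Q_T$ for every $T>0$; patching them together over $[n,n+1]\times\Omega$ exactly as at the end of the proof of Theorem~\ref{subhomo2} gives the sought $v_{glob}\in L^\infty(Q_\infty)$.

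For the comparison assertion, I would invoke Theorem~\ref{comp}: the map $\theta\mapsto \pow{\theta}{q/m}=|\theta|^{q/m-1}\theta$ is continuous on $\mathbb{R}$ and monotone nondecreasing (strictly increasing on each half-line and continuous through $0$), so the hypothesis of Theorem~\ref{comp} is satisfied and one obtains $v\geq u$ on $Q_T$ for every $T>0$ (extending the argument to signed data, if needed, via the very same iteration on \eqref{pbn} combined with Proposition~\ref{princcomp}, which only uses ordering of the iterates and not their sign). The only genuinely new point beyond a citation of Section~\ref{subh} is the symmetrisation used to handle signed initial data in the existence proof; this is the main place to be careful, but it is essentially routine given the oddness of both the diffusion operator and the source term.
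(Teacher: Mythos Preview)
Your proposal is correct and follows essentially the same approach as the paper, which simply states that the corollary follows directly from the results in Section~\ref{subh} (namely Theorem~\ref{subhomo2} for existence and Theorem~\ref{comp} for the comparison). In fact you go a bit further than the paper by spelling out how the odd symmetry of $\pfrac$ and of $\theta\mapsto\pow{\theta}{q/m}$ lets one trap the discrete iterates in $[-w,w]$ for signed initial data, and by noting that the comparison argument via Proposition~\ref{princcomp} on the scheme \eqref{pbn} does not rely on the sign of the iterates; the paper does not make these points explicit and immediately restricts to nonnegative $v_0$ right after the corollary.
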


In the following parts, we take the initial data $v_0$ nonnegative hence the considered solution of \eqref{pbq} are nonnegative. So we show the convergence of the solution of Corollary \ref{subhomo} to a positive steady state solution.

\subsubsection{Existence of a nontrivial stationary solution}

In this paragraph, we show the existence of a nontrivial stationary solution of \eqref{pbq}. Namely, we investigate the positive solution of the following problem: 
\begin{equation}\tag{$Q_{stat}$}\label{15}
    \begin{cases}
 \pfrac v_{\infty}=|v_{\infty}|^\frac{q}{m} & \text{in} \;\Omega,\\
      v_{\infty}=0 & \text{in} \; \Rn\backslash \Omega.
    \end{cases}\
\end{equation}
First, we recall the following notions:
\begin{definition}
We call a supersolution of \eqref{15} (respectively subsolution) any function  $v\in \Wspz$ verifying for any $ \phi \in \Wspz$ and nonnegative:

$$\langle\pfrac v,\phi  \rangle \geq \int_\Omega|v|^\frac{q}{m} \phi\, dx,$$

(respectively $\langle\pfrac v,\phi  \rangle \leq \int_\Omega|v|^\frac{q}{m} \phi \,dx$).
We call (weak) solution to \eqref{15} a function which is both a sub- and supersolution.
\end{definition}

We now prove the following existence result:

\begin{theorem}\label{existab}There exists a unique nontrivial solution  $v_{\infty}\in L^\infty(\Omega)\cap \Wspz$  of \eqref{15}. 
Furthermore, $v_{\infty}$ belongs to $C^s(\mathbb{R}^d)$ and there exists $c>0$ such that $\frac1cd(x, \partial\Omega)^s\leq v_{\infty}(x)\leq cd(x,\partial \Omega)^s$.
\end{theorem}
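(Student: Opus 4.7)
The plan is to obtain $v_\infty$ as the unique positive critical point of a coercive, weakly lower semicontinuous functional, then to upgrade regularity and boundary behaviour using the tools cited earlier in the paper.

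\textbf{Existence.} I consider the energy functional
\begin{equation*}
J(v) = \frac{1}{p}\|v\|_{W^{s,p}_0(\Omega)}^p - \frac{m}{m+q}\int_\Omega |v|^{\frac{q}{m}+1}\,dx
\end{equation*}
on $W^{s,p}_0(\Omega)$. Since $p>\frac{q}{m}+1$ and $\tfrac{q}{m}+1$ lies strictly below the critical Sobolev exponent associated with $W^{s,p}_0(\Omega)$, the embedding $W^{s,p}_0(\Omega)\hookrightarrow L^{\frac{q}{m}+1}(\Omega)$ is continuous (and compact). Combined with Young's inequality, this makes $J$ coercive, and the norm term together with Fatou for the lower-order term gives weak lower semicontinuity. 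A standard direct method then produces a global minimizer $v_\infty$. Because the norm on $W^{s,p}_0(\Omega)$ is non-increasing under $v\mapsto |v|$ (from $\bigl||v(x)|-|v(y)|\bigr|\le |v(x)-v(y)|$) and the lower-order term is invariant, I may assume $v_\infty\ge 0$. Testing $J(t\varphi)$ for a fixed $\varphi\in C_c^\infty(\Omega)$, $\varphi\ge 0$, $\varphi\not\equiv 0$, and small $t>0$, the exponent $\tfrac{q}{m}+1<p$ forces $J(t\varphi)<0$, so $v_\infty\not\equiv 0$. Since $J$ is Gâteaux differentiable, $v_\infty$ satisfies \eqref{15} in the weak sense.

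\textbf{$L^\infty$-bound and regularity.} An $L^\infty$-bound on $v_\infty$ can be obtained exactly as in the proof of Theorem \ref{subhomo2}: let $w$ solve $(-\Delta)^s_p w = C_0(1+|w|^{q/m})$ in $\Omega$ with zero exterior data (a global minimizer of the corresponding coercive functional), which belongs to $L^\infty(\Omega)\cap W^{s,p}_0(\Omega)$ thanks to $p>\frac{q}{m}+1$. Proposition \ref{princcomp} with $g\equiv 0$ applied to $v_\infty$ and $w$ yields $0\le v_\infty\le w$ in $\Omega$, in particular $v_\infty\in L^\infty(\Omega)$. The Hölder regularity $v_\infty\in C^s(\R^d)$ then follows from \cite[Th.~2.7]{reghold}, which provides in turn the upper estimate $v_\infty(x)\le c\, d(x,\partial\Omega)^s$.

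\textbf{Boundary behaviour from below.} For the lower bound I compare with a torsion-type subsolution. Let $w_K\in C^s(\R^d)\cap W^{s,p}_0(\Omega)$ solve $(-\Delta)^s_p w_K=K$ with zero exterior data. By \cite[Th.~1.5]{delpezzo-quaas} one has $w_K\ge c_K d(\cdot,\partial\Omega)^s$ in $\Omega$. Choosing $K$ so small that $K\le \bigl(c_K d(\cdot,\partial\Omega)^s\bigr)^{q/m}\le w_K^{q/m}$ on the subset where this is possible (equivalently, scaling $K$ using the homogeneity of $(-\Delta)^s_p$) makes $w_K$ a subsolution of \eqref{15}. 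The positivity $v_\infty>0$ is obtained by testing the equation for $v_\infty$ against $(w_K-v_\infty)^+$, exactly as in Proposition \ref{princcomp}, to get $v_\infty\ge w_K$ in $\Omega$. Combined with the upper bound above, this gives $\frac{1}{c}d(\cdot,\partial\Omega)^s\le v_\infty \le c\, d(\cdot,\partial\Omega)^s$.

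\textbf{Uniqueness.} This is the delicate step. The argument relies on strict sub-homogeneity $\tfrac{q}{m}<p-1$. Let $u_1,u_2$ be two positive solutions, both satisfying the two-sided estimate $\frac1c d(\cdot,\partial\Omega)^s\le u_i\le c\,d(\cdot,\partial\Omega)^s$. Define
\begin{equation*}
\lambda^\star=\inf\{\lambda>0 \,:\, \lambda u_2\ge u_1 \text{ a.e.\ in }\Omega\},
\end{equation*}
which is finite by the two-sided bounds. If $\lambda^\star\le 1$ then by symmetry $u_1=u_2$. Otherwise $\lambda^\star>1$ and $\lambda^\star u_2\ge u_1$ a.e., while $(-\Delta)^s_p(\lambda^\star u_2)=(\lambda^\star)^{p-1}u_2^{q/m}$ and $(\lambda^\star u_2)^{q/m}=(\lambda^\star)^{q/m}u_2^{q/m}$; since $(\lambda^\star)^{p-1}>(\lambda^\star)^{q/m}$, the function $\lambda^\star u_2$ is a strict supersolution, so by a strong comparison argument (testing the resulting strict inequality against $(u_1-\lambda u_2)^+$ for $\lambda$ slightly smaller than $\lambda^\star$, as in the proof of Proposition \ref{princcomp}) one finds some $\lambda<\lambda^\star$ with $\lambda u_2\ge u_1$, contradicting the definition of $\lambda^\star$. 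Hence $u_1=u_2$.

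\textbf{Main obstacle.} The uniqueness step is by far the most subtle, because in the nonlocal setting the Díaz–Saá/Picone tools used for the local $p$-Laplacian do not directly apply; the scaling/sliding argument above works because the two-sided Hopf-type estimate $\tfrac1c d(\cdot,\partial\Omega)^s\le u_i\le c\,d(\cdot,\partial\Omega)^s$ supplies the required comparability of positive solutions near the boundary and allows $\lambda^\star$ to be finite and strictly positive.
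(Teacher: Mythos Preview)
Your existence argument is essentially the same as the paper's, but the remaining three steps all have genuine gaps.

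\textbf{Lower bound.} The torsion function $w_K$ is \emph{never} a subsolution of \eqref{15}: since $w_K\in C^s(\R^d)$ vanishes on $\partial\Omega$, one has $w_K^{q/m}\to 0$ at the boundary, so the inequality $K\le w_K^{q/m}$ fails on a neighbourhood of $\partial\Omega$ for every $K>0$; no scaling fixes this because $\varepsilon^{p-1}\le (\varepsilon w_1)^{q/m}$ still breaks down near $\partial\Omega$. The subsequent comparison with $v_\infty$ is then also circular (it would require $K\le v_\infty^{q/m}$, i.e.\ positivity, which is what you are trying to prove). The paper proceeds differently and more directly: once $v_\infty\in L^\infty$, the strong maximum principle of \cite{delpezzo-quaas} gives $v_\infty>0$ in $\Omega$, and then the Hopf-type result \cite[Theorem 1.5]{delpezzo-quaas} yields $v_\infty\ge c\,d(\cdot,\partial\Omega)^s$.

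\textbf{Uniqueness.} Your assertion that ``the D\'iaz--Sa\'a/Picone tools do not directly apply'' in the nonlocal setting is precisely what the paper refutes: its proof tests the two equations with $u-\dfrac{v^p}{u^{p-1}}$ and $v-\dfrac{u^p}{v^{p-1}}$ (the two-sided bound $\frac1c d^s\le u,v\le c\,d^s$ is what makes these admissible in $W^{s,p}_0(\Omega)$), sums, and uses a nonlocal Picone-type inequality (\cite[Lemma 1.8]{exi}) together with $p>\frac{q}{m}+1$ to force $u=kv$, hence $u=v$. Your sliding argument, by contrast, is incomplete: from ``$\lambda^\star u_2$ is a strict supersolution'' to ``some $\lambda<\lambda^\star$ still dominates $u_1$'' you need a \emph{strong} comparison principle (a Hopf-type boundary estimate for the difference), whereas Proposition \ref{princcomp} gives only weak comparison and does not provide the strict separation required to decrease $\lambda$.

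\textbf{$L^\infty$-bound.} As written, the comparison with $w$ is circular: applying Proposition \ref{princcomp} with $g\equiv 0$ requires $v_\infty^{q/m}\le C_0(1+w^{q/m})$ a.e., which you cannot assert before knowing $v_\infty\in L^\infty$. The paper simply invokes a Moser-type bound (\cite[Th.~2.2]{exi}) for subcritical right-hand sides.
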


\begin{proof}
For the existence of a solution, we minimize the associated energy functional $J$ defined for any $v\in\Wspz$ by:
$$J(v)=\frac{1}{p}\|v\|^p_\Wspz-\int_\Omega \frac{m}{q+m}\pow{v}{\frac{q}{m}+1}.$$
Since $p>\frac{q}{m}+1$, $J$ is well-defined,  coercive and {\it w.l.s.c.}. Therefore, there exists  a global minimizer $v_{\infty}$ of $J$ on $\Wspz$. $J$ is G\^ateaux differentiable on $\Wspz$ and hence $v_{\infty}$ is a critical point of $J$ and a solution of \eqref{15}.\\
Furthermore, using Proposition \ref{princcomp} with $g\equiv 0$, we obtain that $v_{\infty}$ is nonnegative. We also have $v_{\infty}\neq 0$. Indeed, let  $v\in\Wspz$, $v> 0$ and $t>0$ small enough, we have $J(v_{\infty})\leq J(tv)<0=J(0)$.\\
Using the same arguments as \cite[Th. 2.2]{exi}, we have $v_{\infty}\in L^\infty(\Omega)$ and by \cite[Th. 2.7]{reghold} we have $v_{\infty}\in C^s(\mathbb{R}^d)$ and then $v_{\infty}\leq Cd(\cdot, \partial\Omega)^s$ for some $C>0$. By the strong maximum principle (see \cite{delpezzo-quaas}), we obtain that $v_{\infty}>0$ in $\Omega$. \\
Then, \cite[Theorem 1.5]{delpezzo-quaas} gives $v_{\infty}\geq  cd(\cdot, \partial\Omega)^s$ for some $c>0$.\\
Finally, we show the uniqueness of the weak solution to \eqref{15}. \\
Let $u$, $v$ be two solutions to \eqref{15}. Then $u$, $v\in [cd(\cdot, \partial\Omega)^s,Cd(\cdot, \partial\Omega)^s]$  and  $u-\frac{v^p}{u^{p-1 }}$ and $v-\frac{u^p}{v^{p-1 }}$ belong to $L^\infty (\Omega)\cap \Wspz$. Hence
\begin{equation*}\label{16}
\langle\pfrac u, u-\frac{v^p}{u^{p-1 }}\rangle=\int_\Omega u^\frac{q}{m}(u-\frac{v^p}{u^{p-1 }})\,dx
\end{equation*}
and
\begin{equation*}\label{17}
\langle\pfrac v, v-\frac{u^p}{v^{p-1 }}\rangle=\int_\Omega v^\frac{q}{m}(v-\frac{u^p}{v^{p-1 }})\,dx.
\end{equation*}
Summing both previous equalities and since  $p>\frac{q}{m}+1$, we get:
\begin{equation*}\label{34}
\begin{split}
\langle\pfrac u, u-\frac{v^p}{u^{p-1 }}\rangle+\langle&\pfrac v, v-\frac{u^p}{v^{p-1 }}\rangle \\
&=\int_\Omega (u^p-v^p)(\frac{u^\frac{q}{m}}{u^{p-1}}-\frac{v^\frac{q}{m}}{v^{p-1}})\,dx\leq 0.
\end{split}
\end{equation*}
\cite[Lemma 1.8]{exi} and the above inequality imply that there exists $k$ such that $u=kv$ and since $p\neq \frac{q}{m}+1$, we have necessary $k=1$ and the proof follows.
\end{proof}
\subsubsection{Sub-supersolution method}


\begin{proof}[Proof of Theorem \ref{subhomo1}]

The existence of a $\infty$-weak-mild solution is given by Theorem \ref{subhomo2} (or Corollary \ref{subhomo}). It remains to obtain the the asymptotic behaviour.\\
For this, using monotone arguments, we proceed in several steps.\\
\underline{\textbf{Step 1:}} We first find $\underline{v}_0$ and $\overline{v}_0$ respectively subsolution and supersolution of \eqref{15} such that $v_0\in [\underline{v}_{0},\overline{v}_{0}]$.\\
As in the proof of Theorem \ref{subhomo2}, there exists a unique $\overline{v}_0 \in L^\infty(\Omega)\cap \Wspz$ solution of
\begin{equation*} 
    \begin{cases}
 \pfrac \overline{v}_0=K +|\overline{v}_0|^\frac{q}{m} & \text{in} \;\Omega,\\
      \overline{v}_0=0 & \text{in} \; \Rn\backslash \Omega,
    \end{cases}\
\end{equation*}
with $K$ large enough such that $v_0\leq \overline{v}_0\leq \kappa d(\cdot,\partial \Omega)^s$ for some $\kappa>0$.\\
We now define $\underline{v}_0=\lambda v_{\infty}$, where $v_{\infty}$ is the nontrivial nonnegative  solution of \eqref{15} 
then for $\lambda \leq 1$ small enough $\underline{v}_0\leq v_0$ and since $p>\frac{q}{m}+1$, $\underline{v}_0$ is a subsolution of \eqref{15}.\\
\underline{\textbf{Step 2:}} For any $T>0$, \eqref{pbq} admits a weak-mild solution $\overline v$ (resp. $\underline v$) on $Q_T$ such that $\overline v(0)=\overline v_0$ (resp. $\underline v(0)=\underline v_0$). Moreover $\overline v$ is nonincreasing (resp. $\underline v$ nondecreasing) in time and 
$\overline v(T)$ is a supersolution (resp. $\underline v(T)$ is a subsolution)  of \eqref{15}.\\
The proof are similar for both cases and we only consider the proof for $\overline v$.\\
The existence of $\overline v$ is given by Theorem \ref{subhomo2}. \\
Let $(\overline{v}^n)_n$ be the sequence defined as in Step 1 of the proof of Theorem \ref{thexi1} with $\overline v^0=\overline v_0$. Since $\overline{v}_0$ is a supersolution of \eqref{15}, we deduce from the definition of $\overline{v}^1$ that for any nonnegative $\phi\in \Wspz$:
$$ \int_\Omega \beta(\overline{v}^1)\phi\,dx+\dt\langle\pfrac\overline{v}^1,\phi\rangle \leq \int_\Omega \beta(\overline{v}_0)\phi\,dx+\dt\langle\pfrac\overline{v}_0,\phi\rangle.$$
Therefore,  by Proposition \ref{princcomp}, we get $0\leq \overline{v}^1\leq \overline{v}_0$ {\it a.e.} in $\Omega$. \\
By induction, we obtain, for any $n$, $0\leq\overline{v}^{n+1}\leq \overline{v}^n$ and hence we deduce that $\overline{v}_\dt$ and then $v$ are nonincreasing in time.\\
Furthermore, for any $n$, we have that $\overline{v}^{n}$ is a supersolution of \eqref{15}, that is, for any nonnegative $\phi\in \Wspz$:
\begin{equation*}
\begin{split}
\langle \pfrac \overline{v}^{n+1},\phi\rangle & \geq  \int_\Omega\frac{\beta(\overline{v}^{n+1})-\beta(\overline{v}^{n})}{\dt}\phi\,dx+\langle \pfrac \overline{v}^{n+1},\phi\rangle \\
&= \int_\Omega (\overline{v}^n)^\frac{q}{m}\phi\,dx \geq \int_\Omega (\overline{v}^{n+1})^\frac{q}{m}\phi\,dx.
\end{split}
\end{equation*}
Note that since $\overline{v}_\dt$ is uniformly bounded in $L^\infty(Q_T)$, we can get the convergence results similarly as in the proof of Theorem \ref{thexi1}. In particular $\overline{v}_\dt$ tends to $\overline{v} $ in $L^\infty(0,T;L^r(\Omega))$. Also using $\overline{v}^N$ bounded in $\Wspz$ and $\overline{v}_\dt(T)$ goes to $ \overline{v}(T) $ in $L^r(\Omega)$ we have, as in Step 4 of the proof of Theorem \ref{thexi1}, $\langle \pfrac \overline{v}^N, \phi \rangle$ tends to $\langle \pfrac \overline{v}(T) , \phi \rangle$ for any $\phi \in  \Wspz.$ Next, since 
$$ \langle\pfrac \overline{v}^N, \phi \rangle \geq \int_\Omega |\overline{v}^N|^\frac{q}{m} \phi, \; \forall \phi\in \Wspz \text{ nonnegative }$$
and passing to the limit as $N$ goes to $\infty$, we get that $\overline{v}(T)$ is a supersolution of \eqref{15}.
\\
\underline{\textbf{Step 3}}
Let $v_{glob}\in L^\infty(Q_\infty)$ be the solution of Theorem \ref{subhomo2} with initial data $ v_0$.\\
Using \textbf{Step 2} and the same construction as in the end of the proof of Theorem \ref{subhomo2}, we obtain $\underline{v}_{glob}$ and $\overline{v}_{glob}$ both $\infty$-weak-mild solutions of \eqref{pbq} with initial data $\underline v_0$ and $\overline v_0$, respectively.

Then, from Theorem \ref{comp}, we infer $v_{glob}\in [\underline{v}_{glob},\overline{v}_{glob}]$ and \textbf{Step 2} implies by an induction argument that $\underline{v}_{glob}$ and $\overline{v}_{glob}$ are respectively nondecreasing and nonincreasing in time and belong to $C([0,\infty],L^r(\Omega))$ for any $r\geq 1$.\\
Thus $\underline{v}_{glob},\overline{v}_{glob}$ tends to $\underline{v}_\infty,\overline{v}_\infty$ {\it a.e.} and in $L^r(\Omega)$ for any $r$ by dominated convergence theorem.\\
We now use semigroup theory arguments to show $\underline{v}_\infty=\overline{v}_\infty=v_{\infty}$.\\
We introduce the family $\{S(t);t\geq 0\}$ on $\{v\in \Wspz\cap L^\infty(\Omega)\ |\ 0\leq v\leq Cd(\cdot,\Omega)^s \mbox{ for some $C>0$}\}$ defined as $S(t)v_0=v_{glob}(t)$ where $v_{glob}$ is the solution of Theorem \ref{subhomo2}. From the construction of $v_{glob}$ at the end of the proof of Theorem \ref{subhomo2}, we have for $t\geq 0$ and $n\in \mathbb{N}$: $S(t+n)v_0=S(t) (S(n)v_0 )$.\\
Since $\overline{v}_{glob}\in C([0,\infty],L^1(\Omega))$,  we get
$$\overline{v}_\infty=\lim_{n\to \infty} S(t+n)v_0=S(t)(\lim_{n\to \infty} S(n)v_0)=S(t)\overline{v}_\infty$$
which means that $\overline v_\infty$ is a nonnegative stationary solution of \eqref{pbq} and by Theorem \ref{existab}, $\overline{v}_\infty=v_\infty$. Similarly, we obtain $\underline{v}_\infty=v_{\infty}$.
Since $v_{glob}\in [\underline{v}_{glob},\overline{v}_{glob}]$, we have $v$ tends to $v_{\infty}$ in $L^r(\Omega)$ for all $r$ as $t$ goes to $\infty$.
\end{proof}

\subsection{Asymptotic behaviour, energy method}\label{partGB}

In this section, we prove asymptotic behaviour in the case $p<\frac{q}{m}+1,$ we use some classical energy methods as {\it e.g.} in \cite{critical}.

\subsubsection{Extinction}

\begin{proof}[Proof of Theorem \ref{GB1}]
Let $v$ be a $\infty$-weak-mild solution of \eqref{pbq} then for any $T>0$, $v$ is a weak-mild solution of \eqref{pbq} on $Q_T$. The following computations are done for any $T>0$ for {\it a.e.} $t\in [0,T)$ which implies  that they hold for {\it a.e.} $t>0$.\\
Let $\rho\geq 1$, we define $\tilde{p}=p-1+\rho$, $\tilde{s}=\dfrac{sp}{\tilde{p}}$ and  




$$r=\min \{ \rho \geq 1 \; | \;  W^{\tilde{s},\tilde{p}}_0(\Omega) \hookrightarrow L^{\frac{1}{m}+\rho}(\Omega)\}.$$


From \ref{inegalg}, and since $(v(x)-v(y))(\pow{v(x)}{r}-\pow{v(y)}{r})\geq c|v(x)-v(y)|^{r+1}$ where $c$ depends on $p,s,d$, we get:
\begin{equation}\label{20}
\begin{split}
\langle \pfrac v,\pow{v}{r} \rangle
& \geq c \int_\Rn \int_\Rn \frac{|v(x)-v(y)|^{p-1+r}}{|x-y|^{d+sp}}\,dxdy=c\|v\|^{\tilde{p}}_{W^{\tilde{s},\tilde{p}}_0}.
\end{split}
\end{equation}
With the definition $r$, one has $W^{\tilde{s},\tilde{p}}_0(\Omega)\hookrightarrow L^{r+\frac{1}{m}}(\Omega)$. Hence, using \eqref{20}, equality \eqref{condex} yields for suitable constants and for {\it a.e } $t>0$:
$$\partial_t \|v\|^{r+\frac{1}{m}}_{L^{r+\frac{1}{m}}(\Omega)}+c\|v\|_{L^{r+\frac{1}{m}}(\Omega)}^{p-1+r}\leq \tilde{c}\|v\|^{r+\frac{q}{m}}_{L^{r+\frac{q}{m}}(\Omega)}\leq C\|v\|^{r+\frac{q}{m}}_{L^{r+\frac{1}{m}}(\Omega)}.$$
Defining the continuous function $M_r\,:\, [0,\infty)\ni t\mapsto\|v(t)\|_{{L^{r+\frac{1}{m}}(\Omega)}}^{r+\frac{1}{m}}$, we get equivalently for {\it a.e} $t>0$:
\begin{equation}\label{19}
M_r'(t) \leq M_r'(t)+\frac{c}{2} M^\alpha_r(t) \leq CM^\gamma_r(t)-\frac{c}{2} M^\alpha_r(t):=Q(M_r(t)) 
\end{equation}
where $\alpha :=\frac{m(p-1+r)}{rm+1}<\frac{rm+q}{rm+1}:=\gamma<1$ since $p<\frac{q}{m}+1$ and $q\leq 1$.\\
There exists $Q_0>0$ such that the mapping $Q$ is negative on $(0,Q_0)$. Hence choosing $M_r(0)=\|v_0\|_{L^{r+\frac{1}{m}}(\Omega)}<Q_0$ and  by continuity of $M_r$, we deduce from \eqref{19} that $M_r$ is nonincreasing on $[0,\varepsilon]$ for $\varepsilon$ small enough and $M_r(\varepsilon)<Q_0$. Hence, reiterating the argument, we obtain that $M_r$ is decreasing on $[0,+\infty)$ hence  for any $t>0$, we have $M_r(t)<Q_0$  and by \eqref{19}
$$M_r'(t)+\frac{c}{2} M_r(t)^\alpha \leq 0. $$
Assume that $M_r(t)>0$ for any $t>0$, then we get
$$\frac{M_r'(t)}{M^\alpha_r(t)}+\frac{c}{2} \leq 0 $$
from which we infer that $M_r(t)\leq (M^{1-\alpha}_r(0)-\dfrac{c}{2}(1-\alpha)t)^\frac{1}{1-\alpha}$ goes to $ -\infty$ as $t$ goes to $+\infty$. Thus we get a contradiction. Then, there exists $t_0>0$ such that $M_r(t_0)=0$ and using the monotony of $M_r$, we are done.
\end{proof}
\subsubsection{Blow-up}
Before going into the entire proof of Theorem \ref{GB3}, we first exhibit an example of function $v$ such that $E(v)\leq 0$.\\
Let $v$ be a nonnegative subsolution of \eqref{15} thus taking $v$ as test function, we have  $\| v\|^p_\Wspz \leq \| v\|^{\frac{q}{m}+1}_{L^{\frac{q}{m}+1}(\Omega)}$. Hence, using $p<\frac{q}{m}+1$ we get $E(\lambda v)\leq 0$ for $\lambda$ large enough.

\begin{proof}[Proof of Theorem \ref{GB3}]
Let $v$ is a weak-mild solution of \eqref{pbq} on $Q_T$ such that $v(0)=v_0\in L^\infty(\Omega)\cap\Wspz$. We first show that if $E(v_0)\leq 0$ then for any $t\in(0,T),$ $ E(v(t))\leq 0$.\\
Fixing $0<t<T$ and using \eqref{energyineq} with $h= \pow{v}{\frac{q}{m}}$, we obtain:
\begin{equation}\label{31}
\begin{split}
C\int_{0 }^{t}\int_\Omega \Big|\partial_t\pow{v}{\frac{m+1}{2m}}\Big|^2 \,dxd\tau+&\frac{1}{p}\|v(t)\|^p_\Wspz\\
&\leq \frac1p \|v_0\|^p_\Wspz+\int_{0 }^{t}\int_\Omega \pow{v}{\frac{q}{m}} \partial_t v\,dxd\tau.
   \end{split}
\end{equation}

As in Proposition \ref{estim}, we establish
$$\int_{0 }^{t}\int_\Omega \pow{v}{\frac{q}{m}} \partial_t v\,dxd\tau=\frac{m}{m+q}\left(\|v(t)\|^{\frac{q}{m}+1}_{L^{{\frac{q}{m}+1}}(\Omega)}-\|v_0\|^{\frac{q}{m}+1}_{L^{{\frac{q}{m}+1}}(\Omega)}\right).$$

Therefore, \eqref{31} gives:

\begin{equation*}
C\int_{0 }^{t}\int_\Omega \Big|\partial_t\pow{v}{\frac{m+1}{2m}}\Big|^2\,dxd\tau +E(v(t))\leq E(v_0)
\end{equation*}
and then $E(v(t))\leq E(v_0) \leq 0$.\\
Now for any $t\in[0,T)$, we define $Y(t):=\frac{1}{m+1}\|v(t)\|^{\frac{1}{m}+1}_{L^{{\frac{1}{m}+1}}(\Omega)}$. From \eqref{condex} with $r=1$, we deduce that:
\begin{equation*}
Y'(t)= \frac{1}{m+1}\partial_t\int_\Omega |v|^{\frac{1}{m}+1}\,dx=-\|v\|^p_\Wspz+\|v\|^{\frac{q}{m}+1}_{L^{\frac{q}{m}+1}(\Omega)},
\end{equation*}
and since $E(v(t))\leq 0$, we have 

\begin{equation}\label{30}
\begin{split}
Y'(t) 
&=-pE(v(t))+(1-\frac{pm}{q+m})\|v\|^{\frac{q}{m}+1}_{L^{\frac{q}{m}+1}(\Omega)}\geq C\|v\|^{\frac{q}{m}+1}_{L^{\frac{q}{m}+1}(\Omega)}\\
\end{split}
\end{equation}
where $C=1-\frac{pm}{q+m}>0$ since $p<\frac{q}{m}+1$.\\ 

\textbf{\underline{Case 1: $q> 1$:}}

In this case, we recall that $v$ is the unique $T_{max}$-weak-mild of \eqref{pbq}.\\
Assume that $T_{max}=+\infty$ hence \eqref{30} holds for any $t>0$ and by H\"older inequality, we get
%
$$Y'(t)\geq \tilde C Y(t)^\frac{m+q}{m+1}.$$
Setting $\nu =\frac{m+q}{m+1}>1$, we deduce
%
%
by straightforward integration that \break $Y^{1-\nu}(t)\leq  -\tilde C(\nu-1) t +Y^{1-\nu}(0)$.\\
Hence for $t$ large enough, we have $Y(t)<0$ which gives a contradiction.\\
Thus, $T_{max}<\infty$ and  Theorem \ref{GB2} implies 
$$\lim_{t\to T_{max}} \|v(t)\|_{L^{\infty}(\Omega)}=+\infty.$$

For the case $q\leq 1$, {according to assertion (i) of Theorem \ref{thgen}, there exists $v$  a $\infty$-weak-mild solution of \eqref{pbq} with initial data $v_0$}. As above, inequality \eqref{30} holds for any $t>0$.\\
\textbf{\underline{Case 2: $q= 1$:}}

Inequality \eqref{30} becomes $Y'(t) \geq CY(t)$ and by integrating we deduce that $Y(t)$ goes to $ \infty $ as $t$ goes to $\infty$. 
\\

\textbf{\underline{Case 3: $q< 1$:}}

Fixing arbitrary $T>0$, $v$ is a weak-mild solution on $Q_T$ and satisfies for any $t\in [0,T]$:

$$\|v(t)\|^{\frac{1}{m}+1}_{L^{\frac{1}{m}+1}(\Omega)}\leq \|v(t)\|^{\frac{q}{m}+1}_{L^{\frac{q}{m}+1}(\Omega)}\|v\|^{\frac{1-q}{m}}_{L^\infty(Q_T)}.$$
Therefore using \eqref{30} we obtain:

$$Y'(t) \geq C\|v\|^{\frac{q-1}{m}}_{L^\infty(Q_T)}Y(t).$$

By integrating the above expression between $t=0$ and $t=T$, we have

$$Z^{m+1}(T):=\|v\|_{L^\infty(Q_T)}^{\frac{1}{m}+1}\geq Y(T)\geq  Y(0)\exp (CTZ^{q-1}(T)).$$

 The previous inequality can be rewritten as
$$ \exp\left( (m+1)Z^{1-q}(T)\ln Z(t)-Z^{1-q}(T)\ln Y(0)\right)\geq \exp(Ct).$$
Hence, we have 
$$  (m+1)Z^{1-q}(T)\ln Z(t)-Z^{1-q}(T)\ln Y(0)\geq CT$$
which gives, as $T$ goes to $\infty$, $Z(T)=\|v\|_{L^\infty(Q_T)}^\frac{1}{m}$ goes to $\infty$. 
\end{proof}

\section*{Acknowledgement}
The authors would like to thank Carlota M. Cuesta Romero for our discussions about the problem.

\appendix
\section{Properties of the operator \texorpdfstring{$A$}{A}}\label{accsec}

\begin{definition}[Accretive operator]\citep[Def 3.1, p.97]{barbu2}
A subset $\mathcal O$ of $X\times X$ (equivalently, a multivalued operator from $X$ to $X$) is called  accretive if for every pair $[x_1,y_1], [x_2,y_2] \in \mathcal O$, there is $w\in J(x_1-x_2)$ such that
$$\langle y_1-y_2,w \rangle_{X\times X^*}\geq 0,$$ 

where $J:X\to X^*$ is the the duality mapping of the space $X$ defined as in \cite[Chapter 1, p.1]{barbu2}.
The accretiveness of $\mathcal O$ is a metric geometric property
that can be equivalently expressed as 

$$\|x_1-x_2\|_X\leq \|x_1-x_2+\lambda(y_1-y_2)\|_X, \; \forall \lambda>0,\; [x_i,y_i]\in \mathcal O,\; i=1,2.$$

\end{definition}

For $X=L^1(\Omega)$ we have $J:L^1(\Omega)\to L^\infty(\Omega):$

$$J(u)=\{w \|u\|_{L^1(\Omega)}\ | \ w(x) \in sgn(u(x)) \; a.e. \},$$

where $sign$ is the multivalued sign function, for a proof see \cite[Corollary 2.7, p.59]{barbu2}.

\begin{theorem}\label{accretif}
The operator $A$ is accretive in $L^1(\Omega)$.
\end{theorem}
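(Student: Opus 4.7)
The strategy is to verify the equivalent metric form of accretivity recalled in the preamble: for every $u_1,u_2\in D(A)$ and every $\lambda>0$,
\[
\|u_1-u_2\|_{L^1(\Omega)}\;\le\;\|(u_1-u_2)+\lambda(Au_1-Au_2)\|_{L^1(\Omega)}.
\]
Set $v_i:=\pow{u_i}{m}\in\Wspz$, so that $u_i=\beta(v_i)$ and $f_i:=\pfrac v_i\in L^1(\Omega)$ by definition of $D(A)$. Writing $\phi_i:=u_i+\lambda A u_i=\beta(v_i)+\lambda f_i\in L^1(\Omega)$, the task is to control $\|\beta(v_1)-\beta(v_2)\|_{L^1}$ by $\|\phi_1-\phi_2\|_{L^1}$.

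I would test the identity $\beta(v_1)-\beta(v_2)+\lambda(f_1-f_2)=\phi_1-\phi_2$ against a smooth, odd, nondecreasing approximation $p_\epsilon$ of the sign function satisfying $p_\epsilon(0)=0$, $|p_\epsilon|\le 1$, and $p_\epsilon(s)\to\mathrm{sgn}_0(s)$ pointwise as $\epsilon\to 0^+$ (for instance $p_\epsilon(s)=s/(|s|+\epsilon)$). Since $p_\epsilon$ is Lipschitz with $p_\epsilon(0)=0$, the function $\Phi_\epsilon:=p_\epsilon(v_1-v_2)$ lies in $\Wspz\cap L^\infty(\Omega)$ and is therefore an admissible test against each $f_i$. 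This gives
\[
\int_\Omega(\beta(v_1)-\beta(v_2))\Phi_\epsilon\,dx+\lambda\,\mathcal I_\epsilon=\int_\Omega(\phi_1-\phi_2)\Phi_\epsilon\,dx,
\]
where
\[
\mathcal I_\epsilon:=\int_\Rn\!\int_\Rn\frac{\bigl(\pow{v_1(x)-v_1(y)}{p-1}-\pow{v_2(x)-v_2(y)}{p-1}\bigr)\bigl(\Phi_\epsilon(x)-\Phi_\epsilon(y)\bigr)}{|x-y|^{d+sp}}\,dx\,dy.
\]

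The crucial observation is a pointwise monotonicity: since $a\mapsto\pow{a}{p-1}$ is nondecreasing on $\R$, the first factor in the numerator of $\mathcal I_\epsilon$ shares the sign of $(v_1-v_2)(x)-(v_1-v_2)(y)$; since $p_\epsilon$ is nondecreasing, $\Phi_\epsilon(x)-\Phi_\epsilon(y)$ has the same sign. Hence the integrand is nonnegative pointwise, and $\mathcal I_\epsilon\ge 0$. Combining with the fact that $\beta$ is strictly increasing, so $(\beta(v_1)-\beta(v_2))\,p_\epsilon(v_1-v_2)\ge 0$ a.e., we get
\[
\int_\Omega(\beta(v_1)-\beta(v_2))\,p_\epsilon(v_1-v_2)\,dx\le\int_\Omega(\phi_1-\phi_2)\,\Phi_\epsilon\,dx.
\]
Dominated convergence as $\epsilon\to 0^+$ (with $|\Phi_\epsilon|\le 1$ and the integrands bounded by $|u_1-u_2|,|\phi_1-\phi_2|\in L^1$) then yields
\[
\|u_1-u_2\|_{L^1(\Omega)}=\int_\Omega|\beta(v_1)-\beta(v_2)|\,dx\le\int_\Omega(\phi_1-\phi_2)\,\mathrm{sgn}_0(v_1-v_2)\,dx\le\|\phi_1-\phi_2\|_{L^1(\Omega)},
\]
which is the desired inequality, proving accretivity.

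The main technical obstacle is the rigorous justification that the $L^1$-pairing $\int_\Omega f_i\Phi_\epsilon\,dx$ coincides with the double-integral pairing $\langle\pfrac v_i,\Phi_\epsilon\rangle$, since $\Phi_\epsilon$ only lies in $\Wspz\cap L^\infty$ rather than in $C_c^\infty$. I would handle this by approximating $\Phi_\epsilon$ in $\Wspz$ by functions $\chi_n\in C_c^\infty(\Omega)$ that remain uniformly bounded by $\|\Phi_\epsilon\|_{L^\infty}\le 1$ (convolution followed by truncation), then passing to the limit $n\to\infty$: the double-integral pairing with $\pfrac v_i\in L^{p'}(\mathbb R^{2d},|x-y|^{-d-sp}dxdy)$ converges by Hölder, while $\int_\Omega f_i\chi_n\,dx\to\int_\Omega f_i\Phi_\epsilon\,dx$ by dominated convergence. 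Once this identification is in place, the monotonicity step $\mathcal I_\epsilon\ge 0$ and the limit $\epsilon\to 0^+$ are routine.
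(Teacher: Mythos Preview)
Your proof is correct and follows essentially the same route as the paper: test against a smooth nondecreasing approximation $p_\epsilon$ of the sign function applied to $v_1-v_2=\pow{u_1}{m}-\pow{u_2}{m}$, use the monotonicity of $a\mapsto\pow{a}{p-1}$ together with that of $p_\epsilon$ to get the nonlocal pairing $\mathcal I_\epsilon\ge 0$, and pass to the limit $\epsilon\to 0$ by dominated convergence. The only cosmetic difference is that the paper verifies the duality-mapping form of accretivity (i.e.\ $\int_\Omega(Au_1-Au_2)\,\mathrm{sgn}(u_1-u_2)\,dx\ge 0$), whereas you deduce the equivalent metric contraction inequality directly; you also make explicit the identification of the $L^1$-pairing with the double-integral pairing, which the paper leaves implicit.
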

\begin{proof} 
The proof follows from arguments in \cite[Th. 3.5, p.117]{barbu2}. Precisely, we define $\gamma_\epsilon$ the approximation of  $sgn(\cdot)$ defined by:
\begin{align*}
\gamma_\epsilon(x)= \left\{ \begin{array}{ll} \frac{x}{|x|} &\text{ if } |x|>\epsilon, \\
\Psi_\epsilon(x) &\text{ if } |x|\leq \epsilon
\end{array} \right.
\end{align*}
where $\Psi_\epsilon\in C^1([-\epsilon,\epsilon])$, $\Psi(0)=0 , \Psi_\epsilon(\epsilon)=1$ $\Psi_\epsilon(-\epsilon)=-1$ and $\Psi_\epsilon'\geq 0$.\\
Let $u$, $v\in D(A)$. We have $\gamma_\epsilon(\pow{u}{m}-\pow{v}{m}) \in L^\infty(\Omega)\cap \Wspz)$. Indeed $\gamma_\epsilon$ is Lipschitz for any $\epsilon>0$ and
$$|\gamma_\epsilon(\pow{u}{m}-\pow{v}{m})(x)-\gamma_\epsilon(\pow{u}{m}-\pow{v}{m})(y)|\leq C |(\pow{u}{m}-\pow{v}{m})(x)-(\pow{u}{m}-\pow{v}{m})(y)|.$$
Then, $\gamma_\epsilon(\pow{u}{m}-\pow{v}{m})\in \Wspz$. Also 
$$\pow{u(x)}{m}-\pow{u(y)}{m}\geq \pow{v(x)}{m}-\pow{v(y)}{m} \implies \pow{u(x)}{m}-\pow{v(x)}{m}\geq \pow{u(y)}{m}-\pow{v(y)}{m}$$
and since $\gamma_\epsilon$ is nondecreasing,
\begin{align}\label{1}
 \begin{array}{lr}  
 \bigg(\pow{\pow{u(x)}{m}-\pow{u(y)}{m}}{p-1}-\pow{\pow{v(x)}{m}-\pow{v(y)}{m}}{p-1}\bigg) \times \\
\big(\gamma_\epsilon(\pow{u(x)}{m}-\pow{v(x)}{m})-\gamma_\epsilon(\pow{u(y)}{m}-\pow{v(y)}{m})\big) \geq 0.\end{array}
\end{align}
From \eqref{1} it follows that:
\begin{equation*}
\langle\pfrac \pow{u}{m}-\pfrac \pow{v}{m},\gamma_\epsilon(\pow{u}{m}-\pow{v}{m})\rangle\geq 0.
\end{equation*}
As in \cite{barbu2} when $\epsilon$ goes to $0$, we have $\gamma_\epsilon(\pow{u}{m}-\pow{v}{m})$ tends to $sgn(\pow{u}{m}-\pow{v}{m})=sgn(u-v)$ {\it a.e.} in $\Omega$. Since $u$, $v$ belong to $D(A)$ and from dominated convergence Theorem, we obtain:
$$\int_\Omega (\pfrac \pow{u}{m}-\pfrac \pow{v}{m})sgn(u-v)\,dx\geq 0$$
from which we deduce that the operator is accretive in $L^1(\Omega)$.
\end{proof}

\begin{proof}[Proof of Proposition \ref{propdom}]

We set $v_\epsilon\in L^1(\Omega)$ the solution of:

\begin{equation*}
    \begin{cases}
      v_\epsilon+\epsilon  A v_\epsilon=\beta(v_0) & \text{in} \;\Omega,\\
      v_\epsilon=0 & \text{in} \; \Rn\backslash \Omega.
    \end{cases}\
\end{equation*}

Indeed from Theorem \ref{exielli}, we have a solution $v_\epsilon\in D(A)$. Taking the test function $\pow{v_\epsilon}{m}-v_0\in \Wspz\cap L^\infty(\Omega)$ we have:

\begin{equation}\label{32}
\int_\Omega(v_\epsilon-\beta(v_0))(\pow{v_\epsilon}{m}-v_0)+\epsilon \langle A v_\epsilon, \pow{v_\epsilon}{m}-v_0\rangle =0.
\end{equation}

With the convexity of $v\mapsto \frac{1}{p}\|v\|_\Wspz^p$, we obtain:
\begin{equation}\label{33}
\langle A v_\epsilon, \pow{v_\epsilon}{m}-v_0\rangle \geq \frac{1}{p}\|\pow{v_\epsilon}{m}\|_\Wspz^p-\frac{1}{p} \|v_0\|_\Wspz^p.
\end{equation}
Combining \eqref{32} and \eqref{33} we have
$$\int_\Omega(v_\epsilon-\beta(v_0))(\pow{v_\epsilon}{m}-v_0)\leq\frac{ \epsilon}{p} \|v_0\|_\Wspz^p $$
and using \eqref{ineg2} and since $m>1$ we obtain:
$$\|v_\epsilon-\beta(v_0)\|^{m+1}_{L^{m+1}(\Omega)}\leq C\epsilon,$$

and then $v_\epsilon$ tends to $\beta(v_0)$ in $L^1(\Omega)$ as $\epsilon$ goes to $0$.
\end{proof}
\section{Reminders}

We recall some notion and results whom we use in this paper.

\begin{definition}[
$\epsilon$-approximate solution]\label{defapp}
Let $f\in L^1(Q_T), \epsilon>0$ and  $(t_i)_{i\in \{1,...,N\}}$ be a partition of $[0,T]$ such that for any $i$, $t_i-t_{i-1}<\epsilon$.\\
An $\epsilon$-approximate solution of \eqref{pbo} is a piecewise constant function $U:[0,T]\to L^1(\Omega)$ defined by $U(t)=U_i=U(t_{i})$ on $[t_i,t_{i+1})$ such that:
$$\|U(0)-u_0\|_{L^1(\Omega)}\leq\epsilon$$
and
$$\frac{U_i-U_{i-1}}{t_i-t_{i-1}}+\pfrac (|U_i|^{m-1} U_i)=f_i \quad \mbox{on } [t_i,t_{i+1}),$$
where $(f_i)_{i\in \{1,...,N\}}$ satisfies $ \displaystyle\sum_{i=1}^N\int_{t_{i-1}}^{t_i}\|f(\tau)-f_i\|_{L^1(\Omega)}d\tau <\epsilon$.

\end{definition}

\begin{lemma}[Gronwall lemma]\cite[Lemma 4.2.1, p. 55]{c&h}\label{gronw}
Let $T>0, \lambda\in L^1(0,T), \lambda \geq 0$ {\it a.e.} and $C_1,C_2\geq 0.$ Let $\phi \in L^1(0,T), \phi \geq 0$ {\it a.e.}, be such that $\lambda \phi\in L^1(0,T)$ and

$$\phi(t) \leq C_1+C_2\int_{0}^t\lambda(\tau)\phi(\tau)d\tau\  \mbox{ for {\it a.e.} $t\in(0,T)$}.$$
Then we have:
$$\phi(t)\leq C_1 \exp(C_2\int_{0}^t\lambda(\tau)d\tau)\ \mbox{ for {\it a.e.} $t\in(0,T)$}.$$
\end{lemma}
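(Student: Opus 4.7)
The plan is to prove the integral Gronwall inequality by the classical integrating-factor argument applied to the majorant. I define
\[
\Phi(t) := C_1 + C_2\int_0^t \lambda(\tau)\phi(\tau)\,d\tau, \qquad t\in[0,T].
\]
Since $\lambda\phi\in L^1(0,T)$, the function $\Phi$ is absolutely continuous on $[0,T]$, with $\Phi(0)=C_1$ and $\Phi'(t) = C_2\lambda(t)\phi(t)$ for a.e.\ $t\in(0,T)$. The hypothesis reads $\phi(t)\leq \Phi(t)$ a.e., and since $\lambda\geq 0$ a.e.\ and $C_2\geq 0$, this yields the differential inequality
\[
\Phi'(t) = C_2\lambda(t)\phi(t) \leq C_2\lambda(t)\Phi(t) \qquad \text{for a.e. } t\in(0,T).
\]

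Next, I introduce the integrating factor $\mu(t) := \exp\!\big(-C_2\int_0^t \lambda(\tau)\,d\tau\big)$, which is absolutely continuous on $[0,T]$ with $\mu'(t) = -C_2\lambda(t)\mu(t)$ a.e. Multiplying the previous inequality by $\mu(t)>0$ and rearranging gives, for a.e. $t$,
\[
\big(\mu\Phi\big)'(t) = \mu(t)\Phi'(t) + \mu'(t)\Phi(t) \leq C_2\lambda(t)\mu(t)\Phi(t) - C_2\lambda(t)\mu(t)\Phi(t) = 0.
\]
Since $\mu\Phi$ is absolutely continuous, integrating this a.e.\ inequality on $[0,t]$ yields $\mu(t)\Phi(t)\leq \mu(0)\Phi(0) = C_1$, that is,
\[
\Phi(t)\leq C_1\exp\!\Big(C_2\int_0^t\lambda(\tau)\,d\tau\Big), \qquad t\in[0,T].
\]
Combining this with $\phi(t)\leq \Phi(t)$ for a.e.\ $t$ gives the conclusion.

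The only subtle point, and the one I would make sure to state carefully, is the justification that $\Phi$ (and $\mu\Phi$) is absolutely continuous so that the pointwise-a.e.\ differential inequality can be integrated into the asserted bound; this relies on $\lambda\phi\in L^1(0,T)$ and $\lambda\in L^1(0,T)$ guaranteeing absolute continuity of the corresponding Lebesgue integrals. No other difficulty arises; the hypotheses $C_1,C_2\geq 0$ and $\lambda\geq 0$ are exactly what is needed to keep $\mu>0$ and the sign of the differential inequality right.
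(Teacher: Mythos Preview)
Your proof is correct; it is the standard integrating-factor argument for the integral form of Gronwall's lemma. Note that the paper does not supply its own proof of this statement but merely cites it from \cite[Lemma 4.2.1, p.~55]{c&h}, so there is no alternative approach in the paper to compare against.
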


We have the following integration by part formula where we identify $L^2(\Omega)$ with its dual (by Riesz representation Theorem), a more general version can be found in \cite[Lemma 7.3, p.191]{roubi}.

\begin{lemma}\label{ipp}
Let $r\in (1,\infty)$. Let $V$ be a Banach space such that $V\overset{dense}{\hookrightarrow}L^2(\Omega) \hookrightarrow V^*$. Then, $W^{1,r,r'}(0,T ;V,V^*) \hookrightarrow C(0,T,L^2(\Omega))$. And for any $u,v\in W^{1,r,r'}(0,T ;V,V^*)$ and for any $t\in[0,T]$:

\begin{equation*}
\bigg[\int_\Omega uv\,dx\bigg]^t_0=\int_0^t \int_\Omega u\partial _t v+v\partial _t u \,dxd\tau,
\end{equation*}

where $[a(\tau)]^t_0=a(t)-a(0)$.

\end{lemma}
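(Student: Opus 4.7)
The plan is to derive both claims from a density argument combined with the classical chain rule for smooth Bochner functions. First I would show that $C^\infty([0,T];V)$ (or equivalently restrictions to $[0,T]$ of $C^\infty_c(\mathbb{R};V)$) is dense in $\mathcal{W} := W^{1,r,r'}(0,T;V,V^*)$ endowed with its natural norm $\|u\|_{L^r(0,T;V)} + \|u'\|_{L^{r'}(0,T;V^*)}$. The standard route is to extend $u \in \mathcal{W}$ by reflection (or a Stein-type extension) to a slightly larger interval $(-\delta, T+\delta)$, then mollify by convolution with a smooth compactly supported kernel $\rho_\varepsilon$. Using the Gelfand triple $V \hookrightarrow L^2(\Omega) \hookrightarrow V^*$, both $u$ and $u'$ make sense as $V^*$-valued distributions, and standard properties of convolution in Bochner spaces give $\rho_\varepsilon \ast u \to u$ in $L^r(0,T;V)$ and $(\rho_\varepsilon \ast u)' = \rho_\varepsilon \ast u' \to u'$ in $L^{r'}(0,T;V^*)$.

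Next, for smooth $u, v \in C^1([0,T];V) \subset C^1([0,T];L^2(\Omega))$, the classical product rule gives
\begin{equation*}
\frac{d}{dt}\int_\Omega u(t) v(t)\,dx = \int_\Omega u'(t)\, v(t)\,dx + \int_\Omega u(t)\, v'(t)\,dx,
\end{equation*}
and since $u(t), v(t) \in V$ while $u'(t), v'(t) \in V^*$, the $L^2$-inner products coincide with the duality pairings $\langle u'(t), v(t)\rangle_{V^*,V}$ and $\langle v'(t), u(t)\rangle_{V^*,V}$. Integrating from $0$ to $t$ yields the desired identity for smooth functions. Taking $u = v$ in this identity and applying Hölder's inequality gives
\begin{equation*}
\bigl|\|u(t)\|_{L^2}^2 - \|u(s)\|_{L^2}^2\bigr| \le 2\|u\|_{L^r(0,T;V)}\,\|u'\|_{L^{r'}(0,T;V^*)},
\end{equation*}
so that $t \mapsto \|u(t)\|_{L^2}^2$ is uniformly continuous on $[0,T]$; combined with the a.e.\ bound $\|u(s)\|_{L^2}^2 \le C\|u\|_{L^r(V)}\|u'\|_{L^{r'}(V^*)} + T^{-1}\|u\|_{L^1(0,T;L^2)}^2$ at a point $s$ where the value is controlled, one gets the uniform estimate $\|u\|_{C([0,T];L^2)} \le C\|u\|_{\mathcal{W}}$.

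Finally, to extend both the continuity claim and the integration by parts formula to a general $u \in \mathcal{W}$, I would take a sequence $u_n \in C^\infty([0,T];V)$ with $u_n \to u$ in $\mathcal{W}$ given by the density step. The previous uniform estimate then shows $(u_n)$ is Cauchy in $C([0,T];L^2(\Omega))$, so it converges to some $\tilde u \in C([0,T];L^2)$ which must coincide a.e.\ with $u$; this gives the continuous embedding. Passing to the limit in the IBP identity written for $u_n, v_n$ (where each term is controlled by the $\mathcal{W}$-norm via Hölder) yields the formula for arbitrary $u, v \in \mathcal{W}$.

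The main obstacle is the density step: one must extend $u$ outside $[0,T]$ in a way that respects both the $L^r(V)$-integrability of $u$ and the $L^{r'}(V^*)$-integrability of the distributional derivative $u'$, which are in different spaces. The cleanest way is to first handle $u$ whose support is compact in $(0,T)$ (where pure mollification is automatic), then use a partition of unity near the endpoints combined with a reflection extension that is compatible with weak differentiation in $V^*$; alternatively, one argues via the Lions-Magenes approach by defining $u(0)$ and $u(T)$ a priori as weak limits in $V^*$, then showing these coincide with the $L^2$-traces obtained by the a.e.\ embedding.
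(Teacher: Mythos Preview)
The paper does not actually prove this lemma: it is stated in the appendix as a reminder, with the sentence ``a more general version can be found in \cite[Lemma 7.3, p.191]{roubi}'' serving in lieu of a proof. Your proposal is the standard textbook argument (density of $C^\infty([0,T];V)$ via extension and mollification, the elementary chain rule for smooth functions, a uniform $C([0,T];L^2)$ bound, and passage to the limit), which is precisely the route taken in the cited reference, so there is nothing to compare against here and your approach is correct.
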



We end this section by some algebraic inequalities:

\begin{propriete}\cite{simon,kato}\label{inegalg}
There exist $c_i$, $i\in \{1,\cdots,4\}$ positive constants such that for any $\xi , \eta \in \mathbb{R}^d$:
\begin{align}\label{ineg1}\tag{\ref{inegalg}.1}
||\xi|^{p-2}\xi -|\eta|^{p-2}\eta|\leq c_1 \left\{ \begin{array}{lr}  |\xi -\eta |(|\xi|+|\eta|)^{p-2} & \text{if } p\geq 2, \\
|\xi-\eta|^{p-1} & \text{if } p \leq 2,
\end{array} \right.
\end{align}

\begin{align}\label{ineg2}\tag{\ref{inegalg}.2}
(|\xi|^{p-2}\xi -|\eta|^{p-2}\eta).(\xi-\eta) \geq c_2 \left\{ \begin{array}{lr}  |\xi -\eta |^{p} & \text{if } p\geq 2, \\
\dfrac{|\xi-\eta|^{2}}{(|\xi|+|\eta|)^{2-p}} & \text{if } p \leq 2
\end{array} \right.
\end{align}
and for any $p>1$:
\begin{equation}\label{ineg3}\tag{\ref{inegalg}.3}
c_3|\xi -\eta |(|\xi|+|\eta|)^{p-2} \leq |\pow{\xi}{p-1} -\pow{\eta}{p-1}|  \leq c_4|\xi -\eta |(|\xi|+|\eta|)^{p-2}.
\end{equation}

\end{propriete}

\bibliographystyle{plain}
\bibliography{biblio}

\end{document}